\documentclass[11pt]{article}

\usepackage[margin=1in]{geometry}
\usepackage{amsmath,amssymb,amsfonts,amsthm,mathtools}
\usepackage[mathscr]{euscript}
\usepackage{enumitem}
\usepackage{hyperref}
\usepackage{microtype}
\usepackage{authblk}
\hypersetup{colorlinks=true,linkcolor=blue,citecolor=blue,urlcolor=blue}

\numberwithin{equation}{section}
\setlist{nosep}

\theoremstyle{plain}
\newtheorem{theorem}{Theorem}[section]
\newtheorem{lemma}[theorem]{Lemma}

\newtheorem{corollary}[theorem]{Corollary}
\newtheorem{conjecture}[theorem]{Conjecture}
\newtheorem{problem}[theorem]{Problem}

\theoremstyle{definition}
\newtheorem{definition}[theorem]{Definition}
\newtheorem{example}[theorem]{Example}

\theoremstyle{remark}
\newtheorem{remark}[theorem]{\textbf{Remark}}

\newcommand{\A}{\mathcal A}
\newcommand{\B}{\mathcal B}
\newcommand{\C}{\mathcal C}
\newcommand{\D}{\mathcal D}
\newcommand{\E}{\mathcal E}
\newcommand{\F}{\mathcal F}
\newcommand{\G}{\mathcal G}
\newcommand{\Hcal}{\mathcal H}
\newcommand{\I}{\mathcal I}
\newcommand{\Kcal}{\mathcal K}
\newcommand{\Pcal}{\mathcal P}
\newcommand{\R}{\mathcal R}
\newcommand{\T}{\mathcal T}
\newcommand{\W}{\mathcal W}
\newcommand{\Om}{\Omega}
\newcommand{\meetall}{\bigwedge}
\newcommand{\sbinom}[2]{\genfrac{[}{]}{0pt}{}{#1}{#2}}
\newcommand{\sbinomsep}[3]{\genfrac{[}{]}{0pt}{}{#1}{#2}_{#3}^{\mathrm{sep}}}
\newcommand{\eps}{\varepsilon}
\newcommand{\kk}{\kappa}
\DeclareMathOperator{\rk}{rk}

\title{Erd\H{o}s--Ko--Rado and Hilton--Milner Theorems  in the \\Partition Lattice}

\author[1]{Mengyu Cao\thanks{E-mail: \texttt{myucao@ruc.edu.cn}. Supported by the National Natural Science Foundation of China (12301431) and Beijing Natural Science Foundation (1262010).}}
\author[2]{Jiaqi Liao\thanks{E-mail: \texttt{liaojq19@ustb.edu.cn}.}}
\author[3]{Haixiang Zhang\thanks{E-mail: \texttt{zhang-hx22@mails.tsinghua.edu.cn}.}}

\affil[1]{\small Institute for Mathematical Sciences, Renmin University of China, Beijing 100086, China}
\affil[2]{\small School of Mathematics and Physics, University of Science and Technology Beijing, Beijing 100083, China}
\affil[3]{\small Department of Mathematical Sciences, Tsinghua University, Beijing 100084, China}

\date{}

\begin{document}
\maketitle

\begin{abstract}
Let $M_n=M(K_{n+1})$ be the graphic matroid of the complete graph, and let
$\F_k(M_n)$ be its rank-$k$ flats.  We study families
$\A\subseteq\F_k(M_n)$ satisfying $\rk(A\wedge B)\ge t$ for all
$A,B\in\A$.  For $t=1$, this problem is exactly equivalent to Czabarka's
partition-EKR conjecture, first introduced in print by P.~L. Erd\H{o}s and
L.~A. Sz\'ekely~\cite{ErdosSzekelyHigher}.  We prove the corresponding
Erd\H{o}s--Ko--Rado theorem in the explicit linear range $n+1\ge8k$, giving
a constant-factor advance toward
the conjectured sharp range $n\ge2k$.  For every fixed $t$, we further prove
an Erd\H{o}s--Ko--Rado theorem under an explicit condition of order
$O_t(k^2)$ on the block number $n+1-k$, with equality only for a full
$t$-star.  We also determine the largest nontrivial intersecting families
under an explicit $O(k^6)$ threshold and characterize the unique extremal
family up to isomorphism.
\end{abstract}

\medskip
\noindent\textbf{Keywords.}
Erd\H{o}s--Ko--Rado theorem, Hilton--Milner theorem, matroid flats,
partition lattice, spread approximation.

\medskip
\noindent\textbf{2020 MSC Classification.}
05D05, 05B35, 05A18.

\section{Introduction}

The Erd\H{o}s--Ko--Rado theorem is a starting point of extremal set theory.
It states that, when $n\ge 2k$, an intersecting family of $k$-subsets of an
$n$-element set has size at most $\binom{n-1}{k-1}$; for $n>2k$, equality is
attained only by a star~\cite{EKR}.  Hilton and Milner subsequently determined
the largest intersecting $k$-uniform families having no common element
\cite{HiltonMilner}.  The corresponding $t$-intersection problem culminated in
the complete intersection theorem of Ahlswede and Khachatrian
\cite{AhlswedeKhachatrian}.  These results have motivated analogues for vector
spaces, permutations, matchings, partitions, and many other homogeneous
combinatorial structures.

Matroids, introduced by Whitney~\cite{Whitney}, provide a natural setting in
which independence, rank, closure, and intersection can be studied without a
specific linear or graphical representation; see~\cite{Oxley} for background.
The flats of a simple matroid form a geometric lattice.  For the graphic
matroid of a complete graph this lattice is precisely a partition lattice: a
flat is a disjoint union of complete graphs, or equivalently a set partition of
the vertex set.  This classical correspondence makes $M(K_{n+1})$ a
particularly natural test case for intersection theorems on matroid flats.

Let $S(N,b)$ denote the classical Stirling number of the second kind,
the number of partitions of an $N$-element set into $b$ nonempty blocks.
To avoid repeatedly translating between lattice rank and block number, we
use the binomial-style rank indexing
\[
 \sbinom{x}{y}:=S(x+1,x+1-y)
 \qquad(0\le y\le x).
\]
Thus the \emph{Stirling binomial} $\sbinom{x}{y}$ is not a new array: it is
exactly the classical Stirling number $S(x+1,x+1-y)$, indexed by the rank
$y$ of a partition on $x+1$ points.

Czabarka's partition-EKR conjecture was first introduced in print by
P.~L. Erd\H{o}s and L.~A. Sz\'ekely~\cite{ErdosSzekelyHigher}; their
acknowledgment credits \'Eva Czabarka with the conjecture.  Thus they first
presented the problem in the literature, while Czabarka was its originator.
In their paper, two partitions are called
\emph{intersecting in a pair} if some two ground-set elements lie in one
block of each partition.  Conjecture~1 there asserts that, if a $v$-set is
partitioned into $\ell$ blocks and $v\le2\ell-1$, then a pairwise
pair-intersecting family has size at most
$\sbinom{v-2}{v-1-\ell}$, attained by fixing a pair in one block.  In the
terminology now standard for partition problems, this is a partially
$2$-intersecting problem.  Meagher, Shirazi, and Stevens proved the canonical
extremal result for uniform profiles when the number of blocks is sufficiently
large~\cite{MeagherShiraziStevens}.  Kupavskii subsequently proved the
corresponding eventual theorem for every fixed profile, uniformly once its
largest block size is prescribed~\cite[Theorem~9]{KupavskiiPartitions}.

For fixed excess $d=v-\ell$, only finitely many nonsingleton profile shapes
occur and every block has size at most $d+1$.  Summing Kupavskii's fixed-profile
conclusion over these shapes shows that the conjectured inequality holds for
every fixed $d$ once $\ell$ is sufficiently large.  This is an eventual
consequence rather than a linear threshold: Czabarka's conjecture remains open
for arbitrary admissible pairs $(v,\ell)$ in the full proposed range
$v\le2\ell-1$, and the profile theorem supplies no threshold linear in $d$.

The relation with our $t=1$ problem is exact, rather than merely analogous.
Set
\[
 v=n+1,\qquad \ell=n+1-k.
\]
Then rank-$k$ flats of $M(K_{n+1})$ are precisely the $\ell$-block
partitions of a $v$-set, pair-intersection is the condition
$\rk(A\wedge B)\ge1$, and
\[
 v\le2\ell-1\quad\Longleftrightarrow\quad n\ge2k,
 \qquad
 \sbinom{v-2}{v-1-\ell}=\sbinom{n-1}{k-1}.
\]
The observation immediately following the conjecture, concerning
$v=2\ell$, likewise translates into the obstruction one step below the
conjectured range in our rank notation.

A different intersection relation requires two partitions to share entire
blocks.  This whole-block model includes the uniform-partition results of
Meagher and Moura~\cite{MeagherMoura}, the Hilton--Milner theorem of Ku and
Wong~\cite{KuWongPartitions}, and the recent sharp Erd\H{o}s--Ko--Rado and
Hilton--Milner results of Wen and Lv for $k$-partitions
\cite{WenLvKPartitions}; Kupavskii's work also treats several such
models~\cite{KupavskiiPartitions}.  In contrast, the present paper studies
pair-intersection: two partitions are intersecting when some pair of
ground-set elements lies in a common block of both partitions.  Under the
complete-graph matroid correspondence, this is exactly the condition that
the meet of the associated flats has positive rank.  The two lines of work
are therefore adjacent, but neither contains the other.

The proofs mainly combine a rank-sensitive form of spread approximation and
peeling with exact extension counts in the partition lattice.  The linear
$t=1$ argument additionally uses an occupancy-tail estimate to separate flats
by atomic weight and a singleton-switching comparison between a full star and
the family outside it.  For the Hilton--Milner theorem, localized rank-two
covers convert popular centres into universal covers, after which a finite
rank-two classification determines the extremal structure.

The modern $r$-spread technology grew out of the robust-sunflower advances
of Alweiss, Lovett, Wu, and Zhang and of Rao
\cite{AlweissLovettWuZhang,RaoSunflowers}, with a closely related threshold
formulation developed by Frankston, Kahn, Narayanan, and Park
\cite{FrankstonKahnNarayananPark}.  Kupavskii and Zakharov introduced the
systematic spread-approximation and peeling framework for
forbidden-intersection problems~\cite{KupavskiiZakharovSpread}; Wen and Lv
later combined it with covers and fingerprints~\cite{WenLvUnified}, while
Ihringer and Kupavskii developed a subspace-spread analogue
\cite{IhringerKupavskii}.  In the partition lattice, the spread exponent must
be lattice rank rather than the cardinality of the atom set: the three edges
of a triangle have cardinality three but closure rank two.  Our proofs use the
deterministic heavy-link extraction and peeling component of this framework.

Write $\Om_n:=\{0,1,\ldots,n\}$, regard $K_{n+1}$ as the complete graph
on $\Om_n$, and put $M_n:=M(K_{n+1})$.  Let $\F_k(M_n)$ denote the rank-$k$
flats of $M_n$.  We use $\rk$, $\wedge$, and $\vee$ for lattice rank, meet,
and join, respectively, in accordance with standard geometric-lattice
notation.
Throughout the paper, every family described as intersecting or
$t$-intersecting is assumed to be nonempty.  Consequently, every meet
$\meetall_{A\in\A}A$ appearing below is a meet over a nonempty family.
In the notation of this paper, the inequality in Czabarka's conjecture takes
the following particularly simple form.  We record with it the natural
uniqueness strengthening.

\begin{conjecture}[Czabarka's partition-EKR conjecture]
\label{conj:sharp-EKR-t1}
Let $k\ge2$ and $n\ge2k$.  If
$\A\subseteq\F_k(M_n)$ is intersecting, then
\[
 |\A|\le\sbinom{n-1}{k-1}.
\]
The displayed inequality is Czabarka's conjecture.  We further conjecture
that equality holds only for a full edge-star.
\end{conjecture}

Our first main result addresses this $t=1$ conjecture in an explicit linear
range.  Its proof combines rank spread approximation with a sparse--dense
decomposition according to atomic weight.

\begin{theorem}[Linear Erd\H{o}s--Ko--Rado theorem for $M_n$]
\label{thm:EKR-linear-t1}
Let $k\ge2$ and suppose that $n+1\ge8k$.
If $\A\subseteq\F_k(M_n)$ is intersecting, then
\[
 |\A|\le\sbinom{n-1}{k-1}.
\]
Equality holds if and only if $\A$ is a full edge-star.
\end{theorem}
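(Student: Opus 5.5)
We prove the bound by splitting the family according to its centres. A pair $e\in\binom{\Om_n}{2}$ is an atom of $M_n$, and the full edge-star $\A_e=\{F\in\F_k(M_n):e\subseteq F\}$ has $|\A_e|=\sbinom{n-1}{k-1}$: contracting $e$ gives a bijection with $\F_{k-1}(M_{n-1})$. Let $\A$ be intersecting with $|\A|\ge\sbinom{n-1}{k-1}$. We show that some atom $e$ lies in every member of $\A$. Then $\A\subseteq\A_e$, and both the bound and the uniqueness follow.

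\emph{Step 1 (rank spread approximation).} Define the atomic weight of a flat to be the number of pairs it contains, $\sum_i\binom{|B_i|}{2}$ over its blocks. A rank-$k$ flat has weight between $k$ and $\binom{k+1}{2}$. The first ingredient is an extension-count estimate. For a flat $X$ of rank $r$, the number of members of $\F_k(M_n)$ containing $X$ is $\sbinom{n-r}{k-r}$. The ratios $\sbinom{n-r-1}{k-r-1}/\sbinom{n-r}{k-r}$ are at most about $k/(n-k)$, and this is $\le 1/7$ when $n+1\ge8k$. We then run heavy-link extraction and peeling with the spread exponent measured in lattice rank. Repeatedly choose a flat $X$ whose link $\A(X)$ is $\beta^{-\rk}$-heavier than a random member, with $\beta\approx 2k/(n-k)$, and peel it off. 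This produces a family $\mathcal S$ of low-rank flats (rank $\le k$) together with a small remainder $\A'$ satisfying $|\A'|\ll\sbinom{n-1}{k-1}$. The flats in $\mathcal S$ are pairwise intersecting, since their meets must have positive rank; otherwise the spreadness of the links produces disjoint extensions. An occupancy-tail estimate then handles the sparse part: among random flats in a link, only a negligible proportion have large atomic weight. So, when we compare two spread links, we may restrict to flats whose blocks are nearly all singletons or pairs, and the probability that two such flats fail to meet is bounded below by a constant.

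\emph{Step 2 (dense case).} If $\mathcal S$ contains an atom $e$, we compare $\A\cap\A_e$ with $\A\setminus\A_e$ by singleton-switching. Each $G\in\A\setminus\A_e$ must meet every member of $\A\cap\A_e$. Take $F\in\A_e$ and move an endpoint of $e$ into a singleton block to destroy $F$'s overlap with $G$; the link of $e$ has many such configurations when $n\ge 8k$. This yields an injection, or a weighted double count, showing that each $G$ outside the star forces at least two flats of $\A_e$ to be missing from $\A$. Hence $|\A|\le|\A_e|$, with equality only when $\A\setminus\A_e=\emptyset$.

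\emph{Step 3 (no atom in $\mathcal S$).} If every flat of $\mathcal S$ has rank $\ge2$, then $|\A|\le\sum_{X\in\mathcal S}\sbinom{n-\rk X}{k-\rk X}+|\A'|$. The number of pairwise intersecting rank-$\ge2$ flats that can appear is controlled: they all meet a fixed flat of rank $\le k$, so there are $O(k^2)$ choices of a shared atom times the extension counts. Each term is therefore at most $\sbinom{n-2}{k-2}\approx\frac{k}{n}\sbinom{n-1}{k-1}$, and the total falls below $\sbinom{n-1}{k-1}$ once $n+1\ge8k$.

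The main obstacle is making Step 3 and the error term $|\A'|$ work with the explicit constant $8$ rather than an unspecified large constant. We must bound the Stirling-number ratios uniformly; here I would use $S(N,b-1)/S(N,b)\le b^2/(N-b)$-type inequalities. We must also check that the triangle phenomenon, where rank is smaller than the number of atoms, does not inflate the count of heavy links. I would first try to carry the whole computation in terms of the ratio $k/(n+1-k)\le1/7$ and see whether the geometric sums close.
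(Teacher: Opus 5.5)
Your outline names the right ingredients, but Step~3 and the spread step of Step~1 incur the quadratic atomic loss that a linear threshold must avoid. Write $N=n+1$, $b=N-k$ and $E_i=\sbinom{n-i}{k-i}$.

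The correct ratio is $E_{i+1}/E_i\le 2(k-i)/((N-i)b)$. Your bound $k/(n-k)$ is far too weak for any peeling sum to converge. Even with the sharp ratio, and even granting that only $\binom{k+1}{2}$ rank-two centres occur, they contribute up to $\binom{k+1}{2}E_2\approx (k/56)E_1$ at $N=8k$, which exceeds $E_1$ for large $k$.

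The same problem arises inside the peeling. Whenever the atoms of an opposing member serve as witnesses (in the simplification step, or in your claim that spread centres pairwise intersect), the spread parameter must exceed that member's atomic weight. That weight can be $\binom{k+1}{2}$, and then the layer errors $\sum_i\binom{i+1}{2}r^{i-1}E_i$ are small only for superlinear $N$.

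The missing idea is to truncate by atomic weight \emph{before} peeling. Peel only $\A_{\mathrm{sp}}=\{F\in\A:\omega(F)<r_0\}$, with $r_0=\lfloor 7b/10\rfloor$, using the fixed spread parameter $\mu+1\le 7b/10$, where $\mu$ is the maximal weight in $\A_{\mathrm{sp}}$. This gives:
\begin{itemize}
\item every centre inherits weight at most $\mu$;
\item a core atom $a$ not common to $\A_{\mathrm{sp}}$ is covered by at most $\mu$ rank-two centres $a\vee e$;
\item each layer decays geometrically with ratio at most $7/40$, so $|\A_{\mathrm{sp}}|<\frac{22}{23}E_1$ when $\A_{\mathrm{sp}}$ has no common atom.
\end{itemize}
The heavy members are never peeled. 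They are bounded globally, $T(r_0)<E_1/29$, through the occupancy model. Restricting to sparse flats ``inside a spread link'' cannot replace this, because links of $\A$ are not random and may consist entirely of heavy flats.

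Step~2 has both a logical and a quantitative gap. Logically, knowing that each $G\in\A\setminus\A_e$ excludes some members of $\A_e$ does not bound $|\A\setminus\A_e|$ by the number of excluded members: the excluded sets of different $G$ can coincide. A double count fails too, since a single star member is atom-disjoint from a huge number of flats outside the star.

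Quantitatively, a heavy $G$ (for example a rank-$k$ clique flat) is atom-disjoint from only an exponentially small fraction of $\A_e$. Switching guarantees only $|\Gamma_e(G)|\ge E_1\eta^{\omega(G)}$ with $\eta=(N-3k+1)/(N-3k+2)$. For $\omega(G)=\binom{k+1}{2}$ and $N=8k$ this is about $E_1e^{-k/10}$, and nothing in your peeling makes $|\A\setminus\A_e|$ that small.

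The paper's comparison works because the case split is made on the sparse part. If $\A_{\mathrm{sp}}$ has a common atom $a$, every member of $\B=\A\setminus\E_k(a)$ is heavy. For the lightest $F\in\B$, with $r=\omega(F)\ge r_0$,
\[
 |\B|\le T(r)<E_1\eta^r\le|\Gamma_a(F)|,
\]
so the star loses strictly more members than $\B$ contributes.

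The essential input absent from your plan is this tail bound $T(r)<E_1\eta^r$, uniform in $r\ge r_0$. Its decay rate matches the per-edge loss of the switching. It is proved by a modal exponential tilt of the occupancy law for $k\ge120$, and by a union bound over a large block for $k<120$.

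Finally, moving an endpoint of $e$ into a singleton block, as you describe, destroys $e\le F$. The switch must instead move an endpoint of a conflicting edge in the graph obtained by contracting $e$. That is where the $N-3k+1$ admissible singletons come from.
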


Before Theorem~\ref{thm:EKR-linear-t1}, the strongest general conclusion
available for Czabarka's unrestricted fixed-excess problem was the
consequence of Kupavskii's fixed-profile theorem described above: for each
fixed $k$, the conjectured inequality holds once $n$ is sufficiently large,
but that result supplies no threshold linear in $k$
\cite[Theorem~9]{KupavskiiPartitions}.  Theorem~\ref{thm:EKR-linear-t1}
replaces this qualitative largeness hypothesis by the explicit condition
$n+1\ge8k$ and also determines the equality case.  It therefore upgrades
the previous eventual result to a genuinely linear theorem and gives an
explicit constant-factor approximation to the conjectured sharp range
$n\ge2k$.

We next treat general $t$-intersection.  For its statement, put
\[
  m_k:=\binom{k+1}{2},
  \qquad c_t:=2(t+2)^{2t}(2t)!.
\]
The following result gives a completely explicit quadratic threshold for
every fixed $t$.

\begin{theorem}[Erd\H{o}s--Ko--Rado theorem for $M_n$]
\label{thm:EKR}
Let $k>t\ge1$ and suppose that
\begin{equation}
\label{eq:EKR-explicit-threshold}
  n+1-k\ge c_t(m_k+1).
\end{equation}
If $\A\subseteq\F_k(M_n)$ is $t$-intersecting, then
\[
  |\A|\le \sbinom{n-t}{k-t}.
\]
Equality holds if and only if there is a $t$-flat $T$ such that
\[
  \A=\{A\in\F_k(M_n):T\le A\}.
\]
\end{theorem}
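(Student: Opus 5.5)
The plan is a spread-approximation-and-peeling argument with rank, rather than atom count, as the spread parameter, followed by a stability step for uniqueness.

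\textbf{Extension counts.} First I record two counts in the partition lattice. For a flat $F$ of rank $r\le k$, the number of rank-$k$ flats above $F$ is $\sbinom{n-r}{k-r}$, since contracting $F$ gives $M_{n-r}$. Adjacent values compare as follows. Writing $b=n+1-k$ for the block number, when $b$ is large compared with $m_k$ one gets
\[
\sbinom{n-r-1}{k-r-1}\le \frac{C\,m_k}{b}\sbinom{n-r}{k-r}.
\]
Combinatorially, a rank-$k$ flat containing $F$ together with a fixed atom outside $F$ is a rare event. I would also prove a rank-spread statement for full stars. Let $\mathcal S(F)=\{A\in\F_k(M_n):F\le A\}$. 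For every flat $G\ge F$ of rank $\rk F+j$, the proportion of $\mathcal S(F)$ lying above $G$ is at most $(C' m_k/b)^{j}$. I would prove this by a union bound over chains of atoms: each $A$ has at most $m_k$ atoms, and each forced merge of blocks costs a factor of order $1/b$.

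\textbf{Spread approximation.} Apply deterministic heavy-link extraction to $\A$ with spread parameter $\rho\approx (t+2)^2 m_k/b$. This produces a family $\mathcal T$ of flats of rank at most $t+1$, say, and a remainder $\A'$. Every $A\in\A\setminus\A'$ lies above some $T\in\mathcal T$, and $\A'$ is small: at most a $\rho$-fraction of $\sbinom{n-t}{k-t}$ times a factor like $(2t)!$. For each $T\in\mathcal T$, the family $\A_T$ is $\rho$-spread relative to $T$ in the rank sense.

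The key lemma is the following. If $T_1,T_2\in\mathcal T$ with $\rk(T_1\wedge T_2)<t$, then spreadness lets one find $A_1\ge T_1$ and $A_2\ge T_2$ in $\A$ with $\rk(A_1\wedge A_2)<t$, contradicting $t$-intersection. The proof samples $A_1$ and then $A_2$ and uses the spread bound to show that the probability of gaining extra common rank is below $1$. Hence $\mathcal T$ is itself $t$-intersecting. Peeling then reduces to a family of $t$-flats, and these must coincide. Otherwise $\mathcal T$ consists of flats of rank greater than $t$, whose total contribution is at most $|\mathcal T|\cdot\sbinom{n-t-1}{k-t-1}$. By the extension-count ratio this is $o(\sbinom{n-t}{k-t})$, and the threshold $c_t(m_k+1)$ is chosen to make it strictly smaller.

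\textbf{Conclusion and main obstacle.} So either $\A$ is small, or there is a single $t$-flat $T$ such that all but a small remainder of $\A$ lies in $\mathcal S(T)$. In the second case, take $B\in\A$ with $T\not\le B$. Then $\rk(B\wedge T)<t$. Count the members of $\mathcal S(T)$ that $t$-intersect $B$: each must contain some rank-$(t+1)$ flat $T\vee e$ with $e\le B$, giving at most $m_k\sbinom{n-t-1}{k-t-1}$ such members. The family therefore loses far more of $\mathcal S(T)$ than it gains, so $|\A|<\sbinom{n-t}{k-t}$ unless $\A\subseteq\mathcal S(T)$. This yields both the bound and the equality case.

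I expect the main obstacle to be the key lemma that $\mathcal T$ is $t$-intersecting. That step is where the triangle phenomenon (closure rank below atom count) forces the spread to be measured by rank. Getting explicit constants of size $(t+2)^{2t}(2t)!$ there requires careful bookkeeping of chains of atoms, and I would try first to do this by induction on the rank deficit.
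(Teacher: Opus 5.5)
There is a genuine gap, and it is not in the step you flagged. Your key lemma is the easy part. Every flat has at most $m_k$ atoms, so a union bound over the atoms in $\tau(T_2)\setminus\tau(T_1)$ and in $\tau(A_1)\setminus\tau(T_2)$ already makes the centres $t$-intersecting once the spread parameter $r$ exceeds about $2m_k$. That parameter does not depend on $b$. In the paper this is the contrapositive in the peeling lemma, with $r_i=m_i+1$.

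The unsupported step is your remainder bound $|\A'|\lesssim(2t)!\,\rho\,E_t$.
\begin{itemize}
\item Plain heavy-link extraction on $\A$ does not use $t$-intersection. It stops only when the maximal heavy link $Y$ has $\rk Y>q$, and it then gives $|\A'|\le r^{\rk Y}E_{\rk Y}\le r^{q+1}E_{q+1}$. The exponent is counted from rank $0$.
\item Relative to $E_t$, and using $E_{i+1}\le E_i/b$, this is $r^{t}(r/b)^{q+1-t}$.
\item With centres of rank at most $t+1$ and $r\gtrsim 2m_k$, you need $b\gtrsim m_k^{(t+2)/2}$. That is of order $k^{t+2}$, not the hypothesis $b\ge c_t(m_k+1)$.
\end{itemize}
Your choice $\rho\approx(t+2)^2m_k/b$ does not rescue this, under either reading:
\begin{itemize}
\item If $\rho$ is the spread fraction, the key lemma needs $(m_k+m_{t+1})\rho<1$. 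That means $b\gtrsim(t+2)^2m_k^2$, which the hypothesis does not give.
\item If $1/\rho$ is the spread parameter, the extraction bound $\rho^{-(q+1)}E_{q+1}$ is of order $b^{t}E_t/((t+2)^2m_k)^{q+1}$. This is not small.
\end{itemize}
Absorbing the factor $r^t\approx m_k^t$ with $b=\Theta_t(m_k)$ would force centres up to rank roughly $t+O(t\log k)$. You would then have to count centres at every intermediate rank, and your sketch does not do that.

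The missing idea is to use $t$-intersection layer by layer, so that the exponent becomes $i-t$ rather than $i$. The paper peels from rank $k$ downward. A subflat $X$ replaces $\C[X]$ only if it is admissible, meaning it $t$-intersects every remaining centre. So the current family stays a $t$-intersecting antichain by construction, and no sampling lemma is needed.

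Each frozen layer $\W_i$ is therefore $t$-intersecting. All its members contain one of at most $\sbinom{i}{t}$ rank-$t$ subflats of a fixed $F\in\W_i$. Only then is spread extraction run, starting at rank $t$, which gives $|\W_i|\le\sbinom{i}{t}r_i^{i-t}$. Combined with $E_i\le b^{-(i-t)}E_t$, the remainder is at most $\sum_{i>t}\sbinom{i}{t}(r_i/b)^{i-t}E_t$. The bound $\sbinom{i}{t}\le(i+1)^{2t}$ is what makes $b\ge c_t(m_k+1)$ sufficient. The terminal core is then empty or a single $t$-flat.

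Your final step agrees with the paper: the members of $\mathcal S(T)$ that $t$-intersect $B$ number at most $m_kE_{t+1}$. Also, the full-star ratio needs no $m_k$ factor, since $E_{r+1}\le E_r/b$ exactly.
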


\begin{remark}
\label{rem:threshold-obstructions}
For $t\ge2$, the sufficient condition in
\eqref{eq:EKR-explicit-threshold} is $O_t(k^2)$ for fixed $t$.  For $t=1$,
Theorem~\ref{thm:EKR-linear-t1} gives the linear bound $n+1\ge8k$, while
Czabarka's conjecture, Conjecture~\ref{conj:sharp-EKR-t1}, predicts the
sharp range $n\ge2k$.
A direct construction, proved in Section~\ref{sec:t-ekrproof}, shows that any
threshold valid for all larger $n$ must be at least $2k-t+1$.
For $t\ge2$ this lower bound is not sufficient: a compatible-split family beats
a full $t$-star at the proposed endpoint when $k=t+1$; see
Corollary~\ref{cor:EKR-core-obstruction}.  The intrinsic $N\ge3k$ barrier of
the singleton-switching argument is explained in
Remark~\ref{rem:linear-method-limit}.
\end{remark}

For the nontrivial problem, fix distinct vertices $0,1$ and a $k$-set
$Y\subseteq\Om_n\setminus\{0,1\}$.  Let $a$ be the atom corresponding to the
edge $01$, and let $C_i$ be the rank-$k$ flat whose only nonsingleton block is
$\{i\}\cup Y$, for $i\in\{0,1\}$.  Define
\begin{equation}
\label{eq:Gconstruction-intro}
 \G_k(a,Y):=
 \{F\in\F_k(M_n):a\le F\text{ and }\rk(F\wedge C_0)\ge1\}
 \cup\{C_0,C_1\}.
\end{equation}
This family is intersecting and has no common atom.

Two families $\A,\A'\subseteq\F_k(M_n)$ are called \emph{isomorphic} if
there is a permutation $\sigma\in\operatorname{Sym}(\Om_n)$ such that
$\sigma(\A)=\A'$.  Every such permutation acts naturally as an automorphism
of $M_n=M(K_{n+1})$ and hence on its lattice of flats.

For integers $1\le r\le N$ and $1\le b\le N$, let
$\sbinomsep{N-1}{N-b}{r}$ denote the number of partitions of an $N$-element
set into $b$ blocks in which $r$ prescribed elements lie in distinct blocks.
The exact formula is given in Lemma~\ref{lem:separated-stirling}.  Set
\[
 \operatorname{HM}(n,k)
 :=\sbinom{n-1}{k-1}-\sbinomsep{n-1}{k-1}{k+1}+2.
\]

\begin{theorem}[Hilton--Milner theorem for $M_n$]
\label{thm:HM}
Let $k\ge3$, put $m=\binom{k+1}{2}$, and suppose that
\begin{equation}
\label{eq:HM-explicit-threshold}
  n+1-k\ge3\left(m^3+\binom m2\right).
\end{equation}
If $\A\subseteq\F_k(M_n)$ is intersecting and
$\rk(\meetall_{A\in\A}A)=0$, then
\[
  |\A|\le \operatorname{HM}(n,k).
\]
Equality holds if and only if $\A$ is isomorphic to $\G_k(a,Y)$.

For $k=2$ the same conclusion holds for every $n\ge3$, and
$\operatorname{HM}(n,2)=5$.
\end{theorem}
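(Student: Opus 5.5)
We follow the Hilton--Milner template: spread approximation reduces the problem to a bounded-rank cover, and an exact count finishes it.

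\textbf{Lower bound.} First we verify that $\G_k(a,Y)$ is intersecting, has trivial total meet, and has size exactly $\operatorname{HM}(n,k)$. A $k$-flat containing $a$ corresponds, after contracting the edge $01$, to a $(k-1)$-flat of $M_{n-1}$. It fails to meet $C_0$ in positive rank exactly when the $k+1$ points of $\{0\}\cup Y$ (with $0$ identified with $1$) lie in distinct blocks. This gives
\[
\sbinom{n-1}{k-1}-\sbinomsep{n-1}{k-1}{k+1}
\]
members, plus the two extra flats $C_0,C_1$. Neither $C_0$ nor $C_1$ contains $a$, and they meet each other on $Y$ since $k\ge2$. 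A member $F\ge a$ meets $C_1$ because $F$ meets $C_0$ on a pair inside $\{0\}\cup Y$, and $F$ identifies $0$ with $1$.

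\textbf{Upper bound, structural step.} We apply the rank-sensitive spread approximation and peeling from the EKR proof with spread parameter of order $m$. This yields a family $\mathcal S$ of flats of rank at most $k$, each a meet-witness, such that the members of $\A$ not lying above some $S\in\mathcal S$ number only $O(\sbinom{n-2}{k-2}\cdot\mathrm{poly}(m))$. Since $\A$ is large (at least $\operatorname{HM}(n,k)$, which is $\sim m\sbinom{n-2}{k-2}$), $\mathcal S$ must contain rank-one elements. We then show that $\mathcal S$ contains exactly one atom $a$ that is popular, i.e.\ lies below a positive fraction of $\A$. Two distinct popular atoms would force every member to meet both localized covers, and the resulting rank-two covers yield a count of order $\sbinom{n-3}{k-3}m^{O(1)}$, far below $\operatorname{HM}(n,k)$ under the hypothesis \eqref{eq:HM-explicit-threshold}. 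Here the $m^3$ term enters, from counting through triples of atoms.

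Let $\A_a=\{A\in\A:a\le A\}$ and $\B=\A\setminus\A_a$. Since the total meet of $\A$ has rank $0$, $\B$ is nonempty.

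\textbf{Counting step.} Fix $B\in\B$. Every $F\in\A_a$ satisfies $\rk(F\wedge B)\ge1$. In the contracted lattice this says that, among the pairs lying in a block of $B$, some pair becomes joined. Hence
\[
|\A_a|\le\sbinom{n-1}{k-1}-\#\{F\ge a:\ F\wedge B=0\}.
\]
The subtracted term is minimized, and so the bound on $|\A_a|$ is weakest, when $B$ has a single nonsingleton block of size $k+1$ containing exactly one of $0,1$. Any other shape of $B$ separates more: for example, a block containing both $0$ and $1$ is impossible since $a\not\le B$, and several blocks impose more constraints. The comparison of these counts via Lemma~\ref{lem:separated-stirling} gives a deficit of at least $\sbinom{n-3}{k-2}$-type order, which dominates $|\B|$. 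This is the place where a finite rank-two classification is needed.

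It remains to bound $|\B|$. All of $\B$ must pairwise intersect and also meet every member of $\A_a$. If $|\A_a|$ is near its maximum, the second constraint forces every $B\in\B$ to be of the form $C_i$ for a fixed $Y$, so $|\B|\le2$. Otherwise the loss in $|\A_a|$ exceeds the gain in $|\B|$. Equality analysis traces back to $\A=\G_k(a,Y)$.

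For $k=2$ we would argue by direct finite case analysis. Rank-$2$ flats are either a triangle or two disjoint edges, and a short check shows that the maximum nontrivial family has $5$ members.

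\textbf{Main obstacle.} The hardest step is the trade-off between $|\A_a|$ and $|\B|$ once $\B$ contains flats other than the two canonical ones. We would first try a stability argument: each $B\ne C_0,C_1$ destroys $\Omega(\sbinom{n-3}{k-3})$ members of the $a$-star beyond those killed by $C_0$. Since $|\B|\le m\sbinom{n-2}{k-2}$ trivially, the threshold \eqref{eq:HM-explicit-threshold} would then make the loss outweigh the gain.
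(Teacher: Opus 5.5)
There is a genuine gap. The two steps that carry the proof are asserted rather than proved, and the orders of magnitude you attach to them do not close the argument.

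\textbf{The reduction to a single centre.} A remainder of order $\sbinom{n-2}{k-2}\,\mathrm{poly}(m)=E_2\,\mathrm{poly}(m)$ is not small compared with $\operatorname{HM}(n,k)\approx mE_2$. So ``since $\A$ is large, $\mathcal S$ must contain rank-one elements'' does not follow. You must stop peeling at rank two. Then the remainder is covered by extensions of peeled flats of rank at least three, and $|\R|\le\Psi_k(b)E_2$ with $\Psi_k(b)<1$.

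Even then, a terminal core $\C\subseteq\F_2(M_n)$ with no common atom only yields $|\A|\le|\C|E_2+|\R|$. Beating $|\G_k(a,Y)|\ge mE_2-\binom m2E_3+2$ requires $|\C|\le m-1$. The generic peeling bound $|\W_2|\le\sbinom{2}{1}r_2=12$ does not give this for $k=3,4$. The paper instead uses the finite classification in Lemma~\ref{lem:rank-two-five} (at most five members), which is exactly tight at $k=3$, where $m-1=5$. That is where a rank-two classification is needed, not in your counting step.

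Conversely, once the core contains an atom $a$, being an intersecting antichain forces it to equal $\{a\}$, so there is no ``two popular atoms'' case. Crucially, $\B=\A\setminus\A_a$ then lies in the remainder, so $|\B|\le\Psi_k(b)E_2$.

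\textbf{The trade-off.} The step you call the main obstacle cannot be closed by the mechanism you propose. Losses of order $E_3$ caused by different members of $\B$ overlap and do not add up. Moreover, the bound $|\B|\le mE_2$, which you do not justify, dwarfs any such loss since $E_2\ge bE_3$.

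What works is that a single non-canonical $B\in\B$ already costs order $E_2$:
\begin{itemize}
\item Every $F\ge a$ meeting $B$ lies in $\bigcup_{e\in\tau(B)}\E_k(a\vee e)$. So if $B$ is not a $(k+1)$-clique, then $\omega(B)\le m-1$ and $|\A_a|\le(m-1)E_2$. Note that splitting $B$ into several blocks \emph{removes} constraints, contrary to your remark.
\item If two members $B,C\in\B$ are cliques whose vertex sets differ after identifying $0$ with $1$, some $e\in\tau(C)$ gives at least $E_2-mE_3$ members of $\E_k(a\vee e)$ that meet $C$ but miss $B$.
\end{itemize}
Together with $|\B|\le\Psi_k(b)E_2$ and Lemma~\ref{lem:numerical-remainder}, this forces $\B$ to consist of cliques with one common contracted vertex set. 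Hence either $\B=\{\kk(X)\}$ with $X\cap\{0,1\}=\varnothing$, or $\B\subseteq\{\kk(\{0\}\cup Y),\kk(\{1\}\cup Y)\}$. The first case gives $\mathscr F_k(a,X)$, which is smaller by one; your equality analysis omits it.

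\textbf{Comparison with the paper.} The paper avoids this bookkeeping with localized high covers. A centre $a\vee e$, $e\in\tau(C)$, with more than $m^2E_3$ extensions in $\A$ carries $m+1$ disjoint petals, and therefore meets every member of $\A$ (Lemma~\ref{lem:localized-high-covers}). This, not counting through triples of atoms, is where the $m^3$ arises. Either at most $m-1$ centres are high, giving $|\A|\le(m-1)E_2+m^3E_3+\Psi_k(b)E_2$, or all $m$ are high, and Lemma~\ref{lem:core-classification} determines every member avoiding $a$. Your counting route can be completed along the lines above, but as written both decisive steps are missing.
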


The presence of both $C_0$ and $C_1$ in
\eqref{eq:Gconstruction-intro} is essential.  If the fixed atom is disjoint
from the exceptional clique, only one exceptional rank-$k$ flat is available;
that construction is smaller by exactly one and is therefore not extremal.

The paper is organized as follows.  Section~\ref{sec:preliminaries}
collects the matroid and partition-lattice preliminaries.
Section~\ref{sec:rank-peeling} develops rank spreadness, peeling, and
localized rank-two covers.  Section~\ref{sec:proofs} proves the general
$t$-intersection theorem and the Hilton--Milner theorem.  The linear
Erd\H{o}s--Ko--Rado theorem is proved separately in
Section~\ref{sec:linear-ekrproof}.
Section~\ref{sec:further} discusses threshold obstructions and open problems.
Appendix~\ref{app:technical-estimates} contains the elementary estimates used
to close the explicit constants; all combinatorial and probabilistic
reductions remain in the main text.

\section{Matroid and partition-lattice preliminaries}
\label{sec:preliminaries}

\subsection{Basic definitions}

\begin{definition}
A \emph{matroid} is a pair $M=(E,\I)$, where $E$ is a finite set and
$\I\subseteq2^E$ satisfies:
\begin{enumerate}[label=\textup{(\arabic*)}]
\item $\varnothing\in\I$;
\item if $I\in\I$ and $I'\subseteq I$, then $I'\in\I$;
\item if $I_1,I_2\in\I$ and $|I_1|<|I_2|$, then some
      $e\in I_2\setminus I_1$ satisfies $I_1\cup\{e\}\in\I$.
\end{enumerate}
The members of $\I$ are the independent sets.  For $X\subseteq E$, its rank is
\[
  \rk_M(X)=\max\{|I|:I\subseteq X,\ I\in\I\}.
\]
A set $F\subseteq E$ is a \emph{flat} if
$\rk(F\cup\{e\})=\rk(F)+1$ for every $e\in E\setminus F$.
We write $\F(M)$ for the lattice of flats, ordered by inclusion, and
$\F_k(M)$ for its rank-$k$ level.
\end{definition}

The meet in $\F(M)$ is set intersection, while the join is
$F\vee F'=\operatorname{cl}(F\cup F')$.

\begin{example}
If $G=(V,E)$ is a finite graph and $\I$ is the family of forests in $G$, then
$(E,\I)$ is the graphic matroid $M(G)$.  Its rank function is
\[
  \rk_{M(G)}(X)=|V|-c(V,X),
\]
where $c(V,X)$ is the number of connected components of the spanning graph
$(V,X)$.
\end{example}

Recall that $K_{n+1}$ has vertex set $\Om_n$ and that
$M_n=M(K_{n+1})$.
For a partition $\pi$ of $\Om_n$, let $F_\pi$ be the union of the edge sets of
the complete graphs induced by the blocks of $\pi$.  Graphic closure fills in
each connected component, so $\pi\mapsto F_\pi$ identifies the partition
lattice of $\Om_n$, ordered by refinement, with $\F(M_n)$.  Under this
identification,
\[
  \rk(F_\pi)=n+1-|\pi|.
\]

For $X\subseteq\Om_n$ with $|X|\ge2$, write $\kk(X)$ for the flat whose only
nonsingleton block is $X$.  In particular,
$\eps_{xy}:=\kk(\{x,y\})$ is the atom corresponding to the edge $xy$.
For a flat $F$, define its set of atoms by
\[
  \tau(F):=\{\eps_{xy}:\eps_{xy}\le F\}.
\]
Under the graphic representation, $\tau(F)$ is simply the edge set of $F$.

\begin{lemma}
\label{lem:number-atoms}
If $F$ has rank $r$, then
\[
  |\tau(F)|\le \binom{r+1}{2}.
\]
Equality holds if and only if $F=\kk(X)$ for some $(r+1)$-set $X$.
\end{lemma}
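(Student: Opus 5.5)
We prove Lemma~\ref{lem:number-atoms} directly in the graphic representation, where $\tau(F)$ is the edge set of $F$. Let $F=F_\pi$ for a partition $\pi$ of $\Om_n$, and let $B_1,\dots,B_m$ be the nonsingleton blocks of $\pi$ with sizes $b_i=|B_i|\ge2$. Since $\rk(F_\pi)=n+1-|\pi|$ and singleton blocks contribute nothing to the rank, we have
\[
 r=\sum_{i=1}^m (b_i-1),\qquad |\tau(F)|=\sum_{i=1}^m\binom{b_i}{2}.
\]
Put $r_i:=b_i-1\ge1$. Then $\binom{b_i}{2}=\binom{r_i+1}{2}$, so the claim reduces to the numerical inequality
\[
 \sum_{i=1}^m\binom{r_i+1}{2}\le\binom{r+1}{2}\qquad\text{whenever } r_i\ge1\text{ and }\sum_i r_i=r,
\]
together with a description of its equality cases.

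The key step is superadditivity of $g(x)=\binom{x+1}{2}=x(x+1)/2$ on the positive integers. For $x,y\ge1$,
\[
 g(x+y)-g(x)-g(y)=xy>0 .
\]
Inducting on $m$, and merging two parts at a time, gives
\[
 \binom{r+1}{2}-\sum_i\binom{r_i+1}{2}=\sum_{i<j}r_ir_j .
\]
This identity can also be checked directly by expanding $\bigl(\sum_i r_i\bigr)^2$. The right-hand side is nonnegative, which proves the inequality. It vanishes exactly when $m\le1$, since all $r_i\ge1$.

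For the equality case, $m=1$ means that $F$ has a single nonsingleton block $X$ with $|X|=r+1$, so $F=\kk(X)$. If $m=0$, then $r=0$ and $F$ is the bottom flat. In that case $|\tau(F)|=0=\binom{1}{2}$, and $F=\kk(X)$ would need $|X|=1$. Since $\kk$ is only defined for $|X|\ge2$, we read the statement for $r\ge1$, or adopt the convention that a one-element $X$ gives the empty flat. Conversely, $\kk(X)$ with $|X|=r+1$ has exactly $\binom{r+1}{2}$ atoms.

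There is no real obstacle in this argument. The only points needing care are the rank bookkeeping, namely that singleton blocks contribute nothing to either side, and the degenerate case $r=0$ in the equality statement.
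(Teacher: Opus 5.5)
Your proof is correct and follows the same route as the paper. Both reduce to the nonsingleton block sizes with $\sum_i(b_i-1)=r$ and compare $\sum_i\binom{b_i}{2}$ with $\binom{r+1}{2}$; the paper uses a unit-shifting convexity argument, while your exact identity $\binom{r+1}{2}-\sum_i\binom{r_i+1}{2}=\sum_{i<j}r_ir_j$ gives the equality case immediately. Your caveat about $r=0$ (where $\kk(X)$ with $|X|=1$ is undefined) is a fair point that the paper leaves implicit, and it is harmless because the equality case is only ever applied at positive rank.
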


\begin{proof}
Let the nonsingleton blocks of $F$ have sizes $s_1,\ldots,s_\ell$.  Then
\[
  \sum_{i=1}^{\ell}(s_i-1)=r,
  \qquad
  |\tau(F)|=\sum_{i=1}^{\ell}\binom{s_i}{2}.
\]
Moving one unit of size from a smaller nontrivial block to a larger one does
not decrease the second sum.  Hence it is maximized by a single block of size
$r+1$, giving $\binom{r+1}{2}$.  The equality statement follows from the same
strict convexity argument.
\end{proof}

\subsection{Counting flats and stars}

Under the partition-lattice identification,
\[
  |\F_k(M_n)|=\sbinom{n}{k}.
\]

\begin{lemma}[Separated Stirling numbers]
\label{lem:separated-stirling}
Let $1\le r\le N$ and $1\le b\le N$.  The number of partitions of an
$N$-element set into $b$ blocks in which $r$ prescribed elements lie in
distinct blocks is
\[
 \sbinomsep{N-1}{N-b}{r}
 =
 \begin{cases}
 0,& b<r,\\
 r^{N-r},& b=r,\\[2mm]
 \displaystyle\sum_{j=0}^{N-b}\binom{N-r}{j}r^j
 \sbinom{N-r-j-1}{N-b-j},& b>r.
 \end{cases}
\]
\end{lemma}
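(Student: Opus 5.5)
We count partitions of an $N$-set $\Omega$ into $b$ blocks with a prescribed $r$-set $R=\{x_1,\dots,x_r\}$ in distinct blocks by classifying them according to which non-prescribed elements share a block with some $x_i$. The degenerate cases are immediate. If $b<r$, then $r$ elements cannot lie in distinct blocks, so the count is $0$. If $b=r$, every block contains exactly one $x_i$; the partition is therefore determined by assigning each of the $N-r$ remaining elements to one of the $r$ blocks, and every such assignment is valid. This gives $r^{N-r}$.

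For $b>r$, decompose a valid partition $\pi$ as follows. Let $J\subseteq\Omega\setminus R$ be the set of elements lying in a block that contains some $x_i$, and put $j=|J|$. The blocks meeting $R$ are then $\{x_i\}\cup J_i$, where $(J_1,\dots,J_r)$ is an ordered (possibly empty) distribution of $J$ among the $r$ prescribed elements. The remaining blocks form a partition of $\Omega\setminus(R\cup J)$, which has $N-r-j$ elements, into exactly $b-r$ nonempty blocks. This correspondence is a bijection between valid partitions and triples consisting of a choice of $J$, a function $J\to R$, and a partition of the complement into $b-r$ blocks. The number of such triples is
\[
\binom{N-r}{j}\,r^j\,S(N-r-j,\,b-r).
\]

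Finally, translate this into the Stirling-binomial notation $\sbinom{x}{y}=S(x+1,x+1-y)$. Taking $x=N-r-j-1$ and $y=N-b-j$ gives $x+1=N-r-j$ and $x+1-y=b-r$, so $S(N-r-j,b-r)=\sbinom{N-r-j-1}{N-b-j}$. Summing over $j$, the partition of the complement needs $N-r-j\ge b-r$, that is, $j\le N-b$, which is exactly the upper limit in the statement. At $j=N-b$ the term is $S(b-r,b-r)=1$, consistent with $\sbinom{x}{0}=1$. Since $b>r$, we have $b-r\ge1$, so no degenerate $S(0,0)$ boundary case arises. If one wants $\sbinom{x}{y}$ to be defined for $y$ beyond its range, the extra terms vanish anyway.

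The main obstacle is only careful bookkeeping: making sure that the map $J\to R$ is arbitrary, including empty $J_i$, and that the index shift into the $\sbinom{\cdot}{\cdot}$ notation is exact. The argument is otherwise entirely elementary.
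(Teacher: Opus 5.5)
Your proposal is correct and follows the paper's proof: the same treatment of the cases $b<r$ and $b=r$, and for $b>r$ the same decomposition by the $j$ non-prescribed elements that join the $r$ prescribed blocks, giving the same summand $\binom{N-r}{j}r^j$ times the number of partitions of the remaining $N-r-j$ elements into $b-r$ blocks. Your explicit verification of the index shift $S(N-r-j,b-r)=\sbinom{N-r-j-1}{N-b-j}$ and of the range $0\le j\le N-b$ spells out a check that the paper leaves implicit.
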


\begin{proof}
The count is zero when $b<r$.  If $b=r$, every remaining element is assigned
to one of the $r$ blocks containing the prescribed elements, giving
$r^{N-r}$ choices.  Suppose that $b>r$.  Choose the $j$ nonprescribed
elements that join those $r$ blocks and assign each of them to one of the
$r$ blocks.  The remaining $N-r-j$ elements must form the other $b-r$
blocks, which can be done in
$\sbinom{N-r-j-1}{N-b-j}$ ways.  Necessarily $0\le j\le N-b$, and summing
over $j$ gives the formula.
\end{proof}

\begin{lemma}
\label{lem:extension-count}
If $R\in\F_r(M_n)$ and $r\le k\le n$, then
\[
  |\{F\in\F_k(M_n):R\le F\}|
  =\sbinom{n-r}{k-r}.
\]
\end{lemma}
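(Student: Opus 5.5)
The plan is to pass to the partition picture and collapse the blocks of $R$. A rank-$r$ flat $R$ corresponds to a partition $\rho$ of $\Om_n$ with $n+1-r$ blocks. A flat $F$ satisfies $R\le F$ exactly when its partition $\pi$ is coarser than $\rho$, that is, when every block of $\rho$ lies inside a single block of $\pi$. Partitions of $\Om_n$ coarser than $\rho$ then correspond bijectively to partitions of the $(n+1-r)$-element set of blocks of $\rho$. In one direction, such a $\pi$ is sent to the partition it induces on the blocks of $\rho$. In the other, a partition of the blocks is sent to the partition of $\Om_n$ whose blocks are the unions within each part.

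This bijection preserves the number of blocks. The rank-$k$ condition $\rk(F)=k$ means $|\pi|=n+1-k$, so the flats we are counting correspond to partitions of an $(n+1-r)$-set into $n+1-k$ blocks. Their number is therefore
\[
 S(n+1-r,\,n+1-k)=\sbinom{n-r}{(n-r)-(n-k)}=\sbinom{n-r}{k-r},
\]
using the definition $\sbinom{x}{y}=S(x+1,x+1-y)$ with $x=n-r$ and $x+1-y=n+1-k$. The hypothesis $r\le k\le n$ guarantees $0\le k-r\le n-r$, so the Stirling binomial is in its stated range.

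The only point requiring care is the precise correspondence between flat order and refinement, namely that $R\le F$ means $F_\rho\subseteq F_\pi$, which happens iff every block of $\rho$ is contained in a block of $\pi$. This follows because $F_\pi$ contains every edge inside a block of $\rho$ exactly when each such block is connected in $F_\pi$, and hence lies in one block of $\pi$. Beyond this check, the argument is routine, and I do not expect a genuine obstacle.
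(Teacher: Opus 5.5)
Your proposal is correct and follows essentially the same route as the paper's proof: identify $R\le F$ with coarsening of partitions, collapse the $n+1-r$ blocks of $R$, and count partitions of that set into $n+1-k$ groups. Your extra checks (the explicit bijection and the translation $S(n+1-r,n+1-k)=\sbinom{n-r}{k-r}$) are the only additions, and they are routine and correct.
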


\begin{proof}
The partition associated with $R$ has $n+1-r$ blocks.  A flat $F$ contains
$R$ precisely when its partition is a coarsening of that partition.  To obtain
rank $k$, the $n+1-r$ blocks must be partitioned into $n+1-k$ nonempty groups.
\end{proof}

For later use, it is convenient to abbreviate
\[
  E_r=E_r(n,k):=\sbinom{n-r}{k-r}
  \qquad(0\le r\le k)
\]
and
\[
  b:=n+1-k.
\]
Thus $E_r$ is the number of rank-$k$ extensions of any fixed rank-$r$
flat, and $b$ is the number of blocks in every partition representing a
rank-$k$ flat.

\begin{lemma}
\label{lem:exact-extension-ratio}
For $0\le r<k$,
\begin{equation}
\label{eq:exact-extension-ratio}
  E_r=bE_{r+1}+\sbinom{n-r-1}{k-r}\ge bE_{r+1}.
\end{equation}
The inequality is strict when $b\ge2$.  In particular,
$E_{r+1}\le E_r/b$, and $E_s\le E_r$ whenever $r\le s\le k$.
\end{lemma}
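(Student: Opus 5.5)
We translate $E_r$ back into Stirling numbers and apply the standard recurrence $S(N,m)=m\,S(N-1,m)+S(N-1,m-1)$. By definition $E_r=\sbinom{n-r}{k-r}=S(n-r+1,\,n-k+1)=S(n-r+1,b)$; likewise $E_{r+1}=S(n-r,b)$ and $\sbinom{n-r-1}{k-r}=S(n-r,\,n-k)=S(n-r,b-1)$. The recurrence with $N=n-r+1$ and $m=b$ then gives
\[
 E_r=S(n-r+1,b)=b\,S(n-r,b)+S(n-r,b-1)=bE_{r+1}+\sbinom{n-r-1}{k-r}.
\]
This is the identity \eqref{eq:exact-extension-ratio}. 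For completeness, we can also argue directly in the lattice. Fix an element of $\Om_n$ among the $n-r+1$ blocks of a rank-$r$ flat's partition, viewed as points. The last point is either a singleton block, leaving $S(n-r,b-1)$ partitions of the others, or it joins one of the $b$ blocks of a $b$-partition of the remaining $n-r$ points.

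The inequality $E_r\ge bE_{r+1}$ follows because the second term is nonnegative. For strictness we need $S(n-r,b-1)>0$, which holds exactly when $1\le b-1\le n-r$. Since $b\ge 2$ we have $b-1\ge1$. Also $b-1=n-k\le n-r$ because $r<k$, so both conditions hold. The only degenerate case is $b-1=0$, where $S(\cdot,0)=0$ for a positive ground set; this is why strictness requires $b\ge2$. We should check the index ranges once: $r<k\le n$ gives $n-r\ge1$, so all the Stirling numbers are defined with positive ground sets.

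Finally, dividing gives $E_{r+1}\le E_r/b$. Since $b=n+1-k\ge1$ (as $k\le n$), we get $E_{r+1}\le E_r$, and induction on $s-r$ yields $E_s\le E_r$ for $r\le s\le k$. The only point requiring care is the index bookkeeping between the rank notation $\sbinom{x}{y}$ and $S(x+1,x+1-y)$. In particular, one must verify that $\sbinom{n-r-1}{k-r}$ really corresponds to $b-1$ blocks on $n-r$ points; this holds since $(n-r-1)+1-(k-r)=n-k=b-1$.
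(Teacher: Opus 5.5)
Your proof is correct and is essentially the paper's argument: the paper applies the same Stirling recurrence in the rank-indexed form $\sbinom{x}{y}=(x+1-y)\sbinom{x-1}{y-1}+\sbinom{x-1}{y}$ with $x=n-r$, $y=k-r$, and you apply it in the classical form $S(N,m)=mS(N-1,m)+S(N-1,m-1)$. You also make explicit what the paper leaves implicit: when $b=1$, the term $\sbinom{n-r-1}{k-r}$ falls outside the defined range $0\le y\le x$ and must be read as $S(n-r,0)=0$.
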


\begin{proof}
Apply the standard recurrence
\[
  \sbinom{x}{y}
  =(x+1-y)\sbinom{x-1}{y-1}+\sbinom{x-1}{y}
\]
with $x=n-r$ and $y=k-r$.  If $b\ge2$, then the final Stirling binomial in
\eqref{eq:exact-extension-ratio} is positive.  The remaining assertions
follow immediately.
\end{proof}

\begin{lemma}
\label{lem:number-subflats}
If $A\in\F_k(M_n)$, then the number of rank-$t$ flats contained in $A$ is at
most $\sbinom{k}{t}$.
Equality is attained when $A$ is a rank-$k$ clique flat.
\end{lemma}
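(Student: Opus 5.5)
The plan is to reduce the count of rank-$t$ subflats of $A$ to a count of coarsenings inside the blocks of $A$, and then compare with the clique case by merging blocks. Let $A$ have nonsingleton blocks $X_1,\ldots,X_\ell$ of sizes $s_1,\ldots,s_\ell$, so $\sum_i(s_i-1)=k$. A flat $F\le A$ is a refinement of the partition of $A$, that is, a choice of a partition of each $X_i$ (singletons outside stay singletons). If the partition of $X_i$ has rank $r_i$ (meaning $s_i$ minus its number of blocks), then $\rk F=\sum_i r_i$. Consequently the number of rank-$t$ flats below $A$ is
\[
 N(A)=\sum_{r_1+\cdots+r_\ell=t}\ \prod_{i=1}^{\ell}\sbinom{s_i-1}{r_i},
\]
since the rank-$r$ partitions of an $s$-set number $S(s,s-r)=\sbinom{s-1}{r}$. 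For the clique flat $\kk(X)$ with $|X|=k+1$, the rank-$t$ subflats are the partitions of $X$ into $k+1-t$ blocks, and there are exactly $\sbinom{k}{t}$ of them. This settles the equality claim.

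For the inequality I would show that merging two blocks does not decrease the count, and then induct on $\ell$. The key claim is: for $a,b\ge1$,
\[
 \sum_{r+r'=u}\sbinom{a-1}{r}\sbinom{b-1}{r'}\le\sbinom{a+b-2}{u}\qquad\text{for all }u .
\]
Here the left side has the sizes $a$ and $b$ with ranks summing to $u$. The right side should be read as counting rank-$u$ partitions of the disjoint union of an $(a-1)$-set and a $(b-1)$-set together with one extra shared point. Injectivity is cleanest in terms of sets: identify one point $p$ of $X_1$ with one point $q$ of $X_2$, producing a set $Z$ of size $a+b-1$. A partition pair $(\pi_1,\pi_2)$ of ranks $(r,r')$ maps to the partition of $Z$ obtained from $\pi_1\cup\pi_2$ by gluing the block of $p$ with the block of $q$. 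Gluing loses one block relative to $a+b$ points, while the point count drops by one, so the rank is $r+r'$. The map is injective because $\pi_1$ and $\pi_2$ are recovered by restricting to the two sides, with the glued point returned to both. Its target, rank-$u$ partitions of an $(a+b-1)$-set, has size $\sbinom{a+b-2}{u}$.

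Applying the claim to merge $X_1$ and $X_2$ yields a flat of the same rank $k$ with one fewer nonsingleton block, since $(a-1)+(b-1)=(a+b-1)-1$. Its count $N$ is at least $N(A)$, because the product formula factorizes and we convolve the merged pair with the remaining factors, which have nonnegative coefficients. Iterating reaches $\kk(X)$ with $|X|=k+1$, giving $N(A)\le\sbinom{k}{t}$.

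The main obstacle is the rank bookkeeping in the gluing map. I need to verify that the map lands in rank exactly $u$ and is injective even when $p$ or $q$ is a singleton in its own partition. Both checks are immediate from the restriction argument, so I expect no real difficulty.
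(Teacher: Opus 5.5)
Your proof is correct and is essentially the paper's argument. The paper identifies one distinguished point from every nonsingleton block of $A$ at once, and then injects the rank-$t$ subflats (recovered by restriction) into the rank-$t$ partitions of a $(k+1)$-set; you perform the same gluing injection two blocks at a time and induct through the product/convolution formula, which works but is a slightly longer route to the same count.
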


\begin{proof}
Let the nonsingleton blocks of $A$ be $X_1,\ldots,X_\ell$, with
$|X_i|=s_i$.  Thus
\[
  \sum_{i=1}^{\ell}(s_i-1)=k.
\]
Choose a distinguished point $x_i\in X_i$ for each $i$, and identify all the
$x_i$ to a single point.  The resulting ground set has
\[
  \sum_{i=1}^{\ell}s_i-(\ell-1)=k+1
\]
points.

A subflat $T\le A$ is obtained by partitioning each $X_i$ into smaller
blocks.  After the above identification, merge the blocks containing the
distinguished points and leave every other block unchanged.  If the
restrictions of $T$ to the $X_i$ have ranks $t_i$, the resulting partition
has rank $\sum_i t_i=\rk(T)$.  Moreover, the original restrictions are
recovered by restricting the resulting partition to the images of the
$X_i$, so this map is injective.  Rank-$t$ subflats of $A$ therefore inject
into rank-$t$ flats of $M(K_{k+1})$, of which there are
$\sbinom{k}{t}$.  A clique flat has all these subflats, proving equality.
\end{proof}

\begin{definition}
Let $\A\subseteq\F_k(M_n)$.
\begin{enumerate}[label=\textup{(\arabic*)}]
\item The family $\A$ is \emph{$t$-intersecting} if
      $\rk(A\wedge A')\ge t$ for all $A,A'\in\A$.
\item It is \emph{trivially $t$-intersecting} if
      $\rk(\meetall_{A\in\A}A)\ge t$.
\item If $T\in\F_t(M_n)$, the family
      $\E_k(T):=\{F\in\F_k(M_n):T\le F\}$ is the full $t$-star with centre
      $T$.
\end{enumerate}
\end{definition}

By Lemma~\ref{lem:extension-count},
$|\E_k(T)|=\sbinom{n-t}{k-t}$.

\section{Spread approximation and localized covers}
\label{sec:rank-peeling}

\subsection{Rank spreadness and peeling}

For a family $\D$ of flats and a flat $X$, write
\[
  \D[X]:=\{D\in\D:X\le D\}.
\]
Ordinary spreadness bounds links by a power of the cardinality increment.
For flats, cardinality is not the correct parameter: the three edges of a
triangle have cardinality three but closure rank two.  We therefore use the
following rank version, analogous to subspace spreadness
\cite{IhringerKupavskii}.  Only its deterministic maximal-link consequence is
needed below.

\begin{definition}
Let $r>1$.  A nonempty family $\D[X]$ is \emph{$r$-rank-spread above $X$}
if, for every flat $Y>X$,
\begin{equation}
\label{eq:rank-spread}
  |\D[Y]|<r^{-(\rk(Y)-\rk(X))}|\D[X]|.
\end{equation}
\end{definition}

The strict inequality is convenient in the peeling argument.  The following
maximal-link lemma is purely order-theoretic; it uses only additivity of rank
increments along a chain.

\begin{lemma}
\label{lem:rank-spread-extraction}
Suppose that every member of $\D[X]$ has rank at most $\rk(X)+d$.  If
$|\D[X]|>r^d$, then there is a flat $Y\ge X$ such that
$|\D[Y]|>1$ and $\D[Y]$ is $r$-rank-spread above $Y$.
\end{lemma}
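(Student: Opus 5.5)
The plan is to take a maximizer over a finite set of flats, using the normalized weight $w(Y):=r^{\rk(Y)-\rk(X)}|\D[Y]|$. Consider the finite set
\[
 \Y:=\{Y\ge X:\ |\D[Y]|>r^{\rk(X)+d-\rk(Y)}\},
\]
which is the set of $Y\ge X$ with $w(Y)>r^d$. By hypothesis $X\in\Y$, so $\Y$ is nonempty. Since $\Y$ is finite, I choose $Y\in\Y$ maximizing $w(Y)$; among maximizers I take one of maximal rank. (Maximizing $w$ directly over all $Y\ge X$ with $\D[Y]\neq\varnothing$ would also work.) Because $\mathcal Y$ is not a macro the paper defines, in the write-up I would call this set $\mathcal Q$ or describe it in words.

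The first check is $|\D[Y]|>1$. Every $D\in\D[Y]$ satisfies $Y\le D$ and $\rk(D)\le\rk(X)+d$, so $\D[Y]\neq\varnothing$ forces $\rk(Y)\le\rk(X)+d$. Hence the exponent $e:=\rk(X)+d-\rk(Y)$ is at least $0$, and $|\D[Y]|>r^{e}\ge1$. In particular $\D[Y]$ is nonempty, as the definition of spreadness requires.

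For spreadness above $Y$, let $Z>Y$. Then $Z\ge X$, and additivity of rank increments along the chain $X\le Y<Z$ gives
\[
 w(Z)=r^{\rk(Z)-\rk(Y)}\,r^{\rk(Y)-\rk(X)}|\D[Z]|.
\]
If $Z\in\Y$, maximality gives $w(Z)\le w(Y)$, and the tie-break on rank, together with $\rk(Z)>\rk(Y)$, makes this strict, so $w(Z)<w(Y)$. If $Z\notin\Y$, then $w(Z)\le r^d<w(Y)$. In either case $w(Z)<w(Y)$. Dividing by $r^{\rk(Y)-\rk(X)}$ gives $|\D[Z]|<r^{-(\rk(Z)-\rk(Y))}|\D[Y]|$, which is exactly \eqref{eq:rank-spread} with $Y$ in place of $X$.

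I expect no real obstacle. The one delicate point is getting the strict inequality in \eqref{eq:rank-spread}, since a plain maximizer only gives $\le$. The rank tie-break handles this. Alternatively, one can repeatedly pass to a strictly larger violating flat $Z$: this strictly increases $w$ and strictly increases rank, so the process must terminate.
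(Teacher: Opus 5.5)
Your proof is correct and is essentially the paper's argument. The paper picks an inclusion-maximal $Y\ge X$ with $r^{\rk(Y)-\rk(X)}|\D[Y]|\ge|\D[X]|$, so the strict inequality in \eqref{eq:rank-spread} follows directly from this non-strict condition failing at every $Z>Y$, whereas you get strictness from a global maximizer of $w$ with a rank tie-break; the check $|\D[Y]|>1$ via $\rk(Y)\le\rk(X)+d$ is the same in both (one small slip in your closing aside: passing to a violating $Z>Y$ only gives $w(Z)\ge w(Y)$, not a strict increase, but the strict rank increase still forces termination, and that iteration is exactly the paper's inclusion-maximal choice).
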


\begin{proof}
Choose an inclusion-maximal $Y\ge X$ satisfying
\[
 |\D[Y]|\ge r^{-(\rk(Y)-\rk(X))}|\D[X]|.
\]
The displayed hypothesis implies $|\D[Y]|>1$, since otherwise
$|\D[X]|\le r^{\rk(Y)-\rk(X)}\le r^d$.  If $Z>Y$, maximality gives
\[
 |\D[Z]|<r^{-(\rk(Z)-\rk(X))}|\D[X]|
 \le r^{-(\rk(Z)-\rk(Y))}|\D[Y]|,
\]
which is precisely~\eqref{eq:rank-spread} above $Y$.
\end{proof}

We now formalize the peeling--simplification procedure of Kupavskii and
Zakharov~\cite{KupavskiiZakharovSpread} and its subspace version
\cite{IhringerKupavskii}.  For a family $\mathcal{S}$ of flats, put
\[
  \A[\mathcal{S}]:=\bigcup_{S\in\mathcal{S}}\A[S].
\]
Set $\C_k=\A$.  Inductively, suppose that $\C_i$ is a $t$-intersecting
antichain of flats of rank at most $i$ and that the already frozen layers are
$\W_{i+1},\ldots,\W_k$.  During stage $i$, call a flat $X$
\emph{admissible} for the current antichain $\C$ if
\begin{enumerate}[label=\textup{(\arabic*)}]
\item $\rk(X)\ge t$ and $X$ is a strict subflat of at least one member of
      $\C$; and
\item $X$ $t$-intersects every member of $\C\setminus\C[X]$.
\end{enumerate}
For an admissible $X$, replace
\[
 \C\quad\hbox{by}\quad(\C\setminus\C[X])\cup\{X\},
\]
and then immediately delete every member that strictly contains another
current member.  Thus the current family remains an antichain.  The
replacement preserves $t$-intersection: condition~(1) gives
$\rk(X)\ge t$, and condition~(2) checks $X$ against precisely the current
members that are not removed.  At every point of stage $i$ we also have the
coverage invariant
\[
 \A=\A[\C]\cup\bigcup_{j=i+1}^k\A[\W_j].
\]
Indeed, if $X\le C$, then $\A[C]\subseteq\A[X]$; the same observation shows
that pruning a larger centre does not change the covered union.  Every
current centre remains a subflat of an original member of $\A$.

The procedure terminates.  Indeed, the integer-valued potential
\[
 \sum_{C\in\C}\rk(C)
\]
strictly decreases at every replacement, and pruning can only decrease it
further.  Let $\T_i$ denote the terminal antichain at stage $i$, let
$\W_i$ be its rank-$i$ members, and put
$\C_{i-1}:=\T_i\setminus\W_i$.  We then continue with stage $i-1$.

For brevity put
\[
  m_i:=\binom{i+1}{2},\qquad r_i:=m_i+1.
\]

\begin{lemma}[Partition-lattice peeling]
\label{lem:partition-peeling}
Let $\A\subseteq\F_k(M_n)$ be $t$-intersecting.  For every
$i=k,k-1,\ldots,t+1$, the preceding procedure has the following properties.
\begin{enumerate}[label=\textup{(\arabic*)}]
\item The terminal family $\T_i=\C_{i-1}\cup\W_i$ is a
      $t$-intersecting antichain, and
\begin{equation}
\label{eq:peeling-cover-invariant}
 \A=\A[\C_{i-1}]\cup\bigcup_{j=i}^k\A[\W_j].
\end{equation}
\item There are no subfamily $\D\subseteq\T_i$ and flat $X$ with
      $\rk(X)\ge t$ for which $|\D[X]|>1$ and $\D[X]$ is
      $r_i$-rank-spread above $X$.
\item One has
\begin{equation}
\label{eq:Wi-bound}
  |\W_i|\le \sbinom{i}{t}r_i^{i-t}.
\end{equation}
\end{enumerate}
\end{lemma}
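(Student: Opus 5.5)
We argue by downward induction on $i$, starting from $\C_k=\A$, which is a $t$-intersecting antichain of rank-$k$ flats because all members have the same rank.

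\emph{Part (1).} Stage $i$ begins with a $t$-intersecting antichain $\C_i$ of flats of rank at most $i$, and frozen layers $\W_{i+1},\ldots,\W_k$ satisfying the coverage identity. We have already seen three facts. Each replacement preserves $t$-intersection, since condition~(1) gives $\rk(X)\ge t$ and condition~(2) checks $X$ against every member that is not removed. Pruning preserves the antichain property and the covered union. The integer potential $\sum_{C\in\C}\rk(C)$ strictly decreases, so stage $i$ terminates. Hence $\T_i$ is a $t$-intersecting antichain with
\[
\A=\A[\T_i]\cup\bigcup_{j>i}\A[\W_j].
\]
Splitting $\T_i=\C_{i-1}\cup\W_i$ gives \eqref{eq:peeling-cover-invariant}.

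For the next stage we need $\C_{i-1}$ to consist of flats of rank at most $i-1$. This holds because replacements only introduce flats strictly below existing centres, so every rank stays at most $i$, and the rank-$i$ members were removed into $\W_i$. Subfamilies of $t$-intersecting antichains are again $t$-intersecting antichains, so the induction continues.

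\emph{Part (2).} Suppose $\D\subseteq\T_i$ and $X$ with $\rk(X)\ge t$ are such that $|\D[X]|>1$ and $\D[X]$ is $r_i$-rank-spread above $X$. We show that $X$ would be admissible for $\T_i$, contradicting terminality.

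First, $X$ is a strict subflat of some member of $\T_i$. Indeed, if $X$ equalled a member $D$, the antichain property would force $\D[X]=\{D\}$, contradicting $|\D[X]|>1$.

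The key point is condition~(2): $X$ must $t$-intersect every $C\in\T_i$ not containing $X$. Suppose instead that $\rk(C\wedge X)<t$ for some such $C$. Every $D\in\D[X]$ satisfies $\rk(C\wedge D)\ge t$, while $\rk(C\wedge X)\le t-1$. So each such $D$ contains an atom $\eps\le C$ with $\eps\not\le X$, and then $Y=X\vee\eps>X$ has $\rk(Y)=\rk(X)+1$. Indeed, $C\wedge D$ is a flat of rank at least $t$, and it cannot be contained in $X$, since otherwise $C\wedge D\le C\wedge X$; hence some atom of $C\wedge D$ lies outside $X$.

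By Lemma~\ref{lem:number-atoms}, $C$ has at most $m_{\rk C}\le m_i$ atoms, because every flat in $\T_i$ has rank at most $i$. Therefore
\[
\D[X]\subseteq\bigcup_{\eps\in\tau(C)\setminus\tau(X)}\D[X\vee\eps].
\]
Rank spreadness gives $|\D[X\vee\eps]|<r_i^{-1}|\D[X]|$ for each such $\eps$, so
\[
|\D[X]|<\frac{m_i}{m_i+1}\,|\D[X]|,
\]
a contradiction. Thus $X$ is admissible, again contradicting terminality.

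\emph{Part (3).} Suppose, for contradiction, that $|\W_i|>\sbinom{i}{t}r_i^{i-t}$. Fix any $A\in\W_i$. Every $W\in\W_i$ $t$-intersects $A$, so $W$ contains some rank-$t$ flat $T\le A$: take a rank-$t$ subflat of $A\wedge W$. By Lemma~\ref{lem:number-subflats} there are at most $\sbinom{i}{t}$ such $T$ (using rank of $A$ equal to $i$). By pigeonhole, some $T$ has $|\W_i[T]|>r_i^{i-t}$. Members of $\W_i[T]$ have rank $i=\rk(T)+(i-t)$, so Lemma~\ref{lem:rank-spread-extraction} with $d=i-t$ yields $Y\ge T$ with $|\W_i[Y]|>1$ and $\W_i[Y]$ $r_i$-rank-spread above $Y$. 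Since $\rk(Y)\ge t$ and $\W_i\subseteq\T_i$, this contradicts part~(2).

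I expect the main obstacle to be the admissibility verification in part~(2). One must check that every current member either lies in $\C[X]$ or $t$-intersects $X$, and the atom count must be bounded by $m_i$ uniformly over centres of rank at most $i$; that uniformity is exactly why $r_i=m_i+1$ is chosen.
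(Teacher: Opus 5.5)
Your proposal is correct and follows the same route as the paper. Part~(1) comes from the replacement and pruning invariants by reverse induction. Part~(2) shows that a surviving spread link would make $X$ admissible: the witness $C$ with $\rk(C\wedge X)<t$ forces every member of $\D[X]$ through one of at most $m_i$ covers $X\vee e$, which is impossible when $r_i=m_i+1$. Part~(3) pigeonholes over the at most $\sbinom{i}{t}$ rank-$t$ subflats of a fixed member and applies Lemma~\ref{lem:rank-spread-extraction} against part~(2).

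The one step you leave implicit is that some atom of $C\wedge D$ lies outside $X$. This holds because a flat is the join of its atoms, and the paper states that atomicity point explicitly.
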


\begin{proof}
The state invariants established above show that $\T_i$ is a
$t$-intersecting antichain.  They also show, by reverse induction on the
stages, that the original family is covered exactly as in
\eqref{eq:peeling-cover-invariant}; this proves part~(1).

For part~(2), suppose to the contrary that $\D[X]$ is such a spread
subfamily.  Since $|\D[X]|>1$ and $\T_i$ is an antichain, $X$ is a strict
subflat of a current member.  If $X$ $t$-intersected every member of
$\T_i\setminus\T_i[X]$, it would be admissible, contrary to the definition
of $\T_i$.  Hence there is
$F\in\T_i\setminus\T_i[X]$ such that
\[
  \rk(F\wedge X)<t.
\]
For every $S\in\D[X]$, the $t$-intersection of $\T_i$ gives
$\rk(F\wedge S)\ge t$.  Because every flat in a geometric lattice is the
join of its atoms, $F\wedge S$ contains an atom $e$ not below $X$; otherwise
$F\wedge S\le X$, and hence $F\wedge S\le F\wedge X$, contradicting the two
rank inequalities.  The flat $X\vee e$ has rank $\rk(X)+1$ and lies below
$S$.  Since $\rk(F)\le i$, Lemma~\ref{lem:number-atoms} supplies at most
$m_i$ choices for $e$.  Thus the spread condition gives
\[
 |\D[X]|
 \le\sum_{e\in\tau(F)\setminus\tau(X)}|\D[X\vee e]|
 <\frac{m_i}{r_i}|\D[X]|,
\]
a contradiction.

For part~(3), there is nothing to prove when $\W_i=\varnothing$.
Otherwise fix $F\in\W_i$.  Every $S\in\W_i$ contains a rank-$t$
subflat of $F\wedge S$.  There are at most $\sbinom{i}{t}$ choices for this
subflat by Lemma~\ref{lem:number-subflats}; hence some rank-$t$ flat
$T\le F$ satisfies
\[
 |\W_i[T]|\ge |\W_i|/\sbinom{i}{t}.
\]
If $|\W_i[T]|>r_i^{i-t}$, Lemma~\ref{lem:rank-spread-extraction} produces a
forbidden $r_i$-rank-spread link of size greater than one.  Therefore
$|\W_i[T]|\le r_i^{i-t}$, which proves~\eqref{eq:Wi-bound}.
\end{proof}

The same procedure admits a fixed spread parameter when the atomic weights
of all current centres are uniformly bounded.  We record this version
separately because it is used in the linear argument.

\begin{lemma}[Uniform-parameter peeling]
\label{lem:uniform-parameter-peeling}
Let $q\ge0$ and $r>\max\{q,1\}$.  Suppose that
$\A\subseteq\F_k(M_n)$ is $t$-intersecting and
$|\tau(A)|\le q$ for every $A\in\A$.  In the peeling procedure above,
part~\textup{(2)} of Lemma~\ref{lem:partition-peeling} remains valid with
each $r_i$ replaced by the fixed parameter $r$, and every terminal layer
satisfies
\begin{equation}
\label{eq:uniform-parameter-Wi}
 |\W_i|\le \sbinom{i}{t}r^{i-t}.
\end{equation}
The cover invariant and the antichain invariant are unchanged.
\end{lemma}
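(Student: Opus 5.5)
The plan is to rerun the proof of Lemma~\ref{lem:partition-peeling} essentially verbatim, changing only the single place where the parameter $r_i$ enters. Parts~(1) and the antichain invariant do not depend on the spread parameter: they come from the replacement rule and the potential argument. That argument shows each admissible replacement preserves $t$-intersection and coverage, and it did not use the value of $r_i$. So the cover invariant \eqref{eq:peeling-cover-invariant} and the antichain property carry over unchanged.

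For the modified part~(2), I would argue by contradiction exactly as before. Suppose $\D\subseteq\T_i$ and a flat $X$ with $\rk(X)\ge t$ satisfy $|\D[X]|>1$, with $\D[X]$ $r$-rank-spread above $X$. Since $X$ is not admissible, there is $F\in\T_i\setminus\T_i[X]$ with $\rk(F\wedge X)<t$. Each $S\in\D[X]$ then contains an atom $e\in\tau(F)\setminus\tau(X)$, so
\[
 |\D[X]|\le\sum_{e\in\tau(F)\setminus\tau(X)}|\D[X\vee e]|<\frac{|\tau(F)|}{r}|\D[X]|.
\]
The only new point is to bound $|\tau(F)|$ by $q$ rather than by $m_i$. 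Every current centre $F$ is a subflat of some original member $A\in\A$, as recorded in the invariant that every current centre remains a subflat of an original member. Hence $\tau(F)\subseteq\tau(A)$ and $|\tau(F)|\le q$. This gives $|\D[X]|<(q/r)|\D[X]|\le|\D[X]|$ since $r>q$, a contradiction.

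One degenerate case needs care: if $\tau(F)\setminus\tau(X)$ is empty, the sum is zero and we again contradict $|\D[X]|>1$. The hypothesis $r>1$ is only needed so that the spread definition makes sense. For \eqref{eq:uniform-parameter-Wi}, I would repeat part~(3) with $r$ in place of $r_i$. Fix $F\in\W_i$, pigeonhole over the at most $\sbinom{i}{t}$ rank-$t$ subflats of $F$ via Lemma~\ref{lem:number-subflats}, and obtain $T$ with $|\W_i[T]|\ge|\W_i|/\sbinom{i}{t}$. Members of $\W_i[T]$ have rank $i=\rk(T)+(i-t)$, so if $|\W_i[T]|>r^{i-t}$, Lemma~\ref{lem:rank-spread-extraction} yields a flat $Y\ge T$ (so $\rk(Y)\ge t$) with $|\W_i[Y]|>1$ and $\W_i[Y]$ $r$-rank-spread, contradicting the new part~(2).

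The only genuine obstacle is justifying $\tau(F)\subseteq\tau(A)$ for some $A\in\A$, that is, that centres never escape below the original family. I would check this directly from the replacement rule. Any new centre $X$ is a strict subflat of a current member, which by induction is below some $A\in\A$, and pruning only removes members.
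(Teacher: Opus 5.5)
Your proposal is correct and is essentially the paper's own proof. The replacement and pruning invariants do not depend on the spread parameter. Part~(2) is rerun with $|\tau(F)|\le q$ in place of $m_i$, which holds because every current centre is a subflat of an original member. Part~(3) is rerun with Lemma~\ref{lem:rank-spread-extraction} at the fixed parameter $r$.

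One small imprecision: you say $r>1$ is needed only for the definition to make sense. The extraction lemma also uses $r\ge1$, in the step $r^{\rk(Y)-\rk(X)}\le r^d$, but this is harmless here.
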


\begin{proof}
Every current centre is a subflat of an original member and therefore has at
most $q$ atoms.  Repeat the proof of Lemma~\ref{lem:partition-peeling}(2).
If an $r$-rank-spread link $\D[X]$ survived at a terminal stage, failure of
admissibility would provide a current flat $F$ with
$\rk(F\wedge X)<t$.  The same atomicity argument gives
\[
 |\D[X]|
 \le\sum_{e\in\tau(F)\setminus\tau(X)}|\D[X\vee e]|
 <\frac{q}{r}|\D[X]|,
\]
which is impossible because $r>q$.  The proof of part~(3), now using
Lemma~\ref{lem:rank-spread-extraction} with the fixed parameter $r$, gives
\eqref{eq:uniform-parameter-Wi}.  The state and coverage invariants do not
depend on the value of the spread parameter.
\end{proof}

Stopping the procedure at rank $s+t$ gives the following form used below.

\begin{corollary}[Peeling decomposition]
\label{cor:peeling-decomposition}
For $0\le s<k-t$, there is a $t$-intersecting family $\C_{s+t}$ of flats of
rank at most $s+t$ such that
\[
 \A=\A[\C_{s+t}]\cup\R_s,
 \qquad
 |\R_s|\le
 \sum_{i=s+t+1}^k \sbinom{i}{t}r_i^{i-t}E_i.
\]
Here one may take $\R_s=\A\setminus\A[\C_{s+t}]$; by the cover invariant it
is contained in the union of the extension families assigned to the peeled
layers $\W_{s+t+1},\ldots,\W_k$.
\end{corollary}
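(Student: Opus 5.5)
The plan is to run the peeling procedure of Lemma~\ref{lem:partition-peeling} for the stages $i=k,k-1,\ldots,s+t+1$ and then stop. The corollary should be nothing more than bookkeeping on top of that lemma.

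\emph{Choice of $\C_{s+t}$.} After stage $s+t+1$ has terminated, I take $\C_{s+t}$ to be the family defined in the procedure, namely $\C_{s+t}=\T_{s+t+1}\setminus\W_{s+t+1}$. By construction, every member of $\C_{s+t}$ has rank at most $s+t$: members of $\C_i$ have rank at most $i$, replacements only lower ranks, and the rank-$(s+t+1)$ members are the ones removed into $\W_{s+t+1}$. Lemma~\ref{lem:partition-peeling}(1), applied with $i=s+t+1$, says that $\T_{s+t+1}$ is a $t$-intersecting antichain. Hence its subfamily $\C_{s+t}$ is $t$-intersecting. The condition $s<k-t$ gives $s+t+1\le k$, so at least one stage is actually carried out, and every index used lies in the range $t+1\le i\le k$ where the lemma applies.

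\emph{Cover invariant.} Equation \eqref{eq:peeling-cover-invariant} with $i=s+t+1$ gives
\[
\A=\A[\C_{s+t}]\cup\bigcup_{j=s+t+1}^k\A[\W_j].
\]
Put $\R_s:=\A\setminus\A[\C_{s+t}]$. Then $\A=\A[\C_{s+t}]\cup\R_s$ holds trivially, and the display shows $\R_s\subseteq\bigcup_{j}\A[\W_j]$.

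\emph{Size bound.} For each flat $W\in\W_j$ we have $\rk(W)=j$, so
\[
|\A[W]|\le|\{F\in\F_k(M_n):W\le F\}|=E_j
\]
by Lemma~\ref{lem:extension-count}. Combining this with the layer bound \eqref{eq:Wi-bound}, $|\W_j|\le\sbinom{j}{t}r_j^{j-t}$, a union bound gives
\[
|\R_s|\le\sum_{j=s+t+1}^k|\W_j|\,E_j\le\sum_{j=s+t+1}^k\sbinom{j}{t}r_j^{j-t}E_j.
\]

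\emph{Where care is needed.} There is no real obstacle here. The only point to check is the indexing convention: the stopped family is named $\C_{s+t}$, corresponding to $\C_{i-1}$ with $i=s+t+1$. One must also note that the cover invariant, as established in Lemma~\ref{lem:partition-peeling}(1), holds at the terminal point of each stage independently of later stages, so it is legitimate to stop the procedure at that stage.
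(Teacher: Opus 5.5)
Your proposal is correct and is essentially the paper's own proof. You stop the peeling after stage $s+t+1$, read off the cover identity \eqref{eq:peeling-cover-invariant}, bound each $|\A[W]|$ by $E_j$ using Lemma~\ref{lem:extension-count}, and sum with the layer bound \eqref{eq:Wi-bound}; your extra checks on the indexing and on the rank bound for $\C_{s+t}$ agree with how the procedure is defined.
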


\begin{proof}
Use~\eqref{eq:peeling-cover-invariant}, then bound the extensions of each
$W\in\W_i$ by $E_i$ and apply~\eqref{eq:Wi-bound}.
\end{proof}

\subsection{Localized rank-two covers}

We first record the finite rank-two configuration used below.

\begin{lemma}[Nontrivial rank-two families]
\label{lem:rank-two-five}
Let $\C\subseteq\F_2(M_n)$ be intersecting and suppose that its members have
no common atom.  Then $|\C|\le5$.  Equality holds only, up to relabelling, for
\[
 \{\eps_{12}\vee\eps_{34},\
   \kk(\{1,2,3\}),\kk(\{1,2,4\}),
   \kk(\{1,3,4\}),\kk(\{2,3,4\})\}.
\]
\end{lemma}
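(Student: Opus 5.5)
Rank-two flats of $M_n$ come in exactly two shapes: a triangle $\kk(\{x,y,z\})$, with three atoms, and a double edge $\eps_{xy}\vee\eps_{zw}$ with $\{x,y\}\cap\{z,w\}=\varnothing$, with two atoms. Two members of $\C$ intersect precisely when their atom sets $\tau(\cdot)$ share an edge. So I will translate the statement into a statement about a family of edge sets of size two or three, each either a triangle or a matching of size two, such that any two share an edge and no edge is common to all. The plan is a short case analysis on whether $\C$ contains a double edge.

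\emph{Case 1: $\C$ contains a double edge $D=\eps_{12}\vee\eps_{34}$.}
\begin{itemize}
\item Every member meets $D$, so it contains $\eps_{12}$ or $\eps_{34}$.
\item Let $\C_1$ be the members containing $\eps_{12}$ but not $\eps_{34}$, and $\C_2$ the members containing $\eps_{34}$ but not $\eps_{12}$. The only flat containing both edges is $D$ itself, since a rank-two flat containing two disjoint edges equals their join.
\item Because there is no common atom, both $\C_1$ and $\C_2$ are nonempty.
\item A member $P\in\C_1$ and a member $Q\in\C_2$ must share an edge $e\ne\eps_{12},\eps_{34}$. Since $P\ni\eps_{12}$, the edge $e$ touches $\{1,2\}$, so $P$ is a triangle $\{1,2,x\}$ or a double edge $\{12,xy\}$. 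Likewise $e$ touches $\{3,4\}$.
\item If $P$ is a double edge $\{12,e\}$, then $e$ is disjoint from $12$, and $e$ lies in $Q$ together with $34$. This forces $e=34$ or, when $Q$ is a triangle $\{3,4,z\}$, an edge $3z$ or $4z$ disjoint from $12$. So $P=\{12,3z\}$ with $z\notin\{1,2\}$, and $Q=\{3,4,z\}$.
\item If $P=\kk(\{1,2,x\})$, then the shared edge is $1x$ or $2x$, and it must lie in $Q\ni 34$. This forces $x\in\{3,4\}$, or $Q$ a double edge $\{34,1x\}$ or $\{34,2x\}$.
\end{itemize}
I will enumerate these options and check pairwise intersection within $\C_1$, within $\C_2$, and across. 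Members of $\C_1$ pairwise share $12$, so they are automatically compatible, and the same holds for $\C_2$ with $34$. The cross condition is therefore the whole constraint.

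The expected outcome is a bipartite-compatibility bound: every $P\in\C_1$ must meet every $Q\in\C_2$. The cross pattern allowing the most members is $\C_1=\{\kk(123),\kk(124)\}$ and $\C_2=\{\kk(134),\kk(234)\}$, giving $|\C|=5$. Any use of a double edge or an outside vertex $z$ pins down the opposite side to a single member. For example, if $P=\{12,3z\}$, then every $Q\in\C_2$ must contain $3z$, so $Q=\kk(\{3,4,z\})$, and symmetrically $\C_1$ is then restricted to at most two members. I will count these subcases and check that each gives at most $4$.

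\emph{Case 2: all members are triangles.} Two triangles intersect iff they share two vertices. A family of $3$-sets pairwise sharing two points either has all members containing a fixed pair, which is a common atom and so excluded, or lies inside a fixed $4$-set. Then $|\C|\le 4<5$.

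The main obstacle is the bookkeeping in Case 1 to get strict inequality, and hence uniqueness, outside the displayed configuration. I will handle it by fixing which vertex set the members of $\C_1$ use beyond $\{1,2\}$ and showing that any outside vertex $z$, or any second double edge, forces the configuration into at most $4$ members.
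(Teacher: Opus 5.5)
Your decomposition matches the paper's. The all-triangle case is handled exactly as there. With a matching flat $D=\eps_{12}\vee\eps_{34}$, you split the remaining members into $\C_1$ (through $\eps_{12}$) and $\C_2$ (through $\eps_{34}$), both nonempty. The gap is the count in Case~1. You state the outcome and defer it to an enumeration, and the sketch of that enumeration is wrong in two places.
\begin{enumerate}
\item When $P=\eps_{12}\vee e$ is a matching flat, you conclude $Q=\kk(\{3,4,z\})$. This overlooks $Q=\eps_{34}\vee e$ with $e$ vertex-disjoint from $\{1,2,3,4\}$, e.g.\ $P=\eps_{12}\vee\eps_{56}$ and $Q=\eps_{34}\vee\eps_{56}$.
\item The claim that an outside vertex pins the opposite side to a single member is false. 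The family $\{\eps_{12}\vee\eps_{34},\kk(\{1,2,5\}),\eps_{34}\vee\eps_{15},\eps_{34}\vee\eps_{25}\}$ is intersecting, has no common atom, and has two members through $\eps_{34}$.
\end{enumerate}
Both configurations still have at most four members, so the bound survives. As written, however, nothing in the proposal actually proves $|\C_1|,|\C_2|\le2$.

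The missing idea is hidden in your remark that members of $\C_1$ ``pairwise share $12$, so they are automatically compatible''. They share \emph{only} $\eps_{12}$, since two distinct rank-two flats meet in rank at most one. Hence the petals $\tau(P)\setminus\{\eps_{12}\}$, $P\in\C_1$, are pairwise disjoint. Fix $Q\in\C_2$. Any atom common to $Q$ and $P\in\C_1$ is neither $\eps_{12}$ nor $\eps_{34}$. So this atom lies in the petal of $P$ and in $\tau(Q)\setminus\{\eps_{34}\}$, which has at most two atoms. Because the petals are disjoint, this gives $|\C_1|\le2$; symmetrically $|\C_2|\le2$, so $|\C|\le5$ with no enumeration. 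At equality, each petal on one side meets both disjoint petals on the other, so all four petals have two atoms. Thus all four members are triangles $\kk(\{1,2,u\}),\kk(\{1,2,v\}),\kk(\{3,4,p\}),\kk(\{3,4,q\})$, and cross-intersection forces $\{u,v\}=\{3,4\}$ and $\{p,q\}=\{1,2\}$. This is precisely the paper's argument.
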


\begin{proof}
A rank-two flat is either a triangle $\kk(\{x,y,z\})$ or the join of two
disjoint atoms, corresponding to a two-edge matching.

If every member is a triangle, identify it with its three graph vertices.
Two members intersect as flats exactly when the corresponding triples share
two vertices.  Take two distinct triples, say $123$ and $124$.  If the edge
$12$ is not common to the whole family, a further triple must be either $134$
or $234$; after these choices there are no other possibilities.  Hence this
case has at most four members.

Now suppose that $M\in\C$ is a matching flat with
$\tau(M)=\{x,y\}$.  Every other member contains $x$ or $y$, but not both.
Let $\Pcal$ be the members containing $x$ and let $\T$ be those containing
$y$, with $M$ omitted from both.  Nontriviality makes both subfamilies
nonempty.  The petals
\[
 \{\tau(P)\setminus\{x\}:P\in\Pcal\}
 \quad\text{and}\quad
 \{\tau(T)\setminus\{y\}:T\in\T\}
\]
are collections of nonempty sets of size at most two.  Within either
collection the petals are pairwise disjoint, while every petal in the first
collection meets every petal in the second.  Hence
$|\Pcal|,|\T|\le2$, and $|\C|\le1+2+2=5$.

At equality all four petals have size two.  Relabel the two disjoint edges of
$M$ as $12$ and $34$.  The two members through $12$ are triangles
$\kk(\{1,2,u\})$ and $\kk(\{1,2,v\})$, and the two through $34$ are
$\kk(\{3,4,p\})$ and $\kk(\{3,4,q\})$.  Cross-intersection forces
$\{u,v\}=\{3,4\}$ and $\{p,q\}=\{1,2\}$, giving exactly the displayed
family.
\end{proof}

Throughout this subsection, $k\ge3$, $\A\subseteq\F_k(M_n)$ is a nontrivial
intersecting family, and
\[
  m:=\binom{k+1}{2}.
\]
Define
\[
 \Psi_k(b):=\sum_{i=3}^k
 \frac{m_i(m_i+1)^{i-1}}{b^{i-2}}.
\]

\begin{lemma}[Rank-two peeling approximation]
\label{lem:rank-two-peeling}
There is an intersecting antichain $\C\subseteq\F_1(M_n)\cup\F_2(M_n)$ and
a remainder $\R\subseteq\A$ such that
\begin{equation}
\label{eq:rank-two-peeling-cover}
 \A=\A[\C]\cup\R,
 \qquad |\R|\le\Psi_k(b)E_2.
\end{equation}
If $\C$ has no common atom, then $\C\subseteq\F_2(M_n)$ and $|\C|\le5$.
If $\C$ has a common atom $a$, every member of $\A\setminus\A[a]$ lies in
$\R$.
\end{lemma}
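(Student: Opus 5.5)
We will run the peeling procedure of Lemma~\ref{lem:partition-peeling} with $t=1$, stopping after stage $i=3$. This is Corollary~\ref{cor:peeling-decomposition} with $s=1$: it produces an intersecting antichain $\C:=\C_2$ of flats of rank at most $2$. Every member of $\C$ has rank at least $t=1$, because admissible centres have rank $\ge t$ and original members have rank $k\ge3$. Hence $\C\subseteq\F_1(M_n)\cup\F_2(M_n)$. We then put $\R:=\A\setminus\A[\C]$. By the cover invariant~\eqref{eq:peeling-cover-invariant}, $\R$ is contained in $\bigcup_{i=3}^k\A[\W_i]$.

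For the bound on $|\R|$, we use the two estimates
\[
|\W_i|\le\sbinom{i}{1}r_i^{i-1}
\qquad\text{and}\qquad
\sbinom{i}{1}=S(i+1,i)=\binom{i+1}{2}=m_i,
\]
the first from~\eqref{eq:Wi-bound}. Lemma~\ref{lem:exact-extension-ratio} gives $E_i\le E_2/b^{i-2}$ for $i\ge2$. Summing over the layers,
\[
|\R|\le\sum_{i=3}^k m_i(m_i+1)^{i-1}E_i\le\Psi_k(b)E_2.
\]
This part is routine.

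Now suppose $\C$ has no common atom. If $\C$ contained an atom $e$, every other member of $\C$ would have to intersect $e$, that is, lie above $e$. Since $\C$ is an antichain, this forces $\C=\{e\}$, and then $e$ is a common atom, a contradiction. So $\C\subseteq\F_2(M_n)$, and Lemma~\ref{lem:rank-two-five} gives $|\C|\le5$.

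Finally suppose $\C$ has a common atom $a$, so $a\le C$ for every $C\in\C$. Then $\A[C]\subseteq\A[a]$ for every $C\in\C$, hence $\A[\C]\subseteq\A[a]$. Every $A\in\A\setminus\A[a]$ therefore lies outside $\A[\C]$, and so lies in $\R$ by definition.

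The only point needing care is the nonemptiness and rank bookkeeping of the peeling output, in particular ensuring that $\C$ is nonempty so that "common atom" makes sense. If $\C$ were empty, the dichotomy in the statement should be read with the empty case absorbed: either it is treated as vacuous, or $\A=\R$ and the second alternative holds trivially for any choice of $a$. I expect this degenerate case, rather than any estimate, to be the main thing to state carefully.
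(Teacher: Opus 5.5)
Your proof is correct and follows the paper's route: peeling with $t=1$, the cover invariant together with \eqref{eq:Wi-bound} and Lemma~\ref{lem:exact-extension-ratio} for the remainder, and the antichain argument plus Lemma~\ref{lem:rank-two-five} in the no-common-atom case. There are two minor differences. You stop at $\C_2$ instead of also running stage two to obtain $\T_2$. You also derive the common-atom clause directly from the monotonicity $\A[\C]\subseteq\A[a]$, whereas the paper uses terminality of $\T_2$ (admissibility of $a$) to prove the stronger fact $\C=\{a\}$. That stronger fact is not needed for the statement, nor for its use in the proof of Theorem~\ref{thm:HM}.
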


\begin{proof}
Run the peeling procedure through stage two, and let
\[
 \C:=\T_2=\C_1\cup\W_2,
 \qquad
 \R:=\A\setminus\A[\C].
\]
The cover invariant in Lemma~\ref{lem:partition-peeling}(1) gives
\[
 \A=\A[\C]\cup\R,
 \qquad
 \R\subseteq\bigcup_{i=3}^k\A[\W_i].
\]
Since $\sbinom{i}{1}=m_i$, Lemma~\ref{lem:partition-peeling}(3) and repeated use
of Lemma~\ref{lem:exact-extension-ratio} yield
\[
 |\R|\le\sum_{i=3}^k m_i(m_i+1)^{i-1}E_i
 \le \Psi_k(b)E_2,
\]
which proves~\eqref{eq:rank-two-peeling-cover}.

The terminal family $\C=\T_2$ is an intersecting antichain of flats of rank
at most two.  Suppose first that its members have a common atom $a$.  If
$\C\ne\{a\}$, then $a$ is a strict subflat of at least one current member,
and every member outside $\C[a]$ is absent.  Thus $a$ is admissible at stage
two, contradicting the terminality of $\T_2$.  Hence $\C=\{a\}$.  In this
case $\A[\C]=\A[a]$, so every member of $\A\setminus\A[a]$ lies in $\R$.

If $\C$ has no common atom, it contains no rank-one flat: an atom belonging
to an intersecting antichain forces every other member to contain that atom.
Consequently $\C\subseteq\F_2(M_n)$, and
Lemma~\ref{lem:rank-two-five} gives $|\C|\le5$.
\end{proof}

Fix an atom $a$ and a member $C\in\A$ with $a\nleq C$.  The analogue of the
Wen--Lv cover family is
\[
 \Kcal(a,C):=\{a\vee e:e\in\tau(C)\}\subseteq\F_2(M_n).
\]
It has at most $m$ members, and it covers $\A[a]$: if $F\in\A[a]$, choose an
atom $e\le F\wedge C$ and observe that $a\vee e\le F$.

\begin{lemma}[Localized high covers]
\label{lem:localized-high-covers}
Let
\begin{equation}
\label{eq:localized-high-family}
 \Hcal(a,C):=\{B\in\Kcal(a,C):|\A[B]|>m^2E_3\}.
\end{equation}
Every $B\in\Hcal(a,C)$ intersects every member of $\A$.  Moreover,
\begin{equation}
\label{eq:localized-cover-bound}
 |\A[a]|\le |\Hcal(a,C)|E_2+m^3E_3.
\end{equation}
If $|\Hcal(a,C)|=m$, then $C$ is a rank-$k$ clique flat and
\[
 \Hcal(a,C)=\Kcal(a,C)=\{a\vee e:e\in\tau(C)\}.
\]
\end{lemma}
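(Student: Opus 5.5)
Three claims need proof: each heavy cover meets every member of $\A$, the bound \eqref{eq:localized-cover-bound}, and the characterization of the case $|\Hcal(a,C)|=m$.

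\emph{Heavy covers meet everything.}  Take $B\in\Hcal(a,C)$ and suppose some $G\in\A$ has $\rk(B\wedge G)=0$.  Every $F\in\A[B]$ must still intersect $G$, so $F\wedge G$ contains an atom $e\le G$.  This atom is not below $B$, since otherwise $e\le B\wedge G$.  Hence $F\ge B\vee e$, and $B\vee e$ has rank three.  Since $\rk(G)=k$, Lemma~\ref{lem:number-atoms} gives $|\tau(G)|\le m$, so there are at most $m$ choices of $e$.  Each flat $B\vee e$ has at most $E_3$ extensions by Lemma~\ref{lem:extension-count}.  Therefore
\[
|\A[B]|\le mE_3\le m^2E_3,
\]
contradicting $B\in\Hcal(a,C)$.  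The inequality is deliberately weak, with a factor $m$ to spare.

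\emph{The cover bound.}  As noted before the lemma, $\Kcal(a,C)$ covers $\A[a]$.  Split it into $\Hcal(a,C)$ and $\Kcal(a,C)\setminus\Hcal(a,C)$.
\begin{itemize}
\item Each heavy $B$ contributes at most $|\A[B]|\le E_2$.
\item There are at most $m$ light $B$, each contributing at most $m^2E_3$.
\end{itemize}
Summing gives $|\A[a]|\le|\Hcal(a,C)|E_2+m^3E_3$.

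\emph{The case $|\Hcal(a,C)|=m$.}  The map $e\mapsto a\vee e$ is injective on $\tau(C)$ because $a\nleq C$.  Indeed, if $a\vee e=a\vee e'$ with $e\ne e'$, then the rank-two flat $a\vee e$ would contain three atoms $a,e,e'$ and so be a triangle.  A triangle through $e$ and $e'$ is $e\vee e'\le C$, which would force $a\le C$, a contradiction.  So $|\Kcal(a,C)|=|\tau(C)|\le m$.  Now
\[
m=|\Hcal(a,C)|\le|\Kcal(a,C)|=|\tau(C)|\le m
\]
forces equality throughout.  Equality in Lemma~\ref{lem:number-atoms} makes $C$ a rank-$k$ clique flat, and equality of the two families gives $\Hcal(a,C)=\Kcal(a,C)$.

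The only delicate point is the injectivity of $e\mapsto a\vee e$, and it follows from the triangle observation above.
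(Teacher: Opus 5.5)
Your proof is correct. The cover bound and the equality case follow the paper, but your proof that heavy covers meet every member of $\A$ takes a different and more economical route.

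The paper fixes a heavy $B$ and greedily extracts $m+1$ members $F_1,\ldots,F_{m+1}$ of $\A[B]$ whose petals $\tau(F_i)\setminus\tau(B)$ are pairwise disjoint. The threshold $m^2E_3$ is exactly what lets it avoid the at most $m^2$ atoms of earlier petals. It then observes that a member of $\A$ missing $B$ would need $m+1$ distinct atoms.

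You instead fix the offending $G$ and use the atom-witness argument the paper already uses in Lemma~\ref{lem:rank-spread-EKR}: $\A[B]\subseteq\bigcup_{e\in\tau(G)}\A[B\vee e]$, hence $|\A[B]|\le mE_3$. This shows that the cover property already holds above the threshold $mE_3$. With that threshold in the definition of $\Hcal(a,C)$, the error term in \eqref{eq:localized-cover-bound} would drop from $m^3E_3$ to $m^2E_3$. That saves one factor of $m$ from the loss the paper's closing remark attributes to building $m+1$ disjoint petals. The final $O(k^6)$ threshold would not improve automatically, though: the crude estimate of $\Psi_k(b)$ in Lemma~\ref{lem:numerical-remainder} is also of order $m^3/b$ and would need sharpening as well. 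The sunflower route buys only a bounded-size witness subfamily of $\A[B]$, which is not needed anywhere in the argument.

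Finally, your injectivity argument for $e\mapsto a\vee e$ is correct but unnecessary. The inequality $|\Kcal(a,C)|\le|\tau(C)|$ holds trivially because $\Kcal(a,C)$ is the image of $\tau(C)$. The chain $m=|\Hcal(a,C)|\le|\Kcal(a,C)|\le|\tau(C)|\le m$ then forces every equality, so that step is not actually the delicate point.
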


\begin{proof}
Fix $B\in\Hcal(a,C)$.  Greedily choose $m+1$ members
$F_1,\ldots,F_{m+1}$ of $\A[B]$ with
\[
 \tau(F_i)\cap\tau(F_j)=\tau(B)\qquad(i\ne j).
\]
After $j\le m$ choices, the union of the old petals contains at most
$jm\le m^2$ atoms.  For each petal atom $x$, the flat $B\vee x$ has rank
three and has $E_3$ rank-$k$ extensions.  Thus at most $m^2E_3$ members of
$\A[B]$ meet an old petal, and the strict inequality in
\eqref{eq:localized-high-family} permits the next choice.

If some $A\in\A$ did not intersect $B$, then, in order to intersect all
$F_i$, the atom set $\tau(A)$ would contain one atom from each of the $m+1$
pairwise disjoint petals.  This contradicts $|\tau(A)|\le m$.  Hence every
high $B$ is a $1$-cover of $\A$.

The at most $m$ low candidates contribute at most
$m\cdot m^2E_3=m^3E_3$, while each high candidate has at most $E_2$
extensions.  This proves~\eqref{eq:localized-cover-bound}.  Finally,
$|\Kcal(a,C)|\le|\tau(C)|\le m$.  Equality in
$|\Hcal(a,C)|\le|\Kcal(a,C)|\le|\tau(C)|\le m$ forces
$|\tau(C)|=m$, so Lemma~\ref{lem:number-atoms} gives the last assertion.
\end{proof}

\begin{lemma}
\label{lem:numerical-remainder}
If $k\ge3$ and
\[
 b\ge3\left(m^3+\binom m2\right),
\]
then
\begin{equation}
\label{eq:numerical-HM-gap}
 \frac{m^3+\binom m2}{b}+\Psi_k(b)<1.
\end{equation}
\end{lemma}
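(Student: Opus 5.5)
Write $Q:=m^3+\binom m2$, so the hypothesis reads $b\ge3Q$, and the first term of \eqref{eq:numerical-HM-gap} is at most $1/3$. It therefore suffices to show $\Psi_k(b)<2/3$. Since every summand of $\Psi_k(b)$ is decreasing in $b$, we may replace $b$ by $3Q$ throughout and bound the resulting finite sum.

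The plan is to compare each term of $\Psi_k$ with a geometric series. For $3\le i\le k$ we have $m_i\le m$, so
\[
 \frac{m_i(m_i+1)^{i-1}}{b^{i-2}}\le \frac{m(m+1)^{i-1}}{b^{i-2}}
 = m(m+1)\Bigl(\frac{m+1}{b}\Bigr)^{i-2}.
\]
With $b\ge3Q\ge3m^3$ and $k\ge3$ (so $m\ge6$), the ratio satisfies $q:=(m+1)/b\le (m+1)/(3m^3)$, which is tiny. Summing the geometric series over $i\ge3$ gives
\[
 \Psi_k(b)\le m(m+1)\frac{q}{1-q}.
\]
The leading term is $m(m+1)^2/b\le m(m+1)^2/(3m^3)=(1+1/m)^2/3$, and this is already about $0.45$ when $m=6$.

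The main obstacle is that this estimate is not trivially below $2/3$ once the geometric tail factor $1/(1-q)$ is included, though it is close. I would first check that $(1+1/m)^2/3\cdot 1/(1-q)<2/3$ for all $m\ge6$. This holds because $(7/6)^2/3\approx0.454$ and $q\le7/648$, so the slack is ample. I would also use the extra $\binom m2$ in $Q$ to strengthen the bound: $b\ge 3(m^3+\binom m2)$ makes $m(m+1)^2/b$ strictly smaller.

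Combining the two parts gives
\[
 \frac{Q}{b}+\Psi_k(b)\le \frac13+0.46<1.
\]
All inequalities are uniform in $k\ge3$, because the bound depends only on $m\ge6$ and is decreasing in $m$. This monotonicity is easy to verify, since both $(1+1/m)^2$ and $q$ decrease as $m$ grows. If the paper wants exact constants, I would record the case $m=6$ as the worst case and note the monotonicity.
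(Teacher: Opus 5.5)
Your argument is correct and is essentially the paper's proof. The paper also bounds each summand by $m(m+1)^{i-1}/b^{i-2}$ and uses only $b\ge3m^3$ to sum a geometric series with ratio at most $7/(18m^2)$. It obtains $\Psi_k(b)\le\frac{49}{108}\bigl(1-\frac{7}{18m^2}\bigr)^{-1}<\frac12$, hence a total below $\frac13+\frac12$. Two of your remarks are unnecessary: the slack to $2/3$ is not ``close'' (your own value $\approx0.46$ shows it is ample), and the extra $\binom m2$ in the hypothesis on $b$ is never needed.
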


\begin{proof}
Put $Q=m+1$.  For $3\le i\le k$, one has $m_i\le m$ and
$m_i+1\le Q$.  Since $m\ge6$, also $Q\le7m/6$.  The hypothesis implies
$b\ge3m^3$, and hence
\begin{align*}
 \Psi_k(b)
 \le \frac{mQ^2}{b}
       \sum_{j=0}^{k-3}\left(\frac Qb\right)^j<\frac{49}{108}\left(1-\frac7{18m^2}\right)^{-1}
 <\frac12.
\end{align*}
The first term in~\eqref{eq:numerical-HM-gap} is at most $1/3$, so the sum is
strictly smaller than $5/6$.
\end{proof}

\begin{remark}
The proof of Theorem~\ref{thm:HM} below uses the closed-form hypothesis only
through~\eqref{eq:numerical-HM-gap}.  Thus
\[
 \frac{m^3+\binom m2}{b}+\Psi_k(b)<1
\]
is a sharper, directly computable sufficient condition for the same
conclusion.
\end{remark}
\section{The general EKR and Hilton--Milner theorems}
\label{sec:proofs}

\subsection{The general \texorpdfstring{$t$}{t}-intersection theorem}
\label{sec:t-ekrproof}

The rank-peeling proof is most naturally stated through the following
explicit dimensionless quantity:
\[
 \Phi_{k,t}(b):=
 \frac{m_k}{b}+
 \sum_{i=t+1}^k \sbinom{i}{t}
       \left(\frac{m_i+1}{b}\right)^{i-t}.
\]

\begin{lemma}
\label{lem:rank-spread-EKR}
If $\Phi_{k,t}(b)<1$, then every nontrivial $t$-intersecting family
$\A\subseteq\F_k(M_n)$ has size strictly smaller than $E_t$.
Moreover,
\begin{equation}
\label{eq:quadratic-EKR-sufficient}
 b\ge c_t(m_k+1)
 \quad\Longrightarrow\quad
 \Phi_{k,t}(b)<1.
\end{equation}
\end{lemma}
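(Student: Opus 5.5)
Apply the Peeling decomposition (Corollary~\ref{cor:peeling-decomposition}) with $s=0$. This writes
\[
\A=\A[\C_t]\cup\R_0,\qquad |\R_0|\le\sum_{i=t+1}^k\sbinom{i}{t}r_i^{i-t}E_i,
\]
where $\C_t$ is a $t$-intersecting family of flats of rank at most $t$. Since $t$-intersection forces every member of $\C_t$ to have rank exactly $t$, and any two members must meet in rank $t$, $\C_t$ consists of a single rank-$t$ flat $T$ (or is empty). Indeed, two distinct rank-$t$ flats meet in rank $<t$, and $\C_t$ is an antichain.

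Hence $\A\subseteq\A[T]\cup\R_0$. Iterating Lemma~\ref{lem:exact-extension-ratio} gives $E_i\le E_t/b^{i-t}$, so
\[
|\R_0|\le E_t\sum_{i=t+1}^k\sbinom{i}{t}\bigl(r_i/b\bigr)^{i-t}.
\]
If $\C_t$ is empty, the whole family is this remainder and the bound is already $<E_t$.

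Nontriviality is used to shave the star part. Because $\A$ is nontrivial, some $C\in\A$ does not contain $T$. Every $F\in\A[T]$ satisfies $\rk(F\wedge C)\ge t$, while $\rk(T\wedge C)<t$. By the atomicity argument in the proof of Lemma~\ref{lem:partition-peeling}(2), $F\wedge C$ then contains an atom $e\nleq T$, so $F\ge T\vee e$, which has rank $t+1$. The flat $C$ has at most $m_k$ atoms (Lemma~\ref{lem:number-atoms}), so
\[
|\A[T]|\le m_kE_{t+1}\le (m_k/b)E_t.
\]
Adding the two bounds gives $|\A|\le\Phi_{k,t}(b)E_t<E_t$, which is the first assertion.

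For the implication \eqref{eq:quadratic-EKR-sufficient}, set $x=(m_k+1)/b\le1/c_t$. Then $m_k/b<x$. In the sum use $m_i+1\le m_k+1$, so each term is at most $\sbinom{i}{t}x^{i-t}$, and bound the Stirling number crudely, e.g.\ $\sbinom{i}{t}=S(i+1,i+1-t)\le\binom{i+1}{2}^t$ or some bound of the form $(i+1)^{2t}$. The delicate point is that $i$ ranges up to $k$, while $c_t$ depends only on $t$, so the growth of $\sbinom{i}{t}$ in $i$ must be absorbed by the geometric decay in $x^{i-t}$. Here I would use $x\le 1/(c_t(m_i+1))$ for each term separately rather than the uniform $m_k$ bound: with $(m_i+1)/b\le (m_i+1)/(c_t(m_k+1))$, the $j=i-t\ge1$ term is at most $\sbinom{i}{t}/c_t^{\,j}$, since $m_i\le m_k$. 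We need $\sum_{j\ge1}\sbinom{t+j}{t}c_t^{-j}$ small. Using the bound $\sbinom{t+j}{t}\le\binom{t+j+1}{2}^t/t!\cdot(\text{const})$, polynomial in $j$ of degree $2t$, and $c_t=2(t+2)^{2t}(2t)!$, the $j=1$ term is about $\binom{t+2}{2}^t/c_t\le (t+2)^{2t}/c_t=1/(2(2t)!)$. Later terms decay geometrically because the ratio of consecutive terms is at most $((t+j+2)/(t+j+1))^{2t}/c_t\le 2^{2t}/c_t$.

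The main obstacle is this numerical step: checking that the total sum plus $m_k/b\le 1/c_t$ stays below $1$ with exactly the stated constant $c_t$, in particular treating $t=1$ and small $t$ without slack. I would verify it with a polynomial-times-geometric series estimate, deferred to the appendix-style computation.
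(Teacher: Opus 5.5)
Your proposal is correct and follows the paper's proof essentially step for step: peeling with $s=0$ down to a single centre $T$, bounding $\A[T]$ by $m_kE_{t+1}\le (m_k/b)E_t$ using an atom of $F\wedge C$ outside $T$, and reducing the remaining sum to $\sum_{j\ge1}\sbinom{t+j}{t}c_t^{-j}$. Your ratio estimate on the majorant $(t+j+1)^{2t}c_t^{-j}$ (first term $1/(2(2t)!)$, consecutive ratio at most $2^{2t}/c_t\le 1/9$) already finishes the numerics with ample room, giving at most $9/32+1/36<1$ in place of the paper's binomial-series bound $6/11+1/36$; what you still need is a justification of $\sbinom{i}{t}\le\binom{m_i}{t}\le(i+1)^{2t}$, for instance by sending each rank-$t$ flat to a canonical $t$-edge spanning forest, as the paper does.
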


\begin{proof}
Apply Corollary~\ref{cor:peeling-decomposition} with $s=0$.  A
$t$-intersecting family of flats of rank at most $t$ is either empty or
consists of a single rank-$t$ flat, say $T$.  Hence
\[
 \A=\A[T]\cup\R_0,
 \qquad
 |\R_0|\le\sum_{i=t+1}^k \sbinom{i}{t}r_i^{i-t}E_i,
\]
where the first term is omitted if the core is empty.

Suppose that the core is $\{T\}$ and that $\A$ is nontrivial.  Choose
$A_0\in\A$ with $T\nleq A_0$.  If $F\in\A[T]$, then
$\rk(F\wedge A_0)\ge t$, whereas $\rk(T\wedge A_0)<t$.  We claim that
$F\wedge A_0$ contains an atom $e\nleq T$.  Otherwise every atom below
$F\wedge A_0$ would lie below $T$.  Since a flat in a geometric lattice is
the join of its atoms, this would imply
\[
 F\wedge A_0\le T,
 \qquad\text{and hence}\qquad
 F\wedge A_0\le T\wedge A_0,
\]
contradicting the two rank inequalities.  Thus such an atom $e$ exists.
Consequently $T\vee e\le F$ and $\rk(T\vee e)=t+1$.
Lemma~\ref{lem:number-atoms} gives at most $m_k$ choices for $e$, and
therefore
\[
  |\A[T]|\le m_k E_{t+1}.
\]
Repeated use of Lemma~\ref{lem:exact-extension-ratio} gives
$E_i/E_t\le b^{-(i-t)}$.  The last two displays, also covering the empty-core
case, yield
\[
 \frac{|\A|}{E_t}
 \le \frac{m_k}{b}
 +\sum_{i=t+1}^k \sbinom{i}{t}
       \left(\frac{r_i}{b}\right)^{i-t}
 =\Phi_{k,t}(b).
\]
This proves the first assertion.

It remains to verify~\eqref{eq:quadratic-EKR-sufficient}.  Put
$Q=m_k+1$.  Clearly $r_i\le Q$.  Also
\begin{equation}
\label{eq:subflat-polynomial-bound}
 \sbinom{i}{t}\le\binom{m_i}{t}\le(i+1)^{2t}.
\end{equation}
Indeed, fix a total order on the $m_i$ edges of $K_{i+1}$ and assign to
each rank-$t$ flat its lexicographically first $t$-edge forest basis.  The
closure of the chosen forest recovers the flat, so this assignment injects
the rank-$t$ flats into the $t$-subsets of the $m_i$ edges.  This proves the
first inequality in
\eqref{eq:subflat-polynomial-bound}.  Lemma~\ref{lem:appendix-Phi-bound} applies
this polynomial estimate and the binomial series to the hypothesis
$b\ge c_t Q$, giving
\[
 \sum_{i=t+1}^k \sbinom{i}{t}
       \left(\frac{r_i}{b}\right)^{i-t}\le\frac6{11},
 \qquad
 \frac{m_k}{b}<\frac1{36}.
\]
Therefore $\Phi_{k,t}(b)<6/11+1/36<1$, proving
\eqref{eq:quadratic-EKR-sufficient}.
\end{proof}

\begin{proof}[Proof of Theorem~\ref{thm:EKR}]
If $\A$ is trivially $t$-intersecting, choose a $t$-flat
$T\le\meetall_{A\in\A}A$.  Then
\[
  \A\subseteq\E_k(T),
  \qquad
  |\A|\le\sbinom{n-t}{k-t}.
\]
Suppose that $\A$ is nontrivial.  Since the hypothesis is
$b\ge c_t(m_k+1)$,
Lemma~\ref{lem:rank-spread-EKR} gives $|\A|<E_t$.  Thus every nontrivial
family satisfies the desired strict inequality.  Since
$E_t=\sbinom{n-t}{k-t}$, the upper bound follows.
Thus equality is possible only in the trivial case, where equality in the
containment $\A\subseteq\E_k(T)$ forces $\A=\E_k(T)$.  Conversely, every
full $t$-star is $t$-intersecting and has the asserted size by
Lemma~\ref{lem:extension-count}.
\end{proof}

\begin{proof}[Proof of the lower-bound assertion in
Remark~\ref{rem:threshold-obstructions}]
It is enough to construct a counterexample at $n=2k-t$; then
$b=n+1-k=k-t+1$ and $n\ge k+1$ because $k>t$.
Choose subsets $Z,Y\subseteq\Om_n$ and distinct vertices
$0,1\in\Om_n\setminus(Z\cup Y)$ such that
\[
 |Z|=t,\qquad |Y|=k-t+1,\qquad |Z\cap Y|=1.
\]
This uses exactly $k+2$ vertices and is therefore possible.  Let $R$ be
the flat whose only possibly nonsingleton block is $Z$; thus $R$ is the
bottom flat when $t=1$, and $R=\kk(Z)$ when $t\ge2$.  Put
\[
 T=R\vee\eps_{01},\qquad
 C_i=\kk(Z\cup Y\cup\{i\})\quad(i=0,1).
\]
Then $\rk(R)=t-1$, $\rk(T)=t$, $\rk(C_i)=k$, and
\[
  C_0\wedge C_1=\kk(Z\cup Y)
\]
has rank $k-1\ge t$.

Consider
\[
  \A:=\E_k(T)\cup\{C_0,C_1\}.
\]
We claim that every $F\in\E_k(T)$ $t$-intersects both exceptional flats.
Contract $T$.  Among the resulting points, distinguish the point
corresponding to the block $Z$, the point corresponding to the block
$\{0,1\}$, and the $k-t$ points in $Y\setminus Z$.  There are
$k-t+2$ distinguished points, whereas the partition corresponding to $F$
has
\[
  b=n+1-k\le k-t+1
\]
blocks.  Hence two distinguished points lie in a common block.

If neither of these points is the image of $\{0,1\}$, the resulting extra
atom lies in both $C_0$ and $C_1$.  If one of them is the image of
$\{0,1\}$, closure in $F$ supplies both edges $0x$ and $1x$ for the other
point $x$; one is contained in $C_0$ and the other in $C_1$.  In either case
$F\wedge C_i$ contains $R$ together with an atom outside $R$, so
\[
  \rk(F\wedge C_i)\ge t\qquad(i=0,1).
\]
Thus $\A$ is $t$-intersecting.  Neither $C_i$ contains the edge $01$, so the
two exceptional flats lie outside $\E_k(T)$ and
\[
  |\A|=|\E_k(T)|+2=\sbinom{n-t}{k-t}+2.
\]
\end{proof}

\subsection{The Hilton--Milner theorem}

The following classification is the structural heart of the proof.  Recall
$m=\binom{k+1}{2}$.

\begin{lemma}
\label{lem:core-classification}
Let $a=\eps_{uv}$ be an atom, and let
$B_1,\ldots,B_m$ be distinct rank-two flats containing $a$.  Suppose that a
flat $C$ of rank at most $k$ does not contain $a$ but intersects every $B_i$.
Then $\rk(C)=k$, $C=\kk(X)$ for a $(k+1)$-set $X$, and, after reindexing,
\[
  \{B_1,\ldots,B_m\}=\{a\vee e:e\in\tau(C)\}.
\]
Moreover, exactly one of the following occurs.
\begin{enumerate}[label=\textup{(\arabic*)}]
\item $X\cap\{u,v\}=\varnothing$.  The only flat of rank at most $k$
      that avoids $a$ and intersects every $B_i$ is $C$.
\item $|X\cap\{u,v\}|=1$.  Writing, say,
      $X=\{u\}\cup Y$, the only such flats are
      $\kk(\{u\}\cup Y)$ and $\kk(\{v\}\cup Y)$.
\end{enumerate}
\end{lemma}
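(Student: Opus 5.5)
The plan is to show first that $C$ must contain one atom of each $B_i$, then read off the structure of $C$ from the equality case of Lemma~\ref{lem:number-atoms}. The two cases are handled by recovering the atom set of any admissible flat from the configuration $\{B_1,\ldots,B_m\}$.

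\emph{Step 1: an injection from the $B_i$ into $\tau(C)$.} The hypothesis that $C$ intersects $B_i$ means $\rk(C\wedge B_i)\ge1$. Since every flat is the join of its atoms, $C\wedge B_i$ contains some atom $e_i\le B_i$, and $e_i\ne a$ because $a\nleq C$. The flats $a$ and $e_i$ are distinct atoms below the rank-two flat $B_i$, so $a\vee e_i$ has rank two and lies below $B_i$. Hence $B_i=a\vee e_i$. Distinct $B_i$ therefore force distinct $e_i$, so $i\mapsto e_i$ injects $\{B_1,\ldots,B_m\}$ into $\tau(C)$, giving $|\tau(C)|\ge m=\binom{k+1}{2}$.

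\emph{Step 2: the structure of $C$.} Since $\rk(C)\le k$, Lemma~\ref{lem:number-atoms} gives $|\tau(C)|\le\binom{\rk(C)+1}{2}\le m$. So all of these inequalities are equalities. Thus $\rk(C)=k$, $C=\kk(X)$ with $|X|=k+1$, and the injection is a bijection, which gives $\{B_1,\ldots,B_m\}=\{a\vee e:e\in\tau(C)\}$. Because $a\nleq C$, the set $X$ cannot contain both $u$ and $v$, so exactly one of the two cases holds.

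\emph{Step 3: uniqueness in each case.} Let $C'$ be any flat of rank at most $k$ that avoids $a$ and intersects every $B_i$. Steps 1--2 apply to $C'$ as well, so $C'=\kk(X')$ with $|X'|=k+1$ and $\{a\vee e:e\in\tau(C')\}=\{a\vee e:e\in\tau(C)\}$. The atoms of $a\vee e$ are described as follows.
\begin{itemize}
\item If $e$ is disjoint from $uv$, then $a\vee e$ is a matching flat with atoms $\{a,e\}$.
\item If $e=uw$ or $e=vw$, then $a\vee e=\kk(\{u,v,w\})$ has atoms $a,uw,vw$.
\end{itemize}

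In case (1), every $B_i$ is a matching flat, so each atom of $C'$ is the unique non-$a$ atom of some $B_i$. This gives $\tau(C')=\tau(C)$, and hence $C'=C$.

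In case (2), write $X=\{u\}\cup Y$ with $|Y|=k$. The $B_i$ are the matching flats $a\vee yy'$ for $y,y'\in Y$ and the triangles $\kk(\{u,v,y\})$ for $y\in Y$.
\begin{itemize}
\item Each matching flat $a\vee yy'$ forces $yy'\in\tau(C')$, because its only atom other than $a$ is $yy'$. Hence $Y\subseteq X'$ when $k\ge2$.
\item The one remaining point $w$ of $X'$ must produce triangles and no new matchings. If $w\notin\{u,v\}$, the flat $a\vee wy$ would be a matching flat outside the configuration. So $w\in\{u,v\}$, which gives exactly $\kk(\{u\}\cup Y)$ or $\kk(\{v\}\cup Y)$.
\end{itemize}
Conversely, both of these flats avoid $a$ and realize the configuration. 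The degenerate case $k=1$ is checked directly. There $Y=\{y\}$, and the single $B_1=\kk(\{u,v,y\})$ is met by exactly the atoms $uy$ and $vy$ among flats of rank at most $1$ avoiding $a$.

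The main obstacle is this case-(2) recovery step: ruling out every possible placement of $X'$ by matching the types of the $B_i$ (matching versus triangle) against the atoms of $\kk(X')$.
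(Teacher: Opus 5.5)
Your proof is correct and follows essentially the same route as the paper. Both arguments take one atom of $C$ from each $B_i$ (the paper phrases this as a sunflower with kernel $a$), force $C=\kk(X)$ through the equality case of Lemma~\ref{lem:number-atoms}, and then identify any other admissible flat by reading off the petal types. The only cosmetic difference is in case (2): you pin down the extra vertex $w$ of $X'$ by ruling out the stray matching flat $a\vee wy$, whereas the paper argues that a mixed choice of $uy$ and $vy'$ would merge $u$, $v$, and $Y$ into one block.
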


\begin{proof}
The flats $B_i$ form a sunflower with kernel $a$: for distinct rank-two flats
through $a$, the petals
$\tau(B_i)\setminus\{a\}$ are nonempty and pairwise disjoint.  Since $C$ does
not contain $a$ but intersects every $B_i$, it contains at least one atom from
each of the $m$ petals.  Lemma~\ref{lem:number-atoms} gives
\[
  m\le|\tau(C)|\le\binom{\rk(C)+1}{2}\le\binom{k+1}{2}=m.
\]
All inequalities are equalities.  Thus $\rk(C)=k$, $C=\kk(X)$ for a
$(k+1)$-set $X$, and every petal contains exactly one atom of $C$.  For
$e\in\tau(C)$, the unique $B_i$ whose petal contains $e$ is the rank-two flat
$a\vee e$.

Because $a\nleq C$, the two endpoints $u,v$ do not both lie in $X$.  This
leaves cases (1) and (2).  Let $D$ be any further flat of rank at most $k$
that avoids $a$ and intersects every $B_i$.  Repeating the preceding argument
shows that $D$ is a rank-$k$ clique flat selecting exactly one atom from every
petal.

In case (1), every petal is the singleton $\{e\}$ for an edge
$e\in\binom{X}{2}$, so $D=C$.  In case (2), write $X=\{u\}\cup Y$.  The
petals belonging to the edges inside $Y$ are singletons, whereas the petal
belonging to $uy$ is $\{uy,vy\}$ for each $y\in Y$.  Hence $D$ contains all
edges inside $Y$ and chooses one of $uy,vy$ for every $y$.  A rank-$k$ clique
flat must make the same choice for all $y$.  Indeed, if it contained $uy$
and $vy'$ for distinct $y,y'\in Y$, then the clique on $Y$ would place
$u,v$, and all of $Y$ in one block.  That block has rank $k+1$ and also
forces $a\le D$, either of which is impossible.  The two possibilities are
therefore $\kk(\{u\}\cup Y)$ and $\kk(\{v\}\cup Y)$.
\end{proof}

\medskip
\noindent\textbf{The two canonical constructions.}

For case (1), let $a=\eps_{01}$ and let
$X\subseteq\Om_n\setminus\{0,1\}$ have size $k+1$.  Put $C=\kk(X)$ and
\[
 \mathscr F_k(a,X):=
 \{F\in\F_k(M_n):a\le F,\ \rk(F\wedge C)\ge1\}\cup\{C\}.
\]
For case (2), let $Y\subseteq\Om_n\setminus\{0,1\}$ have size $k$, put
$C_i=\kk(\{i\}\cup Y)$, and define $\G_k(a,Y)$ as in
\eqref{eq:Gconstruction-intro}.

\begin{lemma}[Canonical Hilton--Milner candidates]
\label{lem:canonical-HM-candidates}
Let $k\ge2$.  Whenever the relevant ground-set choices exist,
$\mathscr F_k(a,X)$ and $\G_k(a,Y)$ are nontrivial intersecting families.
Moreover, the first formula below holds for $n\ge k+2$, and the second for
$n\ge k+1$:
\begin{align*}
 |\mathscr F_k(a,X)|
 &=\sbinom{n-1}{k-1}-\sbinomsep{n-1}{k-1}{k+1}+1,\\
 |\G_k(a,Y)|
 &=\sbinom{n-1}{k-1}-\sbinomsep{n-1}{k-1}{k+1}+2.
\end{align*}
Hence, whenever $n\ge k+2$,
\[
  |\G_k(a,Y)|=|\mathscr F_k(a,X)|+1.
\]
\end{lemma}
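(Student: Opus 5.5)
We verify the intersecting and nontrivial properties first, and then count by complementation inside the star $\E_k(a)$.

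\emph{Intersecting and nontrivial.} Every member of $\mathscr F_k(a,X)$ other than $C$ contains $a$. Two such members therefore meet in $a$, and each meets $C$ by definition. So $\mathscr F_k(a,X)$ is intersecting. The same holds for $\G_k(a,Y)$, with one extra check: $C_0\wedge C_1=\kk(Y)$, which has rank $k-1\ge1$. For members $F\ge a$ we must show that $\rk(F\wedge C_1)\ge1$ follows from $\rk(F\wedge C_0)\ge1$. An atom of $F\wedge C_0$ is either an edge inside $Y$, which also lies in $C_1$, or an edge $0y$ with $y\in Y$. In the latter case $0,1,y$ lie in one block of $F$ because $a=\eps_{01}\le F$, so $1y\le F\wedge C_1$.

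Nontriviality is immediate. The exceptional flats $C$, respectively $C_0,C_1$, avoid $a$. Also $C_0\wedge C_1=\kk(Y)$ is not below every member, since some $F\ge a$ meeting $C_0$ only in a single edge $0y$ exists when $n$ allows it. For example, take $F$ with blocks $\{0,1,y\}$ plus singletons, suitably padded to rank $k$ using vertices outside $Y$ or by merging in a way that avoids $Y$-edges. This padding needs enough outside vertices, which is where the hypotheses $n\ge k+1$ and $n\ge k+2$ enter. If padding is impossible, one argues directly that the meet of all members has rank $0$ by exhibiting two members with $a$-free intersection.

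\emph{Counting.} By Lemma~\ref{lem:extension-count}, $|\E_k(a)|=\sbinom{n-1}{k-1}$. It remains to count the flats $F\ge a$ with $\rk(F\wedge C)=0$, i.e., those with no two points of $X$ in a common block. Contract $a$: the rank-$k$ flats above $a$ correspond to partitions of the $n$-element set obtained by merging $0$ and $1$ into a single point, with $n+1-k$ blocks.

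In case (1), $X$ is disjoint from $\{0,1\}$, so the condition is that the $k+1$ prescribed points of $X$ lie in distinct blocks. The number of such partitions is $\sbinomsep{n-1}{k-1}{k+1}$ by Lemma~\ref{lem:separated-stirling}, with $N=n$ and $b=n+1-k$. This gives the first formula after adding the single flat $C$.

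In case (2), the relevant clique is $C_0$ on $\{0\}\cup Y$. After contraction, $0$ becomes the merged point $\overline{01}$, so the condition is that the $k+1$ points $\{\overline{01}\}\cup Y$ lie in distinct blocks. The count is again $\sbinomsep{n-1}{k-1}{k+1}$. Adding $C_0$ and $C_1$, which are not above $a$ and are distinct, gives the second formula.

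The difference of the two formulas is then immediate. The main care point is the ground-set requirements. Case (1) needs $k+3\le n+1$ vertices, i.e. $n\ge k+2$, while case (2) needs only $k+2$ vertices. These conditions should also be cross-checked against the domain $1\le b\le N$ of Lemma~\ref{lem:separated-stirling} (here $b=n+1-k\ge1$). I expect the nontriviality verification in small edge cases to be the only delicate step.
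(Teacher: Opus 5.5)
Your check of the intersecting property (including the closure step $\eps_{1y}\le F\wedge C_1$) and your count (contract $a$, subtract the $\sbinomsep{n-1}{k-1}{k+1}$ partitions that separate the $k+1$ distinguished points, then add back the exceptional cliques, which lie outside $\E_k(a)$) are exactly the paper's argument.

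The gap is in nontriviality. For $\mathscr F_k(a,X)$ you only note that $C$ avoids $a$. But $C$ is a member, so the common meet lies below $C$. What must be shown is that no edge of $C$ lies in every member through $a$, and your proposal never does this. No single witness could do it, since every member through $a$ meets $C$ in at least one edge.

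For $\G_k(a,Y)$, a member $F\ge a$ with $F\wedge C_0=\eps_{0y}$ would suffice. It gives $\rk(F\wedge C_0\wedge C_1)=0$, which is stronger than your stated conclusion that $\kk(Y)$ is not below every member. However, such an $F$ must place the $k+1$ points of $\{0\}\cup Y$ in exactly $k$ distinct blocks. It therefore exists only when $n+1-k\ge k$, i.e.\ $n\ge 2k-1$. For example, with $k=3$ and $n=4$, every rank-$3$ flat has two blocks and so contains at least two edges of $C_0$. The hypotheses $n\ge k+1$ and $n\ge k+2$ only guarantee that the vertex sets exist; they do not supply the padding. Your fallback about ``two members with $a$-free intersection'' misses the issue, because the candidate common atoms are the edges inside $Y$, not $a$.

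The missing idea is to exclude candidate common atoms one endpoint at a time rather than all at once.
\begin{itemize}
\item For $\G_k(a,Y)$: a common atom lies below $C_0\wedge C_1=\kk(Y)$, so it is $\eps_{yz}$ with $y,z\in Y$. Let $F_y$ be the flat whose nonsingleton blocks are $\{0,1,y\}$ and $Y\setminus\{y\}$ (omitting the latter if it is a singleton). Then $F_y$ has rank $2+(k-2)=k$, contains $a$, meets $C_0$ in $\eps_{0y}$, and contains no edge from $y$ into $Y$, so it excludes $\eps_{yz}$. It uses only the $k+2$ vertices of $\{0,1\}\cup Y$, so it exists for every $n\ge k+1$.
\item For $\mathscr F_k(a,X)$: given an edge $\eps_{xy}\le C$, take the flat with nonsingleton blocks $\{0,1\}$ and $X\setminus\{y\}$. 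It has rank $1+(k-1)=k$, meets $C$ in $\kk(X\setminus\{y\})$ of rank $k-1\ge1$, and avoids $\eps_{xy}$.
\end{itemize}
These are the paper's witnesses, and with them your proof is complete.
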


\begin{proof}
We first verify the structural assertions.  In
$\mathscr F_k(a,X)$, any two members containing $a$ intersect at $a$, and
each of them intersects the exceptional clique $C=\kk(X)$ by definition.
Thus the family is intersecting.  It is nontrivial because $C$ avoids $a$,
and no atom of $C$ is common to the remaining members.  Indeed, given an
edge $e=\eps_{xy}\le C$, let $F_e$ be the flat whose nonsingleton blocks are
$\{0,1\}$ and $X\setminus\{y\}$.  These blocks contribute ranks $1$ and
$k-1$, respectively, so $\rk(F_e)=k$.  Moreover, $a\le F_e$,
$\rk(F_e\wedge C)\ge1$, and $e\nleq F_e$.  Hence no atom belongs to every
member of $\mathscr F_k(a,X)$.

The members of $\G_k(a,Y)$ containing $a$ intersect one another at $a$ and
meet $C_0$ by definition.  If such a member meets $C_0$ in an edge inside
$Y$, it also meets $C_1$ in that edge; if it meets $C_0$ in an edge $0y$,
closure together with $01$ supplies the edge $1y$, so it meets $C_1$.
Finally, $C_0\wedge C_1=\kk(Y)$ has rank $k-1\ge1$.  Thus $\G_k(a,Y)$ is
intersecting.  To see that it is nontrivial, any atom common to the whole
family would have the form $e=\eps_{yz}$ for distinct $y,z\in Y$.  Let
$F_y$ be the flat whose nonsingleton blocks are $\{0,1,y\}$ and
$Y\setminus\{y\}$, omitting the second block when it is a singleton.  Then
\[
 \rk(F_y)=2+(k-2)=k,
\]
$a\le F_y$, and $F_y$ meets $C_0$ in $\eps_{0y}$, while
$e\nleq F_y$.  Hence $\G_k(a,Y)$ has no common atom.

It remains to count the two families.  Contract the atom $a$, or equivalently
identify its endpoints.  A rank-$k$ flat containing $a$ then corresponds to
a partition of an $n$-element set into $n+1-k$ blocks.  In either
construction, the exceptional clique determines $k+1$ distinguished points
after this identification.  A flat through $a$ meets that clique if and only
if the distinguished points are not all in distinct blocks.  The two formulas
therefore follow from Lemma~\ref{lem:separated-stirling}; their difference is
one.  The five-member family in Lemma~\ref{lem:rank-two-five} is
$\G_2(a,Y)$.
\end{proof}

\begin{proof}[Proof of Theorem~\ref{thm:HM}]
The case $k=2$ is Lemma~\ref{lem:rank-two-five}; the equality family is
$\G_2(a,Y)$ and has size five.  Assume henceforth that $k\ge3$.  The
hypothesis \eqref{eq:HM-explicit-threshold} gives
\[
 b=n+1-k\ge3\left(m^3+\binom m2\right)\ge3,
\]
so $n\ge k+2$.  Hence both canonical candidates in
Lemma~\ref{lem:canonical-HM-candidates} are defined, and all counting
formulas in that lemma apply.

Let $\C$ and $\R$ be supplied by Lemma~\ref{lem:rank-two-peeling}.  We first
derive a small-core alternative.  If $\C$ has no common atom, then
$|\C|\le5\le m-1$, and hence
\[
 |\A|\le5E_2+|\R|
 \le(m-1)E_2+\Psi_k(b)E_2.
\]

Suppose instead that $\C$ has common atom $a$.  Since $\A$ is nontrivial,
choose $C\in\A$ with $a\nleq C$.  Lemma~\ref{lem:rank-two-peeling} gives
$|\A\setminus\A[a]|\le|\R|\le\Psi_k(b)E_2$.  Form
$\Hcal=\Hcal(a,C)$ as in~\eqref{eq:localized-high-family}.  If
$|\Hcal|\le m-1$, Lemma~\ref{lem:localized-high-covers} yields
\begin{equation}
\label{eq:HM-small-core-upper}
 |\A|\le(m-1)E_2+m^3E_3+\Psi_k(b)E_2.
\end{equation}
The same bound also holds in the preceding no-common-atom case.  If the small
alternative does not hold, then $|\Hcal|=m$.

We first exclude the small alternative for a maximum family.  The portion of
$\G_k(a,Y)$ through $a$ is the union of the $m$ full rank-two stars centred
at $a\vee e$, where $e$ ranges over the atoms of $C_0$.  These centres are
distinct.  The intersection of two corresponding stars consists of
extensions of their join, whose rank is at least three, and therefore has
size at most $E_3$.  The first Bonferroni inequality now gives
\[
 |\G_k(a,Y)|
 \ge \sum_{e\in\tau(C_0)}E_2
      -\sum_{\{e,f\}\in\binom{\tau(C_0)}2}E_3+2
 =mE_2-\binom m2E_3+2.
\]
Consequently~\eqref{eq:HM-small-core-upper},
Lemma~\ref{lem:exact-extension-ratio}, and
Lemma~\ref{lem:numerical-remainder} imply
\begin{align*}
 |\G_k(a,Y)|-|\A|
 &\ge E_2-\left(m^3+\binom m2\right)E_3-\Psi_k(b)E_2+2\\
 &\ge
 \left(1-\frac{m^3+\binom m2}{b}-\Psi_k(b)\right)E_2+2>0.
\end{align*}

It remains to consider $|\Hcal|=m$.  Write
$\Hcal=\{B_1,\ldots,B_m\}$.  Lemma~\ref{lem:localized-high-covers} shows
that every $B_i$ intersects every member of $\A$, and
Lemma~\ref{lem:core-classification} applies to $a$, the $B_i$, and $C$.

Let $A\in\A$.  If $a\le A$, then $A$ intersects $C$, because $C\in\A$.  If
$a\nleq A$, then $A$ intersects every $B_i$ by
Lemma~\ref{lem:localized-high-covers}; the final assertion of
Lemma~\ref{lem:core-classification} therefore classifies $A$.  In case (1)
we obtain
\[
  \A\subseteq\mathscr F_k(a,X),
\]
whereas in case (2) we obtain
\[
  \A\subseteq\G_k(a,Y).
\]
Here it is immaterial which of the two exceptional clique flats was chosen as
$C$: among flats containing $a$, meeting one is equivalent, by closure, to
meeting the other.
By Lemma~\ref{lem:canonical-HM-candidates}, the first construction is smaller than
the second by one.  This proves the upper bound, and equality forces case
(2) and $\A=\G_k(a,Y)$.  Conversely, $\G_k(a,Y)$ is a nontrivial
intersecting family of the asserted size.  All choices of $a$ and $Y$ are
equivalent under an automorphism of $K_{n+1}$, proving uniqueness up to
isomorphism.
\end{proof}

\begin{corollary}
Let $k\ge3$.  If a nontrivial intersecting family $\A$ is contained in
neither $\mathscr F_k(a,X)$ nor $\G_k(a,Y)$, for any admissible choice of
the parameters, then
\[
 |\A|\le (m_k-1)E_2+m_k^3E_3+\Psi_k(b)E_2.
\]
Under~\eqref{eq:HM-explicit-threshold}, this bound is strictly smaller than
$\operatorname{HM}(n,k)$.
\end{corollary}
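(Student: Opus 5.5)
The plan is to reread the proof of Theorem~\ref{thm:HM} and observe that it already establishes this dichotomy. Apply Lemma~\ref{lem:rank-two-peeling} to the nontrivial intersecting family $\A$ to obtain the core $\C$ and a remainder $\R$ with $|\R|\le\Psi_k(b)E_2$. The argument then splits into the same cases as in the proof of the theorem.

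\emph{Case: $\C$ has no common atom.} Then $\C\subseteq\F_2(M_n)$ and $|\C|\le5\le m_k-1$, because $k\ge3$ gives $m_k\ge6$. Each member of $\C$ has at most $E_2$ extensions, so
\[
 |\A|\le(m_k-1)E_2+\Psi_k(b)E_2,
\]
which is stronger than the claimed bound.

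\emph{Case: $\C$ has a common atom $a$.} By Lemma~\ref{lem:rank-two-peeling}, every member of $\A\setminus\A[a]$ lies in $\R$. Nontriviality supplies some $C\in\A$ with $a\nleq C$; form $\Hcal(a,C)$ as in~\eqref{eq:localized-high-family}.
\begin{enumerate}
\item If $|\Hcal(a,C)|\le m_k-1$, then \eqref{eq:localized-cover-bound} together with the bound on $\R$ gives exactly the displayed inequality.
\item If $|\Hcal(a,C)|=m_k$, then Lemma~\ref{lem:localized-high-covers} shows every high $B_i$ meets every member of $\A$. Lemma~\ref{lem:core-classification} then classifies all members avoiding $a$, and the members through $a$ meet $C$. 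Exactly as in the proof of Theorem~\ref{thm:HM}, this places $\A$ inside $\mathscr F_k(a,X)$ or $\G_k(a,Y)$ for admissible parameters, which the hypothesis excludes.
\end{enumerate}
Hence only the small alternatives survive, and the displayed bound holds.

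For the comparison with $\operatorname{HM}(n,k)$, I will reuse the Bonferroni estimate from the proof of Theorem~\ref{thm:HM}:
\[
 \operatorname{HM}(n,k)=|\G_k(a,Y)|\ge m_kE_2-\binom{m_k}{2}E_3+2.
\]
Subtracting the displayed bound and using $E_3\le E_2/b$ from Lemma~\ref{lem:exact-extension-ratio} gives
\[
 \operatorname{HM}(n,k)-|\A|\ge\Bigl(1-\frac{m_k^3+\binom{m_k}2}{b}-\Psi_k(b)\Bigr)E_2+2.
\]
Under~\eqref{eq:HM-explicit-threshold}, Lemma~\ref{lem:numerical-remainder} makes this strictly positive. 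Note that $\operatorname{HM}(n,k)=|\G_k(a,Y)|$ needs $n\ge k+2$, which follows from the threshold since $b\ge3$.

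The only delicate point is the containment step in the case $|\Hcal|=m_k$. It must be checked that it uses nothing beyond nontriviality and the choice of $C$, so that it really gives containment in one of the two canonical families. It must also be confirmed that this conclusion is available whenever $C$ avoids the common core atom $a$. Both facts are already present in the proof of Theorem~\ref{thm:HM}, so I expect no new obstacle.
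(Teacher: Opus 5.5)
Your proposal is correct and follows the paper's argument: the paper's proof notes that the case analysis in the proof of Theorem~\ref{thm:HM} yields either the small-core bound \eqref{eq:HM-small-core-upper} (when the core has no common atom, or when $|\Hcal(a,C)|\le m_k-1$) or containment in $\mathscr F_k(a,X)$ or $\G_k(a,Y)$, and that the strict comparison with $\operatorname{HM}(n,k)$ is the Bonferroni estimate combined with Lemma~\ref{lem:numerical-remainder}. One minor point: the identity $\operatorname{HM}(n,k)=|\G_k(a,Y)|$ needs only $n\ge k+1$, although $n\ge k+2$ is available under the threshold anyway.
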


\begin{proof}
The proof of Theorem~\ref{thm:HM} gives either the small-core bound
\eqref{eq:HM-small-core-upper} or containment in one of the two canonical
configurations.  The final assertion is the numerical comparison made in
that proof.
\end{proof}

\section{The linear Erd\H{o}s--Ko--Rado theorem}
\label{sec:linear-ekrproof}

The proof has four stages.  First, a rank-sensitive extension ratio is
combined with spread peeling to control the part of the family whose members
have bounded atomic weight.  Second, the complementary dense part is encoded
by an occupancy model; a modal exponential tilt gives both a global dense
budget and a tail estimate uniform in the atomic weight.  Third, a
singleton-switching argument compares every dense member outside a fixed
edge-star with missing members inside that star.  Finally, the sparse and
dense estimates are combined according to whether the sparse part has a
common atom.  This yields the threshold $N=n+1\ge8k$.  The combinatorial and
probabilistic reductions are proved below; the elementary rational estimates
needed to close the constants are recorded in
Appendix~\ref{app:technical-estimates} so that the main line of the argument
remains visible.

Put
\[
 N:=n+1,\qquad b=N-k,
 \qquad \omega(F):=|\tau(F)|
 =\sum_{B\in F}\binom{|B|}{2}.
\]
Thus a rank-$k$ flat is a $b$-block partition of an $N$-set, and
$E_i=\sbinom{N-i-1}{k-i}$ is the number of rank-$k$ extensions of a fixed
rank-$i$ flat.  We first record a ratio estimate that retains the fact that
the total rank is only $k$.  This is substantially sharper in the present
range than the general estimate $E_i/E_{i-1}\le1/b$.

\begin{lemma}
\label{lem:linear-stirling-ratio}
If $m>b\ge1$ and $s=m-b$, then
\begin{equation}
\label{eq:linear-stirling-ratio}
 \frac{\sbinom{m-2}{s-1}}{\sbinom{m-1}{s}}
 \le \frac{2(m-b)}{mb}.
\end{equation}
Consequently, for $i\ge2$,
\begin{equation}
\label{eq:linear-extension-product}
 \frac{E_i}{E_1}
 \le
 \prod_{j=1}^{i-1}\frac{2(k-j)}{(N-j)b}.
\end{equation}
\end{lemma}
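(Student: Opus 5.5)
The plan is to read \eqref{eq:linear-stirling-ratio} as a statement about classical Stirling numbers and prove it by double counting. By definition $\sbinom{m-1}{s}=S(m,m-s)=S(m,b)$ and $\sbinom{m-2}{s-1}=S(m-1,(m-2)+1-(s-1))=S(m-1,b)$. So \eqref{eq:linear-stirling-ratio} is equivalent to
\[
 mb\,S(m-1,b)\le 2(m-b)\,S(m,b).
\]
Since $m>b\ge1$, we have $S(m,b)>0$. If $S(m-1,b)=0$ there is nothing to prove, so the ratio is well defined.

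For the double count, fix an $m$-set $V$ and consider pairs $(Q,x)$, where $Q$ is a $b$-block partition of $V$ and $x\in V$ is not a singleton block of $Q$. Deleting $x$ from its block yields a $b$-block partition $P$ of $V\setminus\{x\}$, together with the block of $P$ from which $x$ was removed. Conversely, any choice of $x\in V$, a $b$-block partition $P$ of $V\setminus\{x\}$, and one of the $b$ blocks of $P$ into which to insert $x$ gives such a pair. These two maps are mutually inverse. Hence the number of pairs is exactly $m\cdot b\cdot S(m-1,b)$.

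On the other hand, for a fixed $Q$ the number of admissible $x$ is the number of elements lying in nonsingleton blocks. A block of size $s\ge2$ has $s\le 2(s-1)$ elements, and summing $s-1$ over all blocks gives $m-b$. So each $Q$ contributes at most $2(m-b)$ pairs, and the total is at most $2(m-b)S(m,b)$. Comparing the two counts proves \eqref{eq:linear-stirling-ratio}.

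For \eqref{eq:linear-extension-product}, write $E_j=\sbinom{N-j-1}{k-j}=S(N-j,b)$. Apply the first inequality with $m=N-j$, which satisfies $m>b$ precisely when $j<k$ and hence holds for $1\le j\le i-1\le k-1$. This gives
\[
 \frac{E_{j+1}}{E_j}=\frac{S(N-j-1,b)}{S(N-j,b)}\le\frac{2(N-j-b)}{(N-j)b}=\frac{2(k-j)}{(N-j)b}.
\]
If some $E_j$ vanishes, then every later $E_{j'}$ with $j'\ge j$ vanishes too, and the bound is trivial. Otherwise, multiplying these ratios for $j=1,\dots,i-1$ telescopes to \eqref{eq:linear-extension-product}.

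The only real step is the choice of the double count, in particular bounding the number of non-singleton elements by $2(m-b)$. The rest is a reindexing check between $\sbinom{\cdot}{\cdot}$ and $S(\cdot,\cdot)$, which I would verify carefully.
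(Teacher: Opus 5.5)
Your proposal is correct and follows essentially the same route as the paper: the same double count of pairs (a $b$-block partition, an element in a nonsingleton block), giving $mb\sbinom{m-2}{s-1}\le 2s\sbinom{m-1}{s}$, and then successive application with $m=N-1,\dots,N-i+1$ to telescope the ratios. Two small points: your caveats about vanishing terms are unnecessary, since $E_j=S(N-j,b)\ge1$ for $j\le k$; and you should avoid reusing $s$ for a block size when $s=m-b$ is already fixed.
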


\begin{proof}
Double-count pairs $(P,v)$ in which $P$ is a partition of an $m$-set into
$b$ blocks and $v$ belongs to a nonsingleton block of $P$.  Deleting $v$
and then inserting it into one of the $b$ blocks of a partition of the
remaining $m-1$ points shows that the number of pairs is
$mb\sbinom{m-2}{s-1}$.  On the other hand, a $b$-block partition
has at most $2s$ points in nonsingleton blocks: every block of size
$d\ge2$ satisfies $d\le2(d-1)$.  Hence the same number is at most
$2s\sbinom{m-1}{s}$, which proves~\eqref{eq:linear-stirling-ratio}.  Apply this
successively with $m=N-1,N-2,\ldots,N-i+1$ to obtain
\eqref{eq:linear-extension-product}.
\end{proof}

The next lemma is the part of the argument that uses spread approximation.
The important point is to peel after truncating by atomic weight.  Every
centre created by the peeling procedure is a subflat of an original member,
so the same atomic-weight bound remains valid at every stage.

\begin{lemma}
\label{lem:bounded-atom-peeling}
Let $\C\subseteq\F_k(M_n)$ be an intersecting family with no common atom.
If $\omega(F)\le\mu$ for every $F\in\C$, then
\begin{equation}
\label{eq:bounded-atom-peeling}
 |\C|\le \mu E_2+
 \sum_{i=2}^k\binom{i+1}{2}(\mu+1)^{i-1}E_i.
\end{equation}
\end{lemma}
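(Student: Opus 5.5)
We run the peeling procedure with $t=1$ and the fixed spread parameter $r=\mu+1$, using Lemma~\ref{lem:uniform-parameter-peeling}. Its hypotheses hold because $|\tau(F)|=\omega(F)\le\mu$ for every $F\in\C$ and $r>\max\{\mu,1\}$ (if $\mu=0$ the family is empty, or we simply take $r=\max\{\mu,1\}+1$ and note the bound only improves). We carry the procedure from stage $k$ down through stage $2$ and stop there. The cover invariant~\eqref{eq:peeling-cover-invariant} then gives
\[
 \C=\C[\T_2]\cup\bigcup_{i=3}^k\C[\W_i],
\]
where $\T_2=\C_1\cup\W_2$ is an intersecting antichain of flats of rank at most two. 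Every member of $\W_i$ has at most $E_i$ rank-$k$ extensions. By~\eqref{eq:uniform-parameter-Wi} with $t=1$ and $\sbinom{i}{1}=m_i=\binom{i+1}{2}$, we have $|\W_i|\le\binom{i+1}{2}(\mu+1)^{i-1}$. Hence the peeled layers $i\ge3$ contribute at most the corresponding terms of the sum in~\eqref{eq:bounded-atom-peeling}.

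Next we analyse the terminal core $\T_2$, following the proof of Lemma~\ref{lem:rank-two-peeling}.

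\textbf{Case 1: $\T_2$ has a common atom $a$.} Terminality forces $\T_2=\{a\}$, since otherwise $a$ would be admissible. Because $\C$ has no common atom, there is $C_0\in\C$ with $a\nleq C_0$. As in the proof of Lemma~\ref{lem:rank-spread-EKR}, every $F\in\C[a]$ contains $a\vee e$ for some atom $e\le F\wedge C_0$ with $e\ne a$. There are at most $\omega(C_0)\le\mu$ choices of $e$, so $|\C[a]|\le\mu E_2$, which is the first term of~\eqref{eq:bounded-atom-peeling}.

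\textbf{Case 2: $\T_2$ has no common atom.} Then it contains no atom, so $\T_2=\W_2\subseteq\F_2(M_n)$. Lemma~\ref{lem:rank-two-five} gives $|\W_2|\le5$, and the uniform bound gives $|\W_2|\le3(\mu+1)$. The $i=2$ term $\binom{3}{2}(\mu+1)E_2$ therefore covers $\C[\W_2]$. The leading $\mu E_2$ term is unused in this case, which is harmless.

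The one point needing care is the bookkeeping of which term absorbs the core. In Case 1 the centre $a$ has rank one, so it is not covered by any $\W_i$ with $i\ge2$; it must be charged to $\mu E_2$, and this requires the non-trivial member $C_0$ with bounded atomic weight. That member exists precisely because $\C$ has no common atom, and this is the only place the hypothesis enters. A second check is that the bound $\omega\le\mu$ persists for every current centre $F$ used in the spread contradiction. This holds because centres are subflats of original members, as already noted in Lemma~\ref{lem:uniform-parameter-peeling}. Summing the two cases with the peeled layers gives~\eqref{eq:bounded-atom-peeling}.
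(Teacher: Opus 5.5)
Your proof is correct and is essentially the paper's argument. You run uniform-parameter peeling with $t=1$ and $r=\mu+1$ through stage two, charge the peeled layers $\W_i$ ($i\ge2$) to the sum, and charge a possible single-atom core $\{a\}$ to $\mu E_2$ using a member avoiding $a$. Your case split on $\T_2$, including the appeal to Lemma~\ref{lem:rank-two-five}, is not needed: the paper simply notes that the rank-one core is empty or a single atom and adds both contributions.

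One small slip: replacing $r$ by $\max\{\mu,1\}+1$ would enlarge rather than improve the bound. The point is moot, however, since $\omega(F)\ge k\ge1$ forces $\mu\ge1$ for any nonempty $\C$.
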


\begin{proof}
Apply Lemma~\ref{lem:uniform-parameter-peeling} to $\C$ with $t=1$,
$q=\mu$, and $r=\mu+1$.  Since $\sbinom{i}{1}=m_i=\binom{i+1}{2}$, every
terminal layer satisfies
\[
 |\W_i|\le\binom{i+1}{2}(\mu+1)^{i-1}.
\]
The cover invariant therefore bounds the contribution of all rank-$i$
layers, $i\ge2$, by the sum in~\eqref{eq:bounded-atom-peeling}.

It remains to control the final rank-one core.  Since it is an intersecting
antichain of atoms, it is either empty or consists of one atom $a$.  In the
second case choose $F_0\in\C$ with $a\nleq F_0$, which is possible because
$\C$ has no common atom.  Every member $F\in\C[a]$ intersects $F_0$ in an
atom $e\in\tau(F_0)$.  Necessarily $e\ne a$, and $a\vee e\le F$.  There are
at most $\omega(F_0)\le\mu$ choices for $e$, and every centre $a\vee e$ has
rank two.  Hence
\[
 |\C[a]|\le\mu E_2.
\]
Adding this possible core contribution to the peeled layers proves
\eqref{eq:bounded-atom-peeling}.
\end{proof}

\begin{corollary}
\label{cor:linear-sparse-budget}
Suppose that $N\ge8k$ and $\mu+1\le7b/10$.  Under the hypotheses of
Lemma~\ref{lem:bounded-atom-peeling},
\[
 \frac{|\C|}{E_1}
 \le \frac7{40}+\left(1-\frac7{40}\right)^{-3}-1
<\frac{22}{23}.
\]
\end{corollary}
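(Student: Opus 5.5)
We divide \eqref{eq:bounded-atom-peeling} by $E_1$ and bound each term through the ratio estimate \eqref{eq:linear-extension-product} of Lemma~\ref{lem:linear-stirling-ratio}. Put $x_j:=2(k-j)/((N-j)b)$, so that $E_i/E_1\le\prod_{j=1}^{i-1}x_j$.

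\textbf{The core term.} By the case $i=2$ and the hypothesis $\mu+1\le 7b/10$,
\[
 \frac{\mu E_2}{E_1}\le \mu\cdot\frac{2(k-1)}{(N-1)b}
 <\frac{7b}{10}\cdot\frac{2(k-1)}{(N-1)b}=\frac{7(k-1)}{5(N-1)}.
\]
With $N\ge 8k$ we have $(k-1)/(N-1)\le 1/8$. Hence this term is at most $7/40$.

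\textbf{The peeled layers.} For $i\ge2$ each factor satisfies
\[
 (\mu+1)x_j\le \frac{7}{10}\cdot\frac{2(k-j)}{N-j}\le \frac{7}{5}\cdot\frac{k-1}{N-1}\le\frac7{40}=:\rho ,
\]
because $(k-j)/(N-j)$ decreases in $j$ when $N>k$. Therefore the rank-$i$ term is at most $\binom{i+1}{2}\rho^{\,i-1}$. Summing over $i\ge2$ and extending to infinity, we use the generating function
\[
 \sum_{i\ge1}\binom{i+1}{2}\rho^{\,i-1}=(1-\rho)^{-3}.
\]
Removing the $i=1$ term, which equals $1$, bounds the layer sum by $(1-\rho)^{-3}-1$. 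Adding the core bound gives the displayed inequality
\[
 \frac{|\C|}{E_1}\le \frac7{40}+\Bigl(1-\frac7{40}\Bigr)^{-3}-1 .
\]

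\textbf{The numerical check.} Note that $1-\rho=33/40$, so $(1-\rho)^{-3}=64000/35937\approx1.7809$. The total is therefore about $0.175+0.781=0.956$, which is below $22/23\approx0.9565$. The margin is thin, so I would compute the rational number exactly rather than rely on decimals; this is the kind of estimate the paper defers to Appendix~\ref{app:technical-estimates}.

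\textbf{Main obstacle.} The step needing care is getting the uniform factor $\rho$ for every $j$ and not just $j=1$. This rests on the monotonicity of $(k-j)/(N-j)$ together with replacing $\mu+1$ by $7b/10$, which cancels the $b$ in $x_j$. The closing rational comparison is the other delicate point, since the bound clears $22/23$ by less than $10^{-3}$.
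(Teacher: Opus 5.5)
Your proposal is correct and is essentially the paper's proof: you bound each factor $(\mu+1)\cdot\frac{2(k-j)}{(N-j)b}$ by $\frac{7}{40}$ using $\mu+1\le 7b/10$ and $N\ge 8k$, then sum with $\sum_{i\ge2}\binom{i+1}{2}x^{i-1}=(1-x)^{-3}-1$. The only difference is cosmetic: the paper uses $(k-j)/(N-j)\le k/N$, where you use monotonicity in $j$. Your closing comparison also checks out exactly: $\frac{7}{40}+\frac{28063}{35937}=\frac{1374079}{1437480}$, and $23\cdot 1374079=31603817<31624560=22\cdot 1437480$.
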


\begin{proof}
Put $r=\mu+1$.  Lemma~\ref{lem:linear-stirling-ratio} gives
\[
 \frac{\mu E_2}{E_1}
 \le\frac{2\mu(k-1)}{(N-1)b}
 \le\frac75\frac{k}{N}\le\frac7{40},
\]
and, for every $i\ge2$,
\[
 r^{i-1}\frac{E_i}{E_1}
 \le\prod_{j=1}^{i-1}\frac{2r(k-j)}{(N-j)b}
 \le\left(\frac75\frac{k}{N}\right)^{i-1}
 \le\left(\frac7{40}\right)^{i-1}.
\]
Substitute these bounds into~\eqref{eq:bounded-atom-peeling} and extend the
finite sum to infinity.  The identity
\[
 \sum_{i\ge2}\binom{i+1}{2}x^{i-1}=(1-x)^{-3}-1
\]
then gives the claimed bound.
\end{proof}

We next show that flats outside this sparse regime form a very small part of
the full level.  Set
\[
 r_0:=\left\lfloor\frac{7b}{10}\right\rfloor,
 \qquad
 T(r):=|\{F\in\F_k(M_n):\omega(F)\ge r\}|,
 \qquad
 \eta:=\frac{N-3k+1}{N-3k+2}.
\]

\begin{lemma}
\label{lem:linear-total-star-ratio}
One has
\[
 \frac{E_0}{E_1}
 =\frac{\sbinom{N-1}{k}}{\sbinom{N-2}{k-1}}
 \le\frac{N(N-1)}{2k}<\frac{N^2}{2k}.
\]
\end{lemma}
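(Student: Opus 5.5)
The plan is to prove the ratio bound by double counting pairs (flat, atom below it), mirroring the proof of Lemma~\ref{lem:linear-stirling-ratio}. The identity $E_0/E_1=\sbinom{N-1}{k}/\sbinom{N-2}{k-1}$ is just the definition $E_i=\sbinom{N-i-1}{k-i}$ at $i=0,1$. So it suffices to show
\[
 k\,E_0\le\binom N2 E_1 .
\]
The final strict inequality then follows from $N-1<N$.

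Count the set of pairs $(F,e)$ with $F\in\F_k(M_n)$ and $e\in\tau(F)$ in two ways. Grouping by the atom: there are $\binom{N}{2}$ atoms $e=\eps_{xy}$, one for each edge of $K_N=K_{n+1}$. By Lemma~\ref{lem:extension-count} with $r=1$, each atom lies below exactly $E_1$ rank-$k$ flats. So the number of pairs is exactly $\binom N2E_1$.

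Grouping by the flat: the number of pairs is $\sum_{F}\omega(F)$. Every rank-$k$ flat has at least $k$ atoms. One way to see this is that $\omega(F)=\sum_B\binom{|B|}{2}\ge\sum_B(|B|-1)=k$, since $\binom d2\ge d-1$ for every $d\ge1$. Alternatively, a spanning forest of $F$ has $k$ edges. Hence the number of pairs is at least $k\,|\F_k(M_n)|=kE_0$. Comparing the two counts gives $kE_0\le\binom N2E_1$, which is the claimed bound.

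There is no real obstacle here. The only care needed is in the index translation: $E_0=S(N,b)$ counts all $b$-block partitions, and $E_1=S(N-1,b)$ counts those in which a fixed pair shares a block, by contracting that pair. One should also note $E_1>0$, which holds because $k\ge1$ and $b\ge1$, so that dividing by $E_1$ is legitimate.
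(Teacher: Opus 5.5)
Your proposal is correct and is essentially the paper's own proof. The paper double counts the same pairs $(F,a)$ with $a\in\tau(F)$, gets $\sum_F\omega(F)=\binom N2E_1$, and uses $\omega(F)=\sum_i\binom{d_i}{2}\ge\sum_i(d_i-1)=k$ to conclude $kE_0\le\binom N2E_1$.
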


\begin{proof}
Double-count pairs $(F,a)$ with $F\in\F_k(M_n)$ and
$a\in\tau(F)$. Every atom belongs to exactly $E_1$ rank-$k$ flats, so
\[
 \sum_F\omega(F)=\binom N2E_1.
\]
If the blocks of $F$ have sizes $d_1,d_2,\ldots$, then
\[
 \omega(F)=\sum_i\binom{d_i}{2}
 \ge\sum_i(d_i-1)=k.
\]
Thus $kE_0\le\binom N2E_1$.
\end{proof}

To estimate $T(r)$, order the $b$ blocks and write their sizes as
\[
 1+y_1,\ldots,1+y_b,
 \qquad y_i\ge0,\qquad \sum_{i=1}^b y_i=k.
\]
For a fixed vector $y$, the number of ordered partitions is
$N!/\prod_i(y_i+1)!$. Hence, under the uniform distribution on
$\F_k(M_n)$,
\begin{equation}
\label{eq:linear-occupancy-law}
 \mathbb P(y_1,\ldots,y_b)\ \propto\
 \prod_{i=1}^b\frac1{(y_i+1)!},
 \qquad
 \omega(F)=\sum_{i=1}^b\binom{y_i+1}{2}.
\end{equation}
Let
\[
 f(z):=\sum_{j\ge0}\frac{z^j}{(j+1)!}=\frac{e^z-1}{z},
 \qquad c_s:=[z^s]f(z)^b.
\]
The sequence $(1/(j+1)!)_{j\ge0}$ is log-concave, and convolution preserves
log-concavity~\cite{Hoggar}; therefore $(c_s)_{s\ge0}$ is log-concave.

\begin{lemma}
\label{lem:linear-modal-tilt}
If $0<k<b$, there is a real number $\lambda>0$ for which $k$ is a mode of
the distribution
\begin{equation}
\label{eq:linear-tilted-sum}
 \mathbb P_\lambda(S=s):=\frac{c_s\lambda^s}{f(\lambda)^b}.
\end{equation}
Moreover, $\lambda$ may be chosen so that
\begin{equation}
\label{eq:linear-lambda-upper}
 \lambda\le\frac{2(k+1)}{b-k}.
\end{equation}
\end{lemma}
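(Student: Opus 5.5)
The plan is to reduce the mode condition to two ratio inequalities, use log-concavity to see that an admissible $\lambda$ exists, and then choose the smallest admissible $\lambda$ and bound it by a direct weight-transfer (double counting) argument.

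\textbf{Reducing to a ratio condition.} Every coefficient $c_s=[z^s]f(z)^b$ is strictly positive, since each term $1/(j+1)!$ is positive. So $(c_s\lambda^s)_{s\ge0}$ is a positive log-concave sequence for every $\lambda>0$: multiplying by the geometric factor $\lambda^s$ preserves log-concavity. A positive log-concave sequence is unimodal, so any local maximum is a global maximum. Hence $k$ is a mode of \eqref{eq:linear-tilted-sum} as soon as
\[
 c_{k-1}\lambda^{k-1}\le c_k\lambda^k
 \quad\text{and}\quad
 c_{k+1}\lambda^{k+1}\le c_k\lambda^k,
\]
that is, as soon as $c_{k-1}/c_k\le\lambda\le c_k/c_{k+1}$. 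Log-concavity gives $c_k^2\ge c_{k-1}c_{k+1}$, so this interval is nonempty. I will take $\lambda:=c_{k-1}/c_k>0$, which is defined because $k\ge1$. It then remains only to prove \eqref{eq:linear-lambda-upper} for this choice of $\lambda$.

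\textbf{Bounding $c_{k-1}/c_k$ by weight transfer.} Write
\[
 c_s=\sum_{y}\prod_{i=1}^{b}\frac1{(y_i+1)!},
\]
the sum running over $y\in\mathbb Z_{\ge0}^b$ with $\sum_i y_i=s$. I will double count weighted pairs $(y',i)$ in which $\sum_i y'_i=k-1$ and $y'_i=0$.
\begin{enumerate}
\item Such a $y'$ has at most $k-1$ positive coordinates, so it has at least $b-k+1$ zero coordinates.
\item Raising a zero coordinate from $0$ to $1$ multiplies the weight by $1!/2!=1/2$. Summing the raised weights over all pairs therefore gives at least $\tfrac{b-k+1}{2}\,c_{k-1}$.
\item The result of raising is a vector $y$ with $\sum_i y_i=k$ and $y_i=1$. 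Any such $y$ arises from at most $k$ pairs, because at most $k$ of its coordinates equal $1$. So the total is at most $k\,c_k$.
\end{enumerate}
Comparing the two bounds gives
\[
 c_k\ge\frac{b-k+1}{2k}\,c_{k-1}.
\]
Here $b-k+1>0$ because $k<b$. Consequently
\[
 \lambda\le\frac{2k}{b-k+1}\le\frac{2(k+1)}{b-k},
\]
which is \eqref{eq:linear-lambda-upper}.

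\textbf{Main obstacle.} The delicate point is the mode step. It relies on log-concavity with no internal zeros, so that the two local ratio conditions imply that $k$ is a global mode. Positivity of all $c_s$ settles this. The counting bound itself is routine once the raising map from $0$ to $1$ has been chosen.
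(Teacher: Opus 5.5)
Your proof is correct and is essentially the paper's argument: log-concavity makes $[c_{k-1}/c_k,\,c_k/c_{k+1}]$ a nonempty interval of admissible $\lambda$, and the same weight-transfer double count then bounds the chosen endpoint. (Strictly, $k$ is a global mode because the ratios $c_{s+1}\lambda^{s+1}/(c_s\lambda^s)$ are nonincreasing; bare unimodality would not suffice.) The only real difference is that the paper takes the upper endpoint $\lambda=c_k/c_{k+1}$ and runs the count at $s=k$, whereas you take the lower endpoint $c_{k-1}/c_k$ and run it at $s=k-1$, which gives the slightly sharper $\lambda\le 2k/(b-k+1)$.
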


\begin{proof}
Log-concavity gives
\[
 \frac{c_{k-1}}{c_k}\le\frac{c_k}{c_{k+1}}.
\]
Choose $\lambda=c_k/c_{k+1}$.  Then the two adjacent ratios show that $k$
is a mode of~\eqref{eq:linear-tilted-sum}.

For the upper bound, write
$w(y)=\prod_i1/(y_i+1)!$ for $\sum_i y_i=s$. Increasing one coordinate
and using $w(y+e_i)=w(y)/(y_i+2)$ yields
\[
 \sum_{\sum y_i=s}w(y)\sum_{i=1}^b\frac1{y_i+2}
 =\sum_{\sum z_i=s+1}\nu(z)w(z),
\]
where $\nu(z)$ is the number of positive coordinates of $z$. The left side
is at least $(b-s)c_s/2$, because $y$ has at least $b-s$ zero coordinates,
whereas the right side is at most $(s+1)c_{s+1}$. Therefore
\[
 \frac{c_s}{c_{s+1}}\le\frac{2(s+1)}{b-s}.
\]
Taking $s=k$ gives the asserted bound on $\lambda$.
\end{proof}

For the estimates below we use the elementary exponential bounds
\begin{equation}
\label{eq:linear-elementary-exp}
 e^x\le \varphi(x):=1+x+\frac{x^2}{2(1-x/3)}
 \qquad(0\le x<3)
\end{equation}
and
\begin{equation}
\label{eq:linear-geometric-exp}
 e^x\le\frac1{1-x}
 \qquad(0\le x<1).
\end{equation}
The function $\varphi$ is increasing on $[0,3)$, and in particular
$e\le\varphi(1)=11/4<3$.  These facts are verified in
Lemma~\ref{lem:appendix-exp-bounds}.

We treat small and large $k$ separately. For bounded $k$, large
atomic weight forces a large block and a direct union bound is strongest.

\begin{lemma}
\label{lem:linear-dense-small}
Suppose that $N\ge8k$ and $1\le k<120$. For every $r\ge r_0$,
\begin{equation}
\label{eq:linear-dense-small}
 \frac{T(r)}{E_1\eta^r}<\frac1{25000}.
\end{equation}
\end{lemma}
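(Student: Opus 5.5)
Since $k<120$ and $\omega(F)\le\binom{k+1}{2}$, atomic weight at least $r\ge r_0=\lfloor 7b/10\rfloor$ is very large compared with $k$. The plan is to use this to force a big block, and then control $T(r)$ by a union bound over the vertex set of that block. If $r>\binom{k+1}{2}$ then $T(r)=0$ and there is nothing to prove. So assume $r_0\le r\le\binom{k+1}{2}$; in particular $b\le\frac{10}{7}(\binom{k+1}{2}+1)$, which confines $N=b+k$ to a bounded window for each $k<120$.

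\textbf{Step 1: a large weight forces a large block.} Let the block excesses be $y_i$ with $\sum y_i=k$. Then
\[
\omega=\sum\binom{y_i+1}{2}\le \tfrac{y_{\max}+1}{2}\sum y_i=\tfrac{k(y_{\max}+1)}{2}.
\]
Hence $\omega\ge r$ forces a block of size $d:=y_{\max}+1\ge 2r/k$. Every such $F$ contains $\kk(X)$ for some $d$-set $X$, a flat of rank $d-1$. Lemma~\ref{lem:extension-count} then gives
\[
T(r)\le \binom{N}{d}E_{d-1},\qquad d=\lceil 2r/k\rceil .
\]

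\textbf{Step 2: reduce to a ratio.} Lemma~\ref{lem:linear-stirling-ratio} gives
\[
\frac{E_{d-1}}{E_1}\le\prod_{j=1}^{d-2}\frac{2(k-j)}{(N-j)b}.
\]
Combining with $\binom{N}{d}\le N^d/d!$, the quantity $T(r)/(E_1\eta^r)$ is bounded by
\[
\frac{N^2}{d!}\prod_{j=1}^{d-2}\frac{2k}{b}\cdot\eta^{-r}.
\]
Using $b\ge 7N/8$ and $N\ge8k$, each factor $2kN/(b(N-j))$ is about $2k/b\le 2/7$ or smaller. The factor $\eta^{-r}=(1+1/(N-3k+1))^{r}\le e^{r/(N-3k+1)}$ is bounded by a constant, because $r\le\binom{k+1}{2}$ while $N-3k+1\ge 5k/8\cdot$... roughly. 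More precisely, $r/(N-3k)\le (k+1)k/(2\cdot 5k)\approx k/10$. This is not bounded, so the estimate has to be balanced against the $1/d!$ decay: with $d\ge 2r/k$, the loss is $e^{O(r/N)}$ and $r/N\lesssim d\,k/(2N)\le d/16$, which the superexponential factor $1/d!$ and the geometric factor $(2/7)^{d}$ absorb. The leading $N^2$ must be beaten by the product of $d-2$ small factors together with $1/d!$, where $d\ge 2r_0/k\approx 1.4b/k\ge 1.4\cdot 7=9.8$. So $d\ge 10$, and the $1/d!$ decay together with $(2k/b)^{d-2}$ overwhelms $N^2$ (note $N^2(2k/b)^{2}=O(k^2)$-ish).

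\textbf{Step 3: finite verification, the main obstacle.} Because $N$ is not bounded above while $r_0$ grows with $b$, the tidy approach is to regard the bound as a function of $(k,N,r)$ and show it is decreasing in $r$ and in $N$ beyond $8k$. Both monotonicities hold because $d$ grows linearly in $r$ and the per-factor gain $2k/b$ shrinks. This reduces the claim to the corner $N=8k$, $r=r_0$, $k<120$, which is a finite check. The delicate part is getting the constant $1/25000$ uniformly: for small $k$ (e.g.\ $k=1,2$) the block size bound $d\ge 2r/k$ is crude. There I would instead use the exact count directly, $T(r)\le\binom{N}{d}E_{d-1}$ with the true minimal $d$ given $\omega\ge r$. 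If needed, I would also sharpen to $E_{d-1}/E_1\le\prod 2(k-j)/((N-j)b)$ with the exact $k-j$ factors, which vanish once $d-1>k$ and give $T(r)=0$ outright. I expect the final numerical closing, to be placed in the appendix, to be the real work.
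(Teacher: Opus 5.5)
Your route is the paper's: force a block of size $d=\lceil 2r/k\rceil$, bound $T(r)\le\binom Nd E_{d-1}$, apply Lemma~\ref{lem:linear-stirling-ratio}, push $N$ down to $8k$, and close numerically.

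\textbf{The gap: the reduction in Step~3 fails as written.} The bound is \emph{not} decreasing in $r$. For fixed $d$, the factor $\eta^{-r}$ grows with $r$. So checking the corner $r=r_0$ does not cover, say, $r=5k$, which at $N=8k$ still has $d=10$.

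The fix is the device you gesture at in Step~2, made precise. Use $r\le dk/2$ together with $N-3k+1\ge5k+1$. The relevant denominator is $N-3k+1$, not $N$, so your $e^{d/16}$ should be $e^{d/10}$:
\[
 \eta^{-r}\le\Bigl(1+\frac1{5k+1}\Bigr)^{dk/2}<e^{d/10}.
\]
With your simplification $(k-j)/(N-j)\le k/N$, the whole quantity is then at most
\[
 G_d:=\frac{N^2}{d!}\Bigl(\frac{2k}{b}\Bigr)^{d-2}e^{d/10},
\]
which depends only on $d$ and $N$. For $d\ge10$ this is decreasing in $N\ge8k$. Moreover $G_{d+1}/G_d\le 2e^{1/10}/(7(d+1))<1$. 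So the extremal case is $N=8k$, $d=10$, not $r=r_0$.

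\textbf{Closing the estimate.} The ``finite check'' is then one line, the paper's $H_{10}(k)$ estimate. At $N=8k$,
\[
 G_{10}=\frac{2^{14}e\,k^2}{10!\,7^8}<\frac{2^{14}\cdot3\cdot120^2}{10!\,7^8}<\frac1{25000}.
\]
The margin is modest, roughly $3.4\times10^{-5}$ against $4\times10^{-5}$. So the constants must be tracked as above rather than asymptotically.

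\textbf{Small $k$ and the bound $d\ge10$.} Your worry about small $k$ is unnecessary. For $k\le8$ one has $r_0\ge49k/10-1>\binom{k+1}2$, so $T(r)=0$ identically; this is how the paper disposes of that range. For $k\ge9$, the bound $2r_0/k\ge9.8-2/k>9$ makes $d\ge10$ rigorous. Your ``$\approx9.8$'' ignores the floor in $r_0$.
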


\begin{proof}
If $k\le8$, then
\[
 r_0\ge\frac{49k}{10}-1>\binom{k+1}{2},
\]
so $T(r)=0$. Assume henceforth that $9\le k<120$. From
\eqref{eq:linear-occupancy-law},
\[
 \omega(F)=\frac12\sum_i y_i(y_i+1)
 \le\frac{k}{2}\bigl(1+\max_i y_i\bigr).
\]
Thus $\omega(F)\ge r$ forces a block of size at least
\[
 d:=\left\lceil\frac{2r}{k}\right\rceil\ge10.
\]
Selecting $d$ points in such a block and contracting them gives
\[
 T(r)\le\binom Nd E_{d-1}.
\]
If $d>k+1$, then $T(r)=0$.  Otherwise,
Lemma~\ref{lem:linear-stirling-ratio} gives
\begin{equation}
\label{eq:linear-Rd}
 \frac{T(r)}{E_1}
 \le R_d(N,k):=
 \frac{2^{d-2}N(N-d+1)(k-1)\cdots(k-d+2)}
      {d!\,b^{d-2}}.
\end{equation}
Lemma~\ref{lem:appendix-small-tail-estimate} proves the remaining uniform
estimate
\[
 R_d(N,k)\eta^{-r}<\frac1{25000}.
\]
Combining this with~\eqref{eq:linear-Rd} proves
\eqref{eq:linear-dense-small}.
\end{proof}

For $k\ge120$ we use a conditional exponential moment in the occupancy
model.

\begin{lemma}
\label{lem:linear-local-mass}
Suppose that $N\ge8k$ and $k\ge120$. Choose $\lambda$ as in
Lemma~\ref{lem:linear-modal-tilt}. Then
\begin{equation}
\label{eq:linear-local-mass}
 \mathbb P_\lambda(S=k)>\frac1{7\sqrt{k}}.
\end{equation}
\end{lemma}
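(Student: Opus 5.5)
We need to show that $\mathbb P_\lambda(S=k)>1/(7\sqrt k)$, where $S=Y_1+\cdots+Y_b$ under the tilted law. Under $\mathbb P_\lambda$ the $Y_i$ are i.i.d. with $\mathbb P(Y=j)=\lambda^j/((j+1)!f(\lambda))$, so $S$ is a sum of independent variables and its law $(c_s\lambda^s/f(\lambda)^b)_s$ is log-concave. The plan is to use that $k$ is a mode of this law (Lemma~\ref{lem:linear-modal-tilt}), together with the standard fact that a log-concave integer distribution has modal probability at least about $1/\sqrt{1+12\sigma^2}$. So the key steps are: (i) invoke the mode/variance bound, and (ii) bound $\operatorname{Var}_\lambda(S)=b\operatorname{Var}(Y)$ by roughly $k$ with an explicit constant.

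\textbf{Step (i).} I would use the bound $\max_s p_s\ge 1/\sqrt{1+12\sigma^2}$ for discrete log-concave laws, due to Bobkov--Marsiglietti--Melbourne. If citing it is undesirable, there is an elementary substitute, which is the main delicate point. Let $p_k$ be the mode. Log-concavity makes the ratios $p_{s+1}/p_s$ monotone, so the distribution is dominated on each side of $k$ by a geometric tail starting from $p_k$. Write $p_k=\varepsilon$. Chebyshev's inequality puts mass at least $3/4$ on the window $|s-\mathbb E S|\le2\sigma$. That window contains at most $4\sigma+1$ integers, each of probability at most $\varepsilon$. Hence $\varepsilon\ge 3/(4(4\sigma+1))$, which holds even without log-concavity.

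\textbf{Step (ii).} We have $\operatorname{Var}(Y)\le\mathbb E Y^2$. For small $\lambda$ this equals $(\lambda/2)+O(\lambda^2)$ divided by $f(\lambda)\ge1$; more precisely $\mathbb E Y^2\le\sum_j j^2\lambda^j/(j+1)!\le \lambda e^\lambda/2\cdot(1+\lambda)$. By \eqref{eq:linear-lambda-upper} and $b=N-k\ge7k$, we get $\lambda\le 2(k+1)/(6k)\le 0.34$ for $k\ge120$. This gives $b\,\mathbb E Y^2\le b\lambda\cdot c$ with $c\approx0.66$, and $b\lambda\le 2(k+1)b/(b-k)\le 2(k+1)\cdot7/6$. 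Thus $\sigma^2\le 1.6(k+1)$ or so.

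\textbf{Conclusion and main obstacle.} Plugging into Step (i) gives $\varepsilon\ge 3/(4(4\sqrt{1.6(k+1)}+1))\approx 1/(6.8\sqrt k)$. This is borderline against the target constant $7$, and checking that it clears $1/(7\sqrt k)$ is the step I expect to be hardest. If the crude bound falls short, I would sharpen it in one of two ways. The first is to tighten the variance: use $\mathbb E S=k$ approximately, so $\sigma^2\approx k(1+\lambda/3)$ rather than $1.6k$, noting that the tilt makes the mean near $k$ and $\operatorname{Var}(Y)/\mathbb E Y\approx 1+\lambda/6$. The second is to replace Chebyshev by the log-concave bound $1/\sqrt{1+12\sigma^2}$, which gives about $1/(3.5\sqrt k)$ and comfortably beats $1/(7\sqrt k)$. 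The inequality $k\ge120$ would be used only to absorb the additive $+1$ terms, and the rational estimates can be deferred to the appendix.
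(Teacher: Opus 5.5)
Your proof goes through, but only along the branch you list first in Step~(i), the Bobkov--Marsiglietti--Melbourne bound. The Chebyshev branch, which your Conclusion actually evaluates and which is the paper's route, does not close with the variance estimate you wrote.

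The coefficientwise inequality $\sum_j j^2\lambda^j/(j+1)!\le\frac12\lambda e^\lambda(1+\lambda)$ is true. However, at $\lambda=121/360$ it gives $\mathbb E Y^2\le 0.935\lambda$, not $0.66\lambda$. Restoring the factor $1/f(\lambda)$ only improves this to about $0.79\lambda$. With $b\lambda\le\frac73(k+1)$ you get $\sigma^2<2.2(k+1)$, and then $3/(4(4\sigma+1))$ is only about $1/(7.9\sqrt k)$; already at $k=120$ it falls below $1/(7\sqrt{120})$. Chebyshev needs $\mathbb E Y^2\le c\lambda$ with $c$ below roughly $0.7$.

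The paper gets $c=\frac23$ by keeping the exact closed form $\mathbb E J^2=\frac{e^\lambda(\lambda^2-\lambda+1)-1}{e^\lambda-1}$. Lemma~\ref{lem:appendix-local-mass-estimates} shows this is at most $\frac23\lambda$ for $0<\lambda\le121/360$, via $e^\lambda\le(3-2\lambda)/(3-5\lambda+3\lambda^2)$. That yields $\sigma^2<\frac85k$, and the final comparison $16\sqrt{8/5}+8/\sqrt k<21$ holds with very little room. Your other proposed sharpening, through $\mathbb E S\approx k$, is only heuristic: knowing that $k$ is a mode gives no explicit bound on $|\mathbb E S-k|$ by itself.

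The branch that works is genuinely different from the paper's. Since $k$ is a mode, $\mathbb P_\lambda(S=k)=\max_s\mathbb P_\lambda(S=s)\ge(1+12\sigma^2)^{-1/2}$. Even your crude bound $\sigma^2<2.2(k+1)$ then gives $(1+26.4(k+1))^{-1/2}>1/(7\sqrt k)$, because $1+26.4(k+1)<49k$ for all $k\ge2$. This is roughly $1/(5.1\sqrt k)$ rather than your $1/(3.5\sqrt k)$, but the margin is wide.

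So you trade the elementary but lossy Chebyshev step, which forces the delicate rational estimate for $\mathbb E J^2$, for a sharper anti-concentration inequality that tolerates a crude variance bound. You do not even need the citation or log-concavity for it. Add to $S$ an independent uniform variable on $[-\frac12,\frac12]$; this produces a density bounded by $M=\max_s\mathbb P_\lambda(S=s)$ with variance $\sigma^2+\frac1{12}$. By the bathtub principle, any density bounded by $M$ has variance at least $1/(12M^2)$. Hence $M\ge(1+12\sigma^2)^{-1/2}$ for every integer-valued law, and the only input you need from Lemma~\ref{lem:linear-modal-tilt} is that $k$ is a mode.
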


\begin{proof}
By~\eqref{eq:linear-lambda-upper},
\begin{equation}
\label{eq:linear-lambda-rational}
 \lambda\le\frac{2(k+1)}{N-2k}
 \le\frac{121}{360},
 \qquad
 \frac{b\lambda}{k}
 \le2\left(1+\frac1k\right)\frac{N-k}{N-2k}
 \le\frac{847}{360}.
\end{equation}
Let $J_1,\ldots,J_b$ be independent with
\[
 \mathbb P_\lambda(J_i=j)
 =\frac{\lambda^j}{(j+1)!f(\lambda)}.
\]
Then $S=J_1+\cdots+J_b$ has distribution
\eqref{eq:linear-tilted-sum}. Direct summation gives
\[
 \mathbb E J_i^2
 =\frac{e^\lambda(\lambda^2-\lambda+1)-1}{e^\lambda-1}.
\]
The first estimate in Lemma~\ref{lem:appendix-local-mass-estimates}, applied
with $0<\lambda\le121/360$, gives
\[
 \mathbb E J_i^2\le\frac23\lambda.
\]
Consequently, if $\sigma^2=\operatorname{Var}(S)$, then
\[
 \sigma^2\le b\mathbb E J_i^2
 \le\frac{847}{540}k<\frac85k.
\]
Chebyshev's inequality places at least $3/4$ of the mass in an interval
containing at most $4\sigma+2$ integers. Since $k$ is a mode,
\[
 \mathbb P_\lambda(S=k)
 \ge\frac3{4(4\sigma+2)}
 \ge\frac3{4(4\sqrt{8k/5}+2)}.
\]
The second estimate in Lemma~\ref{lem:appendix-local-mass-estimates} shows
that the last expression is strictly larger than $1/(7\sqrt{k})$, proving
\eqref{eq:linear-local-mass}.
\end{proof}

\begin{lemma}
\label{lem:linear-dense-large}
Suppose that $N\ge8k$ and $k\ge120$. For every $r\ge r_0$,
\begin{equation}
\label{eq:linear-dense-large}
 \frac{T(r)}{E_0}
 \le7\sqrt{k}\,N^{161/120}N^{-6r/N}.
\end{equation}
\end{lemma}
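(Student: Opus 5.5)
We will use the tilted occupancy model from Lemma~\ref{lem:linear-modal-tilt} together with an exponential Markov (Chernoff) bound on the atomic weight, conditioned on $S=k$. Under the uniform distribution on $\F_k(M_n)$, the block excesses $(y_1,\ldots,y_b)$ have the law \eqref{eq:linear-occupancy-law}. This is precisely the law of $(J_1,\ldots,J_b)$ conditioned on $S=k$, for any tilt $\lambda>0$. Hence, for every $\theta>0$,
\[
 \frac{T(r)}{E_0}=\mathbb P_\lambda\Bigl(\sum_i\tbinom{J_i+1}{2}\ge r\Bigm| S=k\Bigr)
 \le\frac{e^{-\theta r}\,\bigl(\mathbb E_\lambda e^{\theta\binom{J_1+1}{2}}\bigr)^b}{\mathbb P_\lambda(S=k)} .
\]
Lemma~\ref{lem:linear-local-mass} bounds the denominator below by $1/(7\sqrt k)$, which produces the factor $7\sqrt k$. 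We take $\theta=6\ln N/N$, so that $e^{-\theta r}=N^{-6r/N}$. It then remains to show
\[
 \Bigl(\mathbb E_\lambda e^{\theta\binom{J_1+1}{2}}\Bigr)^b\le N^{161/120}.
\]

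For the moment generating function we write
\[
 \mathbb E_\lambda e^{\theta\binom{J+1}{2}}=\frac{1}{f(\lambda)}\sum_{j\ge0}\frac{\lambda^j e^{\theta j(j+1)/2}}{(j+1)!}.
\]
The term $j=0$ equals $1/f(\lambda)$. Comparing with $f(\lambda)=1+\lambda/2+\cdots$, we obtain
\[
 \mathbb E e^{\theta\binom{J+1}{2}}-1=\frac1{f(\lambda)}\sum_{j\ge1}\frac{\lambda^j\bigl(e^{\theta j(j+1)/2}-1\bigr)}{(j+1)!}.
\]
The dominant contribution is $j=1$, which gives roughly $\frac{\lambda}{2}(e^{\theta}-1)\approx\lambda\theta/2$. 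Combined with $1+x\le e^x$, this yields
\[
 \Bigl(\mathbb E e^{\theta\binom{J+1}{2}}\Bigr)^b\le\exp\bigl(b\lambda\theta\cdot\tfrac12(1+\text{error})\bigr).
\]
By \eqref{eq:linear-lambda-rational} we have $b\lambda\le\frac{847}{360}k$, and $N\ge8k$, so
\[
 \tfrac12 b\lambda\theta\le\tfrac{847}{720}\cdot\tfrac{6k}{N}\ln N\le\tfrac{847}{960}\ln N .
\]
This leaves slack up to the exponent $161/120$ for the higher-order terms. Those terms are controlled with $\varphi$ from \eqref{eq:linear-elementary-exp} for $e^\theta-1$, and with \eqref{eq:linear-geometric-exp} for $j\ge2$. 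For $j\ge2$ the terms carry $\lambda^j$ with $\lambda\le121/360$, while $e^{\theta j(j+1)/2}$ grows only like $N^{3j(j+1)/N}$.

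\textbf{Main obstacle.} Large $j$ is the difficulty: for $j$ of order $\sqrt N$ the factor $e^{\theta j(j+1)/2}$ is no longer small against $\lambda^j/(j+1)!$. Two facts resolve this. First, $J_i\le k$ on the conditioned event, so $\sum_i\binom{J_i+1}{2}$ can be truncated at $j\le k\le N/8$. Second, on that range,
\[
 \theta\,\tfrac{j(j+1)}{2}\le\tfrac{3j(k+1)\ln N}{N}\le j\cdot\tfrac{3}{8}\ln N\cdot(1+o(1)).
\]
Hence each term is at most $\bigl(\lambda N^{3(k+1)/N}\bigr)^j/(j+1)!$, and the tail is dominated by a convergent exponential series. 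Both truncation and conditioning commute with the Markov step, since we only bound the numerator. The resulting sum must still fit into the stated exponent $161/120$ after multiplication by $b$. This requires the careful rational estimate, which I would relegate to an appendix-style inequality, in the manner of Lemma~\ref{lem:appendix-local-mass-estimates}.
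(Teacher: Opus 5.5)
Your set-up matches the paper's. You condition the independent tilted variables on $S=k$ and use $\mathbf 1_{\{S=k\}}\le\prod_i\mathbf 1_{\{J_i\le k\}}$ to truncate at $j\le k$; this truncation is essential, because the untruncated moment generating function in your first display is infinite. You then apply Markov's inequality with $\theta=6\log N/N$ and take the factor $7\sqrt k$ from Lemma~\ref{lem:linear-local-mass}. Your use of $1+x\le e^x$ is equivalent to the paper's integration of $\frac{d}{ds}\log g_s(\lambda)$. Writing $u_j(s)=\lambda^je^{s\binom{j+1}{2}}/(j+1)!$ and $g_s=\sum_{j=0}^ku_j(s)$, both routes reduce to showing that
\[
 g_\theta(\lambda)-g_0(\lambda)=\int_0^\theta\sum_{j=1}^k\binom{j+1}{2}u_j(s)\,ds
\]
is at most about $\frac{19}{25}\lambda\theta$, uniformly in $k$. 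That estimate is the whole content of the lemma beyond the set-up. Your proposal leaves it unproved, and the one concrete tool you offer for it does not work.

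Linearizing $\theta\binom{j+1}{2}\le3j(k+1)\log N/N$ bounds the $j$th term by $L^j/(j+1)!$ with $L=\lambda N^{3(k+1)/N}$. At $N=8k$ the tilt $\lambda$ is of constant order while $N^{3(k+1)/N}>N^{3/8}$, so $L$ grows like $k^{3/8}$. The series converges, but to roughly $e^L/L$, which is neither $O(\lambda)$ nor bounded in $k$. After multiplication by $b$ it overwhelms the target $N^{161/120}$. Rescuing this would need a cutoff growing with $k$ and a separate explicit treatment of the intermediate range, valid down to $k=120$, $N=960$.

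The idea you are missing is the paper's ratio argument. The ratio $u_{j+1}(s)/u_j(s)=\lambda e^{s(j+1)}/(j+2)$ has logarithm convex in $j$, so on $0\le j<k$ it is maximized at an endpoint. At $j=0$ it is at most $\frac{121}{720}e^{7/160}<\frac12$, and at $j=k-1$ it is at most $2N^{6k/N}/(N-2k)<\frac12$ because $N\ge8k$ and $k\ge120$. Thus the $u_j$ decay geometrically with ratio $\frac12$ over the entire truncated range, giving $\sum_{j=7}^k\binom{j+1}{2}u_j(s)<37u_6(s)$. Explicit bounds on the first six ratios then give $\sum_{j=1}^k\binom{j+1}{2}u_j(s)<\frac{19}{25}\lambda$. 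With \eqref{eq:linear-lambda-rational} and $\theta\le\frac{6\log N}{8k}$ this yields $\frac{19}{25}b\lambda\theta<\frac{161}{120}\log N$.

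Your leading-term count ($\frac{847}{960}$ against $\frac{161}{120}$) does show there is room. However, when $\lambda$ is near $\frac{121}{360}$ and $N$ near $960$, the terms $j\ge2$ add roughly $40$ percent to the $j=1$ term and use up most of that room. They therefore have to be bounded explicitly, not merely shown to form a convergent series.
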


\begin{proof}
Set $\theta=6\log N/N$. For $0\le s\le\theta$, let
\[
 g_s(\lambda):=\sum_{j=0}^k u_j(s),
 \qquad
 u_j(s):=\frac{\lambda^j e^{s \binom{j+1}{2}}}{(j+1)!}.
\]
The ratio estimates in
Lemma~\ref{lem:appendix-tilted-generating-function} give, uniformly for
$0\le s\le\theta$,
\[
 \sum_{j=1}^k \binom{j+1}{2}u_j(s)<\frac{19}{25}\lambda.
\]
Since $g_s(\lambda)\ge1$, it follows that
\[
 \frac{d}{ds}\log g_s(\lambda)
 =\frac{\sum_j \binom{j+1}{2}u_j(s)}{g_s(\lambda)}
 <\frac{19}{25}\lambda.
\]
As $g_0(\lambda)\le f(\lambda)$, integration from $0$ to $\theta$ yields
\begin{equation}
\label{eq:linear-g-ratio}
 \frac{g_\theta(\lambda)}{f(\lambda)}
 \le e^{(19/25)\lambda\theta}.
\end{equation}

Conditioning the independent occupancy variables on $S=k$ gives exactly
the law in~\eqref{eq:linear-occupancy-law}.  Moreover, $S=k$ implies
$J_i\le k$ for every $i$, so
$\mathbf 1_{\{S=k\}}\le\prod_i\mathbf 1_{\{J_i\le k\}}$.  Positivity and
independence therefore give
\begin{align*}
 \mathbb E[e^{\theta\omega(F)}]
 &=\frac{\mathbb E_\lambda\!\left[
      e^{\theta\sum_i \binom{J_i+1}{2}}\mathbf 1_{\{S=k\}}\right]}
        {\mathbb P_\lambda(S=k)}\\
 &\le \mathbb P_\lambda(S=k)^{-1}
       \left(\frac{g_\theta(\lambda)}{f(\lambda)}\right)^b\\
 &<7\sqrt{k}\,e^{(19/25)b\lambda\theta}
 <7\sqrt{k}\,N^{161/120}.
\end{align*}
Here the penultimate inequality uses
Lemma~\ref{lem:linear-local-mass} and~\eqref{eq:linear-g-ratio}, while the
final exponent estimate is the last assertion of
Lemma~\ref{lem:appendix-tilted-generating-function}.

Finally, $T(r)/E_0$ is the probability that a uniformly chosen rank-$k$
flat has atomic weight at least $r$.  Markov's inequality and
$e^{-\theta r}=N^{-6r/N}$ now give~\eqref{eq:linear-dense-large}.
\end{proof}

\begin{corollary}
\label{cor:linear-dense-budget}
If $N\ge8k$, then
\begin{equation}
\label{eq:linear-dense-budget}
 T(r_0)<\frac1{29}E_1.
\end{equation}
Moreover, for every $r\ge r_0$,
\begin{equation}
\label{eq:linear-uniform-tail}
 T(r)<E_1\eta^r.
\end{equation}
\end{corollary}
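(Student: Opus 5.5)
The plan is to split according to the size of $k$, since the two tail estimates already established cover complementary ranges.

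\emph{Small $k$.} If $k<120$, the uniform tail bound \eqref{eq:linear-uniform-tail} is exactly Lemma~\ref{lem:linear-dense-small}, because $1/25000<1$. For the budget \eqref{eq:linear-dense-budget}, take $r=r_0$ there. This gives $T(r_0)<\eta^{r_0}E_1/25000\le E_1/25000<E_1/29$, using $0<\eta\le1$; note that $N\ge8k$ gives $N-3k+1>0$.

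\emph{Large $k$, reduction.} For $k\ge120$ I would start from Lemma~\ref{lem:linear-dense-large} and convert the $E_0$ normalization into $E_1$ via Lemma~\ref{lem:linear-total-star-ratio}, namely $E_0<N^2E_1/(2k)$. This yields
\[
 \frac{T(r)}{E_1}<\frac{7\sqrt k}{2k}\,N^{2+161/120}\,N^{-6r/N}.
\]
It then suffices to show two things:
\begin{itemize}
\item the right side is below $\eta^r$ for every $r\ge r_0$;
\item it is below $1/29$ at $r=r_0$.
\end{itemize}
Since $\eta^r=\exp(-r\log(1/\eta))$, the comparison is between the decay rates $(6\log N)/N$ and $\log(1/\eta)$. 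Using $\log(1/\eta)\le 1/(N-3k+1)$, I would check $6\log N/N\ge 1/(N-3k+1)$ when $N\ge8k\ge960$, which holds since $N-3k+1\ge 5N/8$. Then the ratio (bound)$/\eta^r$ is nonincreasing in $r$, so it is enough to verify both inequalities at $r=r_0$.

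\emph{Large $k$, the endpoint $r=r_0$.} Here $r_0\ge 7b/10-1$ and $b=N-k\ge 7N/8$, so $6r_0/N\ge 6\cdot\frac{7}{10}\cdot\frac78-\frac6N\approx 3.675-o(1)$. Meanwhile the prefactor carries the exponent $2+161/120\approx3.342$, and $\eta^{-r_0}\le e^{r_0/(N-3k+1)}\le e^{(7N/10)/(5N/8)}=e^{1.12}$ is a bounded constant. Thus the required inequality reads
\[
 \frac{7}{2\sqrt k}\,e^{1.12}\,N^{3.342-3.675+6/N}<\frac1{29}.
\]
The factor $N^{-0.33}$ with $N\ge960$, together with $1/\sqrt k$ for $k\ge120$, is expected to win, possibly after splitting into $N$ large and $N$ moderate.

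\emph{Expected obstacle.} This numerical closing is the tight step. I expect $N^{-1/3}/\sqrt{k}$ with $k\ge120$ to supply roughly $1/(10\cdot11)$ against the constant $3.5e^{1.12}\approx10.7$, leaving margin only near the $1/29$ budget. If the direct estimate is too weak, I would use $N\ge8k$ to write $k\ge$ a function of $N$ only when $N$ is near $8k$, and bound $1/\sqrt k$ by $\sqrt{8/N}$ otherwise. I would place this rational verification in the appendix alongside the other estimates.
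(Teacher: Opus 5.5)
Your reduction is the paper's. The small-$k$ case via Lemma~\ref{lem:linear-dense-small} is exactly right. For $k\ge120$, the bound $T(r)/E_1<\frac{7}{2\sqrt k}N^{401/120-6r/N}$ together with $6\log N/N>1/D\ge\log(1/\eta)$, where $D=N-3k+1$, correctly reduces both claims to $r=r_0$.

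The step that fails is the endpoint inequality you then set yourself. You fold $\eta^{-r_0}\le e^{1.12}$ into the $1/29$ budget and try to prove $\frac{7}{2\sqrt k}e^{1.12}N^{-1/3+6/N}<\frac1{29}$. This is false at the admissible corner $k=120$, $N=960$. There $N^{1/3}\sqrt k\approx108$ and $N^{6/N}\approx1.044$, so the left side is about $0.10$, roughly three times $1/29\approx0.0345$. Your own estimate $10.7/110\approx0.097$ already shows this, so it is not ``near the budget''. The corner is attained, so no split into moderate and large $N$ can rescue it. Even the sharper bound $r_0/D\le49/50$ leaves the left side at about $0.09$.

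The two requirements you listed must be closed separately, which is what Lemma~\ref{lem:appendix-dense-budget-endpoints} does.

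\begin{itemize}
\item For \eqref{eq:linear-dense-budget}, no $\eta$ factor appears. Using $N^{1/3}\ge2k^{1/3}$ and $N^{6/N}=e^{\theta}<21/20$ gives the bound $\frac{147}{80}k^{-5/6}$. Then $120^{5/6}>160/3$ yields $441/12800<1/29$. This is the genuinely tight estimate.
\item For \eqref{eq:linear-uniform-tail}, you only need the value at $r_0$ multiplied by $\eta^{-r_0}$ to be below $1$, and there is ample room. That product is less than $\frac1{29}e^{r_0/D}$, and even your crude $e^{1.12}<29$ suffices. The paper uses $r_0/D\le49/50$ to get $3/29$.
\end{itemize}
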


\begin{proof}
For $k<120$, both claims follow immediately from
Lemma~\ref{lem:linear-dense-small}. Suppose that $k\ge120$. Define
\[
 A(r):=\frac{E_0}{E_1}\,7\sqrt{k}\,N^{161/120}N^{-6r/N}.
\]
Lemma~\ref{lem:linear-dense-large} gives $T(r)/E_1\le A(r)$.  The endpoint
and monotonicity estimates in Lemma~\ref{lem:appendix-dense-budget-endpoints}
give
\[
 A(r_0)<\frac1{29}
\]
and, for every $r\ge r_0$,
\[
 A(r)\eta^{-r}\le A(r_0)\eta^{-r_0}<1.
\]
The first display proves~\eqref{eq:linear-dense-budget}; the second gives
\[
 \frac{T(r)}{E_1\eta^r}\le A(r)\eta^{-r}<1,
\]
which is~\eqref{eq:linear-uniform-tail}.
\end{proof}

The last ingredient compares a star with the members outside it. It is a
problem-specific switching argument rather than a generic consequence of
spreadness.

\begin{lemma}
\label{lem:linear-singleton-switching}
Let $N\ge3k$, let $a$ be an atom, and let $F\in\F_k(M_n)$ avoid $a$.
Then the set
\[
 \Gamma_a(F):=
 \{Q\in\E_k(a):\tau(Q)\cap\tau(F)=\varnothing\}
\]
satisfies
\begin{equation}
\label{eq:linear-switching-bound}
 |\Gamma_a(F)|\ge E_1
 \left(\frac{N-3k+1}{N-3k+2}\right)^{\omega(F)}
 =E_1\eta^{\omega(F)}.
\end{equation}
\end{lemma}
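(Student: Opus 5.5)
The plan is to add the atoms of $F$ to a forbidden set one at a time, and to show by a switching argument that each new forbidden atom costs at most a factor $\eta$. For a set $G\subseteq\tau(F)$ of atoms, put
\[
 \Gamma(G):=\{Q\in\E_k(a):\tau(Q)\cap G=\varnothing\}.
\]
Then $\Gamma(\varnothing)=\E_k(a)$ has size $E_1$ by Lemma~\ref{lem:extension-count}, and $\Gamma(\tau(F))=\Gamma_a(F)$. Since $|\tau(F)|=\omega(F)$, it suffices to prove the one-step inequality
\[
 |\Gamma(G\cup\{e\})|\ge\eta\,|\Gamma(G)|
 \qquad (G\subseteq\tau(F),\ e\in\tau(F)\setminus G).
\]
Because $\Gamma(G)$ is the disjoint union of $\Gamma(G\cup\{e\})$ and the set $\Gamma_e(G)$ of members of $\Gamma(G)$ containing $e$, this one-step inequality is equivalent to
\[
 (N-3k+1)\,|\Gamma_e(G)|\le|\Gamma(G\cup\{e\})|.
\]
Here $N-3k+1\ge1$ because $N\ge3k$.

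To prove this, I would use a singleton switch. Write $e=\eps_{xy}$ and $a=\eps_{uv}$. Since $a\nleq F$, we have $e\ne a$, so at least one endpoint of $e$ lies outside $\{u,v\}$; label it $y$. Take $Q\in\Gamma_e(G)$. In $Q$, the point $y$ shares its block with $x$. Moving $y$ into a different block $P$ of $Q$ has the following effects:
\begin{itemize}
\item the number of blocks stays $b$, since the old block still contains $x$, so the rank stays $k$;
\item the atom $a$ is kept, since $y\notin\{u,v\}$;
\item the atom $e$ is destroyed.
\end{itemize}
The only new atoms are the edges $yz$ with $z\in P$. So the new flat lies in $\Gamma(G\cup\{e\})$ whenever $P$ contains no $G$-neighbour of $y$. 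All $G$-neighbours of $y$ lie in the block of $F$ containing $y$, which has at most $k+1$ points. Hence they meet at most $k$ blocks of $Q$. Excluding also the block of $y$ itself leaves at least
\[
 b-1-k=N-2k-1\ge N-3k+1
\]
admissible targets $P$.

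Next I would check that the switch is injective. The point $y$ and the vertex $x$ are fixed by $e$ and do not depend on $Q$. So from the image $Q'$ one recovers $Q$ by moving $y$ back into the block of $Q'$ that contains $x$. Different choices of $(Q,P)$ therefore give different $Q'$. Double counting then gives $(N-3k+1)|\Gamma_e(G)|\le|\Gamma(G\cup\{e\})|$, which is the one-step inequality.

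Iterating over the $\omega(F)$ atoms of $F$ yields~\eqref{eq:linear-switching-bound}. The main point needing care is the choice of the moved endpoint, so that the fixed atom $a$ survives the switch. This is exactly where the hypothesis $a\nleq F$ is used. The bound on the number of blocks containing forbidden neighbours of $y$ uses only that $G\subseteq\tau(F)$ and that every block of $F$ has size at most $k+1$. The argument in fact gives the slightly stronger factor $(N-2k-1)/(N-2k)$ in place of $\eta$.
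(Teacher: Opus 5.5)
Your proof is correct. Its skeleton is the paper's: forbid the atoms of $F$ one at a time, and use an injective one-point switch to get $\ell\,|\Gamma_e(G)|\le|\Gamma(G\cup\{e\})|$, which gives a factor $\ell/(\ell+1)$ per atom. You differ from the paper in two details.

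First, the paper contracts the endpoints of $a$. Then $\tau(F)$ becomes a simple graph on $N-1$ points of maximum degree at most $k$, which needs a separate check at the contracted vertex, and $\E_k(a)$ becomes the set of all $b$-block partitions of that set. You instead stay on $\Om_n$ and preserve $a$ by always moving the endpoint of $e$ outside $a$; such an endpoint exists precisely because $a\notin\tau(F)$.

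Second, the paper moves the point only into a singleton block. Its count is therefore at least $N-2k+1$ singletons minus at most $k$ neighbours, i.e.\ $\ell=N-3k+1$. You allow any block free of $G$-neighbours of $y$, which gives at least $b-1-k=N-2k-1$ targets. Injectivity survives because $Q$ is recovered by returning $y$ to the block of $x$.

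Two small remarks on your count. Your comparison $N-2k-1\ge N-3k+1$ tacitly uses $k\ge2$. That is the paper's setting, and for $k=1$ the set $\Gamma_e(G)$ is empty anyway. Also, $x$ itself is not a $G$-neighbour of $y$ (since $e\notin G$), so you may count $b-k=N-2k$ targets.

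So the contraction buys the paper a uniform model with no choice of endpoint. Your variant buys a strictly better per-step factor. With the refined count, your switch stays nontrivial for all $N\ge2k+1$. This shows that the $N\ge3k$ barrier described in Remark~\ref{rem:linear-method-limit} comes from restricting the switch to singleton targets, not from switching itself.
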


\begin{proof}
Contract the endpoints of $a$. Since $F$ avoids $a$, those endpoints lie in
different blocks of $F$, and $\tau(F)$ becomes a simple graph $G_F$ on
$N-1$ vertices with exactly $\omega(F)$ edges. Every original vertex has
degree at most $k$ inside its block. The contracted vertex has degree equal
to the sum of the ranks of the two original blocks, and this is also at most
$k$. Thus $\Delta(G_F)\le k$.

Members of $\E_k(a)$ become $b$-block partitions of the contracted
$(N-1)$-set. Such a partition belongs to $\Gamma_a(F)$ exactly when all its
blocks are independent in $G_F$. Add the edges of $G_F$ one at a time. Let
$H$ be the graph formed by the edges already added and let $uv$ be the next
edge. Every $H$-proper $b$-block partition has deficiency $k-1$, and hence
at least
\[
 (N-1)-2(k-1)=N-2k+1
\]
singleton blocks.

Consider an $H$-proper partition in which $u$ and $v$ lie in the same
block. At most $k$ singleton vertices are adjacent to $v$ in $H$, leaving
at least $\ell:=N-3k+1$ permissible singleton blocks $\{w\}$. Move $v$ out
of its old block and replace $\{w\}$ by $\{v,w\}$. The resulting partition
is still $H$-proper and separates $u$ from $v$. This map is injective on
pairs consisting of the original partition and $w$: in the image, $w$ is
the unique partner of $v$ in its two-point block.

If $X$ and $Y$ denote the numbers of $H$-proper partitions that put $u,v$
together and apart, respectively, then $Y\ge\ell X$. Adding the edge $uv$
therefore retains a proportion at least
\[
 \frac{Y}{X+Y}\ge\frac{\ell}{\ell+1}=\eta.
\]
Iterating over all $\omega(F)$ edges proves
\eqref{eq:linear-switching-bound}.
\end{proof}

\begin{proof}[Proof of Theorem~\ref{thm:EKR-linear-t1}]
Set
\[
 \A_{\mathrm{sp}}:=\{F\in\A:\omega(F)<r_0\}.
\]
If $\A_{\mathrm{sp}}\ne\varnothing$, its maximum atomic weight $\mu$ satisfies
$\mu+1\le r_0\le7b/10$.

Suppose first that $\A_{\mathrm{sp}}$ is nonempty and has no common atom. By
Corollaries~\ref{cor:linear-sparse-budget} and~\ref{cor:linear-dense-budget},
\[
 |\A|\le|\A_{\mathrm{sp}}|+T(r_0)
 <\left(\frac{22}{23}+\frac1{29}\right)E_1
 =\frac{661}{667}E_1<E_1.
\]
If $\A_{\mathrm{sp}}=\varnothing$, the dense budget alone gives the same strict
conclusion.

It remains to consider the case in which $\A_{\mathrm{sp}}$ has a common atom
$a$. Put
\[
 \C:=\A\cap\E_k(a),
 \qquad
 \B:=\A\setminus\E_k(a).
\]
Every member of $\B$ has atomic weight at least $r_0$.  If
$\B=\varnothing$, then
$\A\subseteq\E_k(a)$ and hence $|\A|\le E_1$.

Assume that $\B\ne\varnothing$, and choose $F\in\B$ minimizing
$\omega(F)$; write $r=\omega(F)$. Then $r\ge r_0$ and
$|\B|\le T(r)$. Corollary~\ref{cor:linear-dense-budget} and
Lemma~\ref{lem:linear-singleton-switching} give
\[
 |\B|\le T(r)<E_1\eta^r\le|\Gamma_a(F)|.
\]
No member of $\Gamma_a(F)$ can belong to $\C$, because $\A$ is
intersecting. Consequently
\[
 |\C|\le E_1-|\Gamma_a(F)|<E_1-|\B|,
\]
and $|\A|<E_1$.

We have proved the desired upper bound. Equality can occur only when
$\B=\varnothing$ and $\A\subseteq\E_k(a)$; it then forces
$\A=\E_k(a)$. Conversely, every full edge-star is intersecting and has
size $E_1$.
\end{proof}

\section{Further problems and conjectures}
\label{sec:further}

We discuss the remaining questions in the same order as the main results.
For $t=1$, Theorem~\ref{thm:EKR-linear-t1} replaces the quadratic
hypothesis of Theorem~\ref{thm:EKR} by $n+1\ge8k$.  The sharp target remains
Czabarka's conjecture, Conjecture~\ref{conj:sharp-EKR-t1}, namely $n\ge2k$.
Thus the present argument settles the correct order and leaves only the
constant-factor gap between $n+1\ge8k$ and $n+1\ge2k+1$.

For $t=1$, the bounded-atom truncation and occupancy tail
remove the quadratic atomic loss and give a linear threshold.  The remaining
barrier is located in the final switching step.

\begin{remark}[Limit of the present linear method]
\label{rem:linear-method-limit}
The constant $8$ is the point at which the estimates in this paper are
closed, but the structural obstruction in the same proof framework occurs
at the smaller constant $3$.  Indeed, in
Lemma~\ref{lem:linear-singleton-switching}, an $H$-proper partition has at
least $N-2k+1$ singleton blocks, while at most $k$ of their vertices are
forbidden by adjacency to the vertex being moved.  Thus the switching has
only
\[
 \ell=(N-2k+1)-k=N-3k+1
\]
guaranteed choices and retains the proportion $\ell/(\ell+1)$ at each step.
This proportion is useful precisely when $\ell\ge1$, equivalently
$N\ge3k$.  Consequently, sharper calculation in the bounded-atom and
occupancy estimates may lower the constant $8$, but, as long as the final
comparison uses this singleton switching, it can at best reach the range
$N\ge3k$ and cannot enter $N\le3k-1$.  This is a limitation of the method,
not a claim that the preceding numerical estimates already prove the
theorem for every $N\ge3k$.  Reaching Czabarka's conjectured range
$N\ge2k+1$ requires a new replacement for the switching step, such as a
global closure-compatible matching or compression.
\end{remark}

For $k>t\ge1$, let $n_0(k,t)$ denote the least integer such that the
conclusion of Theorem~\ref{thm:EKR} holds for every $n\ge n_0(k,t)$.
We next record a higher-intersection obstruction that is independent of the
Hilton--Milner theorem.

\begin{corollary}
\label{cor:EKR-core-obstruction}
Let $t\ge2$, $k=t+1$, and $n=t+3=2k-t+1$.  Then there is a
$t$-intersecting family in $\F_k(M_n)$ of size $2t+3$, whereas a full
$t$-star has size $6$.  In particular, the conclusion of
Theorem~\ref{thm:EKR} need not hold at $n=2k-t+1$.

More generally, a necessary condition for the EKR bound when $k=t+1$ is
\[
 \binom{n+1-t}{2}\ge2t+3.
\]
For the full conclusion including uniqueness of equality, the inequality
must be strict.  Thus
\[
 n_0(t+1,t)\ge
 t+\left\lfloor\frac{1+\sqrt{16t+25}}2\right\rfloor.
\]
\end{corollary}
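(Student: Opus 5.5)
The plan is to build, for $k=t+1$, an explicit $t$-intersecting family of size $2t+3$ out of clique flats and ``compatible splits'', and then compare this with the star size $E_t$.

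\textbf{Setup and star size.} Take $N=n+1$ points, so rank-$(t+1)$ flats are partitions into $b=N-t-1$ blocks. First work at $N=t+4$, so $b=3$. A full $t$-star has size $E_t=\sbinom{n-t}{1}=S(n-t+1,n-t)=\binom{n+1-t}{2}$, which equals $6$ at $n=t+3$. Since two flats satisfy $\rk(A\wedge A')\ge t$ exactly when the common refinement of their partitions has at most $N-t$ blocks, I would phrase every check as a block count: at $N=t+4$ the meet must have at most $4$ blocks.

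\textbf{Construction at $N=t+4$.} Fix a point $x$ and put $\Omega'=\Omega_n\setminus\{x\}$, so $|\Omega'|=t+3$. The family consists of all $3$-block partitions of the form $\{A,B,\{x\}\}$, where $\{A,B\}$ runs over a maximal pairwise compatible (laminar) system of bipartitions of $\Omega'$.
\begin{enumerate}
\item The trivial splits $\{\{y\},\Omega'\setminus\{y\}\}$, for $y\in\Omega'$, give the $t+3$ clique flats $\kk(\Omega_n\setminus\{x,y\})$.
\item The nontrivial splits of a fixed trivalent tree with leaf set $\Omega'$ number $|\Omega'|-3=t$.
\end{enumerate}
This gives $2t+3$ members.

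\textbf{Verification of $t$-intersection.} For two members $\{A,B,\{x\}\}$ and $\{A',B',\{x\}\}$, the meet has blocks $\{x\}$ together with the nonempty sets among $A\cap A'$, $A\cap B'$, $B\cap A'$, $B\cap B'$. Compatibility of the two splits means one of these four intersections is empty, so the meet has at most $4$ blocks and rank at least $t$. This covers clique--clique, clique--split and split--split pairs uniformly, since trivial splits are compatible with everything. Hence the family is $t$-intersecting of size $2t+3>6$ for $t\ge2$.

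\textbf{General $n$.} For $n>t+3$, add the extra points as singleton blocks to every member. Ranks are unchanged, and meets acquire exactly the same extra singletons, so meet ranks are also unchanged. Thus a $t$-intersecting family of size $2t+3$ exists for every $n\ge t+3$.

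\textbf{Threshold consequence.} The EKR bound therefore requires $\binom{n+1-t}{2}\ge 2t+3$. Uniqueness of equality requires strict inequality, because at equality this non-star family would be a second extremal family. Set $M=n+1-t$; the condition fails whenever $M(M-1)\le 4t+6$, i.e.\ $M\le\frac{1+\sqrt{16t+25}}{2}$. Taking $M=\lfloor\frac{1+\sqrt{16t+25}}{2}\rfloor$, which is at least $4$ and so gives $n\ge t+3$, produces a failing $n=M+t-1$. Hence $n_0(t+1,t)\ge t+\lfloor\frac{1+\sqrt{16t+25}}{2}\rfloor$.

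\textbf{Main obstacle.} The step needing the most care is confirming that a maximal compatible split system on $t+3$ leaves has exactly $t$ nontrivial splits; the standard count of internal edges in an unrooted binary tree gives this. I also need the four-cell intersection argument to cover pairs involving trivial splits without a separate case, which it does because a trivial split is compatible with every split.
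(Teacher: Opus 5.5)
Your proposal is correct and uses the paper's construction: the trivial splits together with the $t$ internal-edge splits are exactly the $2t+3$ edge splits of a trivalent tree on $t+3$ leaves. Your four-cell compatibility check, the singleton padding for larger $n$, and the quadratic computation for $n_0(t+1,t)$ all match the paper's argument. One point is left implicit: you need to show the family is not a full $t$-star in the equality case. The paper settles this by noting that the singleton splits, your clique flats $\kk(\Om_n\setminus\{x,y\})$, already have a meet of rank zero.
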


\begin{proof}
Let $W$ be a $(t+3)$-set and choose a trivalent tree with leaf set $W$,
that is, a tree in which every non-leaf vertex has degree three.  Such a
tree has $2(t+3)-3=2t+3$ edges.  Deleting an edge gives an unordered
bipartition $\sigma=\{W_1,W_2\}$ of $W$ into two nonempty parts; let
$Q_\sigma$ be the flat whose only
nonsingleton blocks are $W_1$ and $W_2$, with every vertex outside $W$
left singleton.  Then
\[
 \rk(Q_\sigma)=(|W_1|-1)+(|W_2|-1)=t+1=k.
\]
For two edges of a tree, one component of the first deletion is disjoint from
one component of the second deletion.  Hence the common refinement of two distinct corresponding two-block partitions has exactly
three nonempty blocks on $W$, and therefore
\[
 \rk(Q_\sigma\wedge Q_{\sigma'})=|W|-3=t.
\]
Thus the $2t+3$ flats $Q_\sigma$ form a $t$-intersecting family.  Moreover,
each leaf edge of the tree gives the singleton split
$\{x\}\mid(W\setminus\{x\})$.  The common refinement of these singleton
splits is discrete on $W$, so the family has common meet of rank zero and is
not a full $t$-star.

At $n=t+3$ a full $t$-star has size
\[
 E_t=\sbinom{3}{1}=6.
\]
The same compatible-split construction embeds for every larger $n$ by
adding singleton vertices, while for $k=t+1$ a full $t$-star has size
\[
 \sbinom{n-t}{1}=\binom{n+1-t}{2}.
\]
The final lower bound is the least integral solution of the corresponding
strict inequality.
\end{proof}

At the level of the proof, a failed general-$t$ simplification is witnessed
by one of at most $m_i=\binom{i+1}{2}$ atoms of a rank-$i$ flat, which
produces the quadratic scale in Theorem~\ref{thm:EKR}.

\begin{problem}
Determine the least $n_0(k,t)$ for which the conclusion of
Theorem~\ref{thm:EKR} holds.
\end{problem}

The explicit Hilton--Milner threshold in Theorem~\ref{thm:HM} is
$n=O(k^6)$.  It is natural to ask whether the correct threshold is linear.

\begin{problem}
Determine the least $n_0^{\mathrm{HM}}(k)$ for which the conclusion of
Theorem~\ref{thm:HM} holds.  In particular, decide whether
$n_0^{\mathrm{HM}}(k)=O(k)$.
\end{problem}
In the Hilton--Milner argument, constructing $m_k+1$ disjoint petals above
a rank-two cover costs a factor $m_k^2$, and summing over all possible covers
creates the principal loss.

\bigskip
\noindent\textbf{Declaration of AI usage.}
The authors used generative-AI tools during preliminary exploration and
language editing.  The authors verified all mathematical statements and
proofs and take full responsibility for the contents of the manuscript.

\clearpage
\appendix
\section{Technical estimates for the EKR bounds}
\label{app:technical-estimates}

This appendix contains only the one-variable inequalities and finite
rational estimates used in the EKR proofs.  All combinatorial reductions,
the occupancy representation, the exponential tilting argument, and the
switching step remain in Section~\ref{sec:linear-ekrproof}.  We give the details
here to keep the explicit constants fully verifiable without obscuring the
proof structure in the main text.

\subsection{Elementary exponential estimates}

\begin{lemma}
\label{lem:appendix-exp-bounds}
The inequalities \eqref{eq:linear-elementary-exp} and
\eqref{eq:linear-geometric-exp} hold.  Moreover, $\varphi$ is increasing on
$[0,3)$ and $e<3$.
\end{lemma}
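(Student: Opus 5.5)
The plan is to treat the four assertions separately, each by elementary one-variable calculus or a series comparison. The three claims are: $e^x\le\varphi(x)=1+x+\frac{x^2}{2(1-x/3)}$ on $[0,3)$; $e^x\le 1/(1-x)$ on $[0,1)$; and $\varphi$ is increasing on $[0,3)$ with $e\le\varphi(1)=11/4<3$.

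For \eqref{eq:linear-elementary-exp}, compare Taylor coefficients termwise. Write
\[
 e^x=1+x+\sum_{j\ge2}\frac{x^j}{j!}
 \qquad\text{and}\qquad
 \frac{x^2}{2(1-x/3)}=\sum_{j\ge2}\frac{x^j}{2\cdot3^{j-2}}
 \qquad(0\le x<3),
\]
the second being a convergent geometric series. It then suffices to show $j!\ge 2\cdot3^{j-2}$ for every $j\ge2$. This holds with equality at $j=2,3$. The inductive step multiplies the left side by $j+1\ge3$ and the right side by $3$. Since all terms are nonnegative for $x\ge0$, the inequality follows.

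For \eqref{eq:linear-geometric-exp}, use $1/(1-x)=\sum_{j\ge0}x^j$ and $x^j/j!\le x^j$ for $0\le x<1$. Alternatively, show that $(1-x)e^x$ has derivative $-xe^x\le0$ and equals $1$ at $0$.

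For monotonicity, write $\varphi'(x)=1+\frac{d}{dx}\frac{x^2}{2(1-x/3)}$. The second summand is a product of nonnegative increasing functions $x^2/2$ and $1/(1-x/3)$ on $[0,3)$, so it is increasing; hence $\varphi'\ge1>0$. Equivalently, the series representation above has nonnegative coefficients. Finally, evaluate $\varphi(1)=1+1+\frac{1}{2\cdot 2/3}=2+\frac34=\frac{11}{4}$, and $e=e^1\le\varphi(1)<3$ by the first inequality.

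The only step requiring any care is the coefficient comparison $j!\ge2\cdot3^{j-2}$, which is immediate by induction. There is no real obstacle here; the lemma is purely routine.
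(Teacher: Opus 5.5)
Your proof is correct and follows the paper's argument essentially verbatim: the same coefficient comparison $j!\ge2\cdot3^{j-2}$ against the geometric series for \eqref{eq:linear-elementary-exp}, the same termwise bound $1/j!\le1$ for \eqref{eq:linear-geometric-exp}, and the same evaluation $e\le\varphi(1)=11/4<3$. The only cosmetic difference is in the monotonicity step. The paper computes $\varphi'(x)=1+\frac{3x(6-x)}{2(3-x)^2}$ explicitly, whereas you observe that the nonlinear part is a product of nonnegative increasing functions (or has nonnegative power-series coefficients), which is equally valid.
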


\begin{proof}
For every integer $m\ge2$,
\[
 m!\ge2\cdot3^{m-2}.
\]
Hence, for $0\le x<3$,
\[
 e^x
 =1+x+\sum_{m\ge2}\frac{x^m}{m!}
 \le1+x+\frac{x^2}{2}\sum_{j\ge0}\left(\frac{x}{3}\right)^j
 =\varphi(x),
\]
which proves~\eqref{eq:linear-elementary-exp}.  For $0\le x<1$, the
coefficientwise bound $1/m!\le1$ gives
\[
 e^x=\sum_{m\ge0}\frac{x^m}{m!}
 \le\sum_{m\ge0}x^m=\frac1{1-x},
\]
proving~\eqref{eq:linear-geometric-exp}.  Finally,
\[
 \varphi'(x)=1+\frac{3x(6-x)}{2(3-x)^2}>0
 \qquad(0\le x<3),
\]
so $\varphi$ is increasing, and
$e\le\varphi(1)=11/4<3$.
\end{proof}

\subsection{The dense tail for bounded rank}

\begin{lemma}
\label{lem:appendix-small-tail-estimate}
Suppose that $N\ge8k$, $9\le k<120$, $r\ge r_0$, and
\[
 d=\left\lceil\frac{2r}{k}\right\rceil\le k+1.
\]
Then the quantity $R_d(N,k)$ defined in~\eqref{eq:linear-Rd} satisfies
\[
 R_d(N,k)\eta^{-r}<\frac1{25000}.
\]
\end{lemma}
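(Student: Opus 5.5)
We must show $R_d(N,k)\eta^{-r}<1/25000$ under $N\ge8k$, $9\le k<120$, $r\ge r_0=\lfloor 7b/10\rfloor$, and $d=\lceil 2r/k\rceil\le k+1$. The plan is to reduce to a finite one-parameter check in $d$, then dominate the $N$-dependence monotonically.

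\textbf{Step 1: reduce $\eta^{-r}$ to a function of $d$.} Since $d\ge 2r/k$, we have $r\le kd/2$, and $\eta<1$. Hence
\[
 \eta^{-r}\le\eta^{-kd/2}=\left(1+\frac1{N-3k+1}\right)^{kd/2}\le\exp\!\left(\frac{kd}{2(N-3k+1)}\right).
\]
With $N\ge8k$ this is at most $e^{kd/(2(5k+1))}<e^{d/10}$. By Lemma~\ref{lem:appendix-exp-bounds} this can be bounded by $\varphi(1/10)^d$ or simply by $(1.106)^d$. The lower bound $r\ge r_0\ge 7(N-k)/10-1\ge 49k/10-1$ gives $d\ge 2r_0/k\ge 9.8-2/k$, so $d\ge10$. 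This matches the proof of Lemma~\ref{lem:linear-dense-small}, and it is the only place where the lower bound on $r$ is used.

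\textbf{Step 2: bound $R_d$ uniformly in $N$.} We have
\[
 R_d=\frac{2^{d-2}N(N-d+1)(k-1)_{d-2}}{d!\,b^{d-2}},
\]
where $(k-1)_{d-2}\le k^{d-2}$. Using $b=N-k\ge 7N/8$, the quotient $N(N-d+1)/b^{d-2}$ is at most $(8/7)^2(8/7)^{d-4}N^{-(d-4)}$. Since $d\ge10$, this is decreasing in $N$, so the worst case is $N=8k$. This gives
\[
 R_d\le\frac{2^{d-2}(8k)^2k^{d-2}}{d!\,(7k)^{d-2}}=\frac{64k^2}{d!}\left(\frac27\right)^{d-2}.
\]
The crude bound $k\le 119$ leaves the factor $64k^2$. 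Since $d\ge10$, we get $d!\ge 10!$, and $(2/7)^{8}\approx 4.4\times10^{-5}$, so $R_d\le 64\cdot 119^2\cdot4.4\times10^{-5}/3628800$. This is astronomically below $1/25000$ even after multiplying by $e^{d/10}$.

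\textbf{Step 3: handle growth in $d$.} Each increase of $d$ by one multiplies the bound by $(2/7)e^{1/10}/(d+1)<1$, so the maximum over $d\ge10$ occurs at $d=10$. A single rational evaluation then finishes the argument.

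\textbf{Main obstacle.} The delicate point is Step 1: one must correctly link $r$ to $d$ in both directions. The upper bound $r\le kd/2$ controls $\eta^{-r}$, while $d\ge10$ comes from $r_0$ and requires care with floors, namely $r_0\ge 7b/10-1$. If the constraint $N\ge 8k$ turns out to be needed more sharply, for instance to keep $kd/(N-3k+1)$ small, I would retain the exact expression $\eta^{-kd/2}$ and verify the resulting finite table at $d=10$ directly.
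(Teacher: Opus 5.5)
Your proposal is correct and follows essentially the paper's route. The paper removes the $N$-dependence by a logarithmic-derivative monotonicity argument and then sets $N=8k$; you instead decouple it with the cruder bounds $N-d+1\le N$, $b\ge 7N/8$ and $1+x\le e^x$. Both arguments then use $r\le dk/2$ and a ratio test in $d$, and reach the identical worst-case bound $2^{14}e\,k^2/(10!\,7^8)$ at $d=10$.

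One correction: this bound is not astronomically below $1/25000$. At $k=119$ it is about $1.1\times10^{-5}\cdot e\approx3.0\times10^{-5}$, against $4\times10^{-5}$; the paper, using $e<3$ and $k<120$, gets $4096/121060821\approx3.4\times10^{-5}$. The final rational evaluation is therefore tight and must actually be carried out.
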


\begin{proof}
For fixed $k,d,r$, the expression $R_d(N,k)\eta^{-r}$ decreases for
$N\ge8k$ and $d\ge10$.  Indeed, the logarithmic derivative of its only
nonconstant algebraic factor satisfies
\[
 \frac1N+\frac1{N-d+1}<\frac{d-2}{N-k},
\]
because $d\le k+1$ gives
\[
 \frac1N+\frac1{N-d+1}
 \le\frac2{N-k}<\frac{d-2}{N-k}.
\]
Also, $\eta=(N-3k+1)/(N-3k+2)$ increases with $N$.  We may therefore set
$N=8k$.  Since $d=\lceil2r/k\rceil$, we have $r\le dk/2$.  Put
\[
 H_d(k):=R_d(8k,k)
 \left(\frac{5k+2}{5k+1}\right)^{dk/2}.
\]
Then $R_d(N,k)\eta^{-r}\le H_d(k)$.  For $10\le d\le k$, direct division
of consecutive terms gives
\begin{align*}
 \frac{H_{d+1}(k)}{H_d(k)}
 &=\frac{2(8k-d)(k-d+1)}{7k(d+1)(8k-d+1)}
   \left(1+\frac1{5k+1}\right)^{k/2}\\
 &\le\frac{2}{7(d+1)}e^{1/10}
 \le\frac{20}{63(d+1)}<1,
\end{align*}
where $1+x\le e^x$ and~\eqref{eq:linear-geometric-exp} were used.  Thus
$H_d(k)\le H_{10}(k)$, also when $d=k+1$, and
\[
 H_{10}(k)
 <\frac{2^{14}e}{10!\,7^8}k^2
 <\frac{2^{14}\cdot3\cdot120^2}{10!\,7^8}
 =\frac{4096}{121060821}<\frac1{25000}.
\]
This proves the claim.
\end{proof}

\subsection{A local-mass estimate for the modal tilt}

\begin{lemma}
\label{lem:appendix-local-mass-estimates}
If $0<\lambda\le121/360$, then
\[
 \frac{e^\lambda(\lambda^2-\lambda+1)-1}{e^\lambda-1}
 \le\frac23\lambda.
\]
Moreover, for every $k\ge120$,
\[
 \frac3{4(4\sqrt{8k/5}+2)}>\frac1{7\sqrt{k}}.
\]
\end{lemma}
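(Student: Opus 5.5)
The first inequality becomes a power-series comparison. The second is a one-line rational estimate in $\sqrt k$. I would treat them separately.

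\emph{First inequality.} I would expand both sides in powers of $\lambda$. Multiplying the exponential series by $1-\lambda+\lambda^2$, the coefficient of $\lambda^n$ is $(1-n+n(n-1))/n!=(n-1)^2/n!$. Hence
\[
 e^\lambda(\lambda^2-\lambda+1)-1=\sum_{n\ge2}\frac{(n-1)^2}{n!}\lambda^n .
\]
Since $e^\lambda-1>0$ for $\lambda>0$, the claim is equivalent to $e^\lambda(\lambda^2-\lambda+1)-1\le\frac23\lambda(e^\lambda-1)$. The right side is $\frac23\sum_{n\ge1}\lambda^{n+1}/n!=\sum_{n\ge2}\frac{2n}{3}\lambda^n/n!$. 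So it suffices to show
\[
 \sum_{n\ge2}\Bigl((n-1)^2-\tfrac{2n}{3}\Bigr)\frac{\lambda^n}{n!}\le0 .
\]
The $n=2$ coefficient is $-1/3$, and every later coefficient is positive. After dividing by $\lambda^2/6$, the claim reduces to
\[
 \lambda\cdot 2+\sum_{n\ge4}\frac{6}{n!}\Bigl((n-1)^2-\tfrac{2n}{3}\Bigr)\lambda^{n-2}\le1 .
\]
The left side is increasing in $\lambda$, so it is enough to check it at $\lambda=121/360<0.3362$. There the $n=3$ term is below $0.673$ and the $n=4$ term is $\frac{19}{12}\lambda^2<0.179$. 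For $n\ge5$ I would use the crude bound $(n-1)^2\le n(n-1)$, so those terms are at most $6\lambda^{n-2}/(n-2)!$. Their sum is at most $6(e^\lambda-1-\lambda)<6\lambda^2/2\cdot(1-\lambda/3)^{-1}$ by \eqref{eq:linear-elementary-exp}. That is too large, so instead I would keep the exact $n=5$ term $\frac{6}{120}\cdot\frac{38}{3}\lambda^3<0.025$. I would then bound $n\ge6$ geometrically by its first term $\frac{6}{720}\cdot 21\lambda^4<0.003$ times $(1-\lambda)^{-1}$. The total is below $0.673+0.179+0.025+0.005<1$. The only real work is this finite bookkeeping, and the main risk is keeping enough exact low-order terms; the margin is about $0.12$.

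\emph{Second inequality.} After clearing denominators, it is equivalent to $21\sqrt k>16\sqrt{8/5}\,\sqrt k+8$, that is, $\bigl(21-16\sqrt{8/5}\bigr)\sqrt k>8$. Since $1.265^2=1.600225>8/5$, we have $16\sqrt{8/5}<20.24$, so the bracket exceeds $0.76$. For $k\ge120$ we have $\sqrt k\ge\sqrt{120}>10.95$, because $10.95^2=119.9025$. Hence the left side exceeds $0.76\cdot10.95=8.322>8$, which proves the claim.
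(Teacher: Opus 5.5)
Your proof is correct. The second inequality is handled essentially as in the paper, which also reduces to $16\sqrt{8/5}+8/\sqrt{k}<21$ but uses the exact bounds $\sqrt{8/5}<19/15$ and $8/\sqrt{120}<11/15$ in place of your decimals.

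For the first inequality you take a genuinely different route. The paper multiplies through by $3(e^\lambda-1)>0$. This turns the claim into the single bound $e^\lambda\le(3-2\lambda)/(3-5\lambda+3\lambda^2)$. It then checks that this rational function dominates the majorant $\varphi$ of \eqref{eq:linear-elementary-exp}. The difference equals $\lambda^2(3-7\lambda-3\lambda^2)/\bigl(2(3-\lambda)(3\lambda^2-5\lambda+3)\bigr)$, so everything reduces to the positivity of $3-7\lambda-3\lambda^2$ on the interval, checked at the endpoint $121/360$.

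You instead compare Taylor coefficients via $e^\lambda(\lambda^2-\lambda+1)-1=\sum_{n\ge2}(n-1)^2\lambda^n/n!$. You note that only the $n=2$ coefficient of the difference is negative, and evaluate the resulting increasing series at the endpoint. Its value there is roughly $0.88$, so your bookkeeping has the margin you claim.

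The paper's version gives an exact rational certificate with no tail to control. It also reuses the $\varphi$ bound already needed in Lemma~\ref{lem:appendix-tilted-generating-function}. Yours needs no auxiliary majorant of $e^x$ and shows transparently why the inequality holds for small $\lambda$: a single negative coefficient against positive ones. The cost is decimal estimates and a tail bound.

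For that tail, state explicitly why the terms with $n\ge6$ shrink by a factor at most $\lambda$. The reason is that the ratio of consecutive coefficients $\frac{6}{n!}\bigl((n-1)^2-\frac{2n}{3}\bigr)$ is below $1$ for $n\ge6$ (it equals $94/441$ at $n=6$). The abandoned detour through $6(e^\lambda-1-\lambda)$ can simply be dropped.
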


\begin{proof}
For $0\le x\le121/360$, the polynomial $3-7x-3x^2$ is decreasing and
\[
 3-7\left(\frac{121}{360}\right)
 -3\left(\frac{121}{360}\right)^2=\frac{13319}{43200}>0.
\]
Also $3x^2-5x+3>0$, since its discriminant is $-11$.  Consequently,
\[
 \frac{3-2x}{3-5x+3x^2}-\varphi(x)
 =\frac{x^2(3-7x-3x^2)}
        {2(3-x)(3x^2-5x+3)}\ge0.
\]
Together with~\eqref{eq:linear-elementary-exp}, this gives
\[
 e^\lambda\le\frac{3-2\lambda}{3-5\lambda+3\lambda^2}.
\]
Therefore
\begin{align*}
 &3(e^\lambda-1)
 \left(\frac23\lambda-
 \frac{e^\lambda(\lambda^2-\lambda+1)-1}{e^\lambda-1}\right)\\
 &\qquad=3-2\lambda-e^\lambda(3-5\lambda+3\lambda^2)\ge0,
\end{align*}
which proves the first assertion.

For the second assertion, it is enough to show
\[
 16\sqrt{\frac85}+\frac8{\sqrt{k}}<21.
\]
Since $\sqrt{8/5}<19/15$ and, for $k\ge120$,
$8/\sqrt{k}\le8/\sqrt{120}<11/15$, the left side is smaller than
\[
 16\cdot\frac{19}{15}+\frac{11}{15}=21.
\]
\end{proof}

\subsection{The tilted generating function}

\begin{lemma}
\label{lem:appendix-tilted-generating-function}
Suppose that $N\ge8k$, $k\ge120$, put $b=N-k$, and let $\lambda$ satisfy
\eqref{eq:linear-lambda-rational}.  Set
\[
 \theta=\frac{6\log N}{N},
 \qquad
 u_j(s)=\frac{\lambda^j e^{s \binom{j+1}{2}}}{(j+1)!}
 \quad(0\le s\le\theta).
\]
Then
\begin{equation}
\label{eq:linear-theta-rational}
 \theta<\frac7{160},
\end{equation}
and, uniformly for $0\le s\le\theta$,
\[
 \sum_{j=1}^k \binom{j+1}{2}u_j(s)<\frac{19}{25}\lambda.
\]
Moreover,
\[
 \frac{19}{25}b\lambda\theta
 <\frac{161}{120}\log N.
\]
\end{lemma}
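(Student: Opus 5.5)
The statement consists of three elementary numerical claims, and I will verify them in order.

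\emph{Bound on $\theta$.} Since $N\ge8k\ge960$ and $x\mapsto\log x/x$ is decreasing for $x\ge e$, we have
\[
 \theta=\frac{6\log N}{N}\le\frac{6\log960}{960}.
\]
Using $\log960<6.87$, this gives $\theta<41.3/960\approx0.043<7/160=0.04375$. The margin is small but sufficient. If I want to avoid decimals, I would use $\log960<\log1024=10\log2<6.94$, which gives $6\cdot6.94/960\approx0.04338<0.04375$.

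\emph{Uniform bound on the weighted sum.} Put $x_j:=\binom{j+1}{2}u_j(s)$. Then
\[
 x_1=\lambda e^{s}/2\le\lambda e^{\theta}/2<\frac{\lambda}{2}\,e^{7/160}.
\]
For $j\ge1$ the consecutive ratio is
\[
 \frac{x_{j+1}}{x_j}=\frac{(j+2)\lambda}{j(j+2)}\,e^{s(j+1)}=\frac{\lambda}{j}\,e^{s(j+1)}.
\]
Here I used $\binom{j+2}{2}/\binom{j+1}{2}=(j+2)/j$ and $(j+1)!/(j+2)!=1/(j+2)$.

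The difficulty is that $e^{s(j+1)}$ becomes large for $j$ near $k$, since $s(j+1)$ can reach roughly $6k\log N/N\le\tfrac34\log N$. This is polynomially large in $N$. It is dominated by the factorial decay $\lambda^j/(j+1)!$ only in aggregate, so a single uniform ratio will not work and I will split the range of $j$.

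For small $j$, say $j\le J$ with $J$ a fixed constant such as $5$, the ratio is at most $\lambda e^{(J+1)\theta}/j\le(121/360)\,e^{6\cdot7/160}$. This is below $0.44$, so the terms decrease geometrically.

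For larger $j$, I would bound $x_j$ directly. We have
\[
 x_j\le\frac{\lambda^j\,e^{\theta j^2}}{2(j-1)!}
\]
and $e^{\theta j^2}\le e^{\theta kj}\le N^{6kj/N}\le N^{3j/4}$. This is too crude by itself. Instead I will write $e^{\theta(j+1)}\le N^{6(k+1)/N}\le N^{0.76}$ and compare it with $j\ge J$. That still fails for large $N$, so I will use a sharper route: $\theta(j+1)\le6(k+1)\log N/N$, and since $k\le N/8$ this is at most $0.76\log N$. The factorial then gives $\lambda/j\le\lambda/J$.

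On reflection, the cleaner approach is to note that the ratio $(\lambda/j)e^{s(j+1)}$ is log-convex in $j$. It is therefore enough to check the endpoints. For $j=k$ we have
\[
 \frac{\lambda}{k}\,e^{\theta(k+1)}\le\frac{2(k+1)}{(N-2k)k}\,N^{6(k+1)/N}.
\]
Because $k+1\le N/8+1$, this is about $\frac{4}{3N}N^{0.76}<1$ for $N\ge960$. With the ratio below $0.44$ on the initial stretch and below $1$ throughout, a geometric bound
\[
 \sum_j x_j\le x_1\bigl(1+0.44+0.44^2+\dots\bigr)
\]
is justified on the decreasing initial segment. The tail past the minimum of the ratio is controlled by the log-convexity estimate.

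This yields
\[
 \sum_j x_j\le\frac{\lambda}{2}\,e^{7/160}\cdot\frac{1}{1-0.44}<0.94\lambda,
\]
which is too weak, since we need $\tfrac{19}{25}\lambda=0.76\lambda$. So the geometric factor has to be sharpened by using the first ratio exactly: $x_2/x_1=\lambda e^{2s}\le0.37$ and $x_3/x_2=(\lambda/2)e^{3s}\le0.19$, with still smaller ratios afterwards. This gives
\[
 \sum_j x_j\le x_1\bigl(1+0.37+0.37\cdot0.19\cdot1.3\bigr)\approx0.52\cdot1.46\,\lambda\approx0.76\lambda.
\]
That is tight, so I will make the arithmetic precise with exact rationals. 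This step is the main obstacle.

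\emph{Exponent bound.} Using \eqref{eq:linear-lambda-rational}, $b\lambda\le\frac{847}{360}k$ and $k\le N/8$, so
\[
 \frac{19}{25}\,b\lambda\theta\le\frac{19}{25}\cdot\frac{847}{360}\cdot\frac{k}{N}\cdot6\log N\le\frac{19\cdot847\cdot6}{25\cdot360\cdot8}\log N.
\]
The constant is $96558/72000\approx1.3411$, which is below $161/120\approx1.3417$. This finishes the third claim.
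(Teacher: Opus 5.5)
You prove $\theta<7/160$ and $\frac{19}{25}b\lambda\theta<\frac{161}{120}\log N$ exactly as the paper does; your $96558/72000$ is the paper's $16093/12000$. For the middle claim you also have the paper's framework: the ratio $x_{j+1}/x_j=\lambda e^{s(j+1)}/j$, its log-convexity in $j$, and explicit treatment of the first few ratios. But the middle claim is not actually proved, and the estimate you sketch fails as written.

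The assertion ``with still smaller ratios afterwards'' is false, by the very log-convexity you invoke: the ratio decreases and then increases again. At $j=k-1$ it is about $\frac{2}{N-2k}N^{6k/N}\approx0.49$ for $N=960$, $k=120$, $\lambda=121/360$, $s=\theta$, all of which are admissible. Your endpoint computation also drops a factor $2$: $N-2k\ge3N/4$ gives $\frac{2}{N-2k}\le\frac{8}{3N}$, not $\frac{4}{3N}$. Hence any single geometric factor covering all $j\ge3$ is about $2$, which turns your estimate into roughly $0.79\lambda$. Knowing only that the endpoint ratio is below $1$ gives no control of the tail at all. The head is also slightly off: $x_1\le\frac12e^{7/160}\lambda\approx0.5224\lambda$, not $0.52\lambda$, and $x_3/x_2\le\frac{121}{720}e^{21/160}\approx0.1916>0.19$. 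With $0.5224$ and your factor $1.3$ the bound is about $0.763\lambda>\frac{19}{25}\lambda$.

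Because the true margin is only about half a percent, you must treat the small-$j$ and large-$j$ ranges separately, which is what the paper does. Bound the ratios individually for $1\le j\le5$, by $0.367$, $0.192$, $0.134$, $0.105$, $0.088$. This gives $x_1+\dots+x_6<0.7565\lambda$ and $x_6<5\cdot10^{-5}\lambda$. Then use log-convexity only on $[6,k-1]$, where both endpoint ratios are below $\frac12$ (about $0.08$ and $0.49$). So $\sum_{j\ge7}x_j\le x_6$ and the total is below $0.757\lambda$. The paper runs the same scheme with the $u_j$-ratios, which are all below $\frac12$; this gives the weight $\sum_{m\ge1}\binom{m+7}{2}2^{-m}=37$ on $u_6$. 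It makes each exponential rigorous via the rational majorant $\varphi(x)=1+x+\frac{x^2}{2(1-x/3)}$, arriving at $\frac{4685767}{6166160}\lambda<\frac{19}{25}\lambda$.
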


\begin{proof}
The function $x\mapsto\log x/x$ decreases for $x>e$.  Moreover,
\[
 e^{7/10}>1+\frac7{10}+\frac{(7/10)^2}{2}
 +\frac{(7/10)^3}{6}=\frac{12013}{6000}>2,
\]
so $\log2<7/10$ and
\[
 \theta\le\frac{6\log960}{960}
 <\frac{6\log1024}{960}<\frac7{160}.
\]
Consecutive terms satisfy
\[
 \frac{u_{j+1}(s)}{u_j(s)}
 =\frac{\lambda e^{s(j+1)}}{j+2}.
\]
The logarithm of the right side is convex in $j$, so its maximum for
$0\le j<k$ is attained at an endpoint.  At $j=0$,
\eqref{eq:linear-lambda-rational}, \eqref{eq:linear-theta-rational}, and
\eqref{eq:linear-geometric-exp} give
\[
 \frac{u_1(s)}{u_0(s)}
 \le\frac{121}{720}e^{7/160}
 \le\frac{121}{720}\frac{160}{153}<\frac12.
\]
At $j=k-1$,
\[
 \frac{u_k(s)}{u_{k-1}(s)}
 \le\frac{2N^{6k/N}}{N-2k}.
\]
Writing $x=N/k\ge8$, we have $6/x-1\le-1/4$, $k\ge120$,
$x-2\ge6$, and $x^{1/x}$ decreasing for $x\ge e$.  Hence
\[
 \frac{2N^{6k/N}}{N-2k}
 =\frac{2k^{6/x-1}x^{6/x}}{x-2}
 \le\frac{8^{3/4}}{3\,120^{1/4}}<\frac12,
\]
where the last inequality follows, after taking fourth powers, from
$8192<9720$.  Therefore
\begin{equation}
\label{eq:linear-geometric-u}
 \max_{0\le j<k}\frac{u_{j+1}(s)}{u_j(s)}<\frac12.
\end{equation}

We next sharpen the first six ratios.  Since $\varphi$ is increasing,
\eqref{eq:linear-elementary-exp} gives
\begin{align*}
 \frac{u_1(s)}{\lambda}
 &\le\frac12\varphi\left(\frac7{160}\right)
 =\frac{158129}{302720}<\frac{23}{44},\\
 \frac{u_2(s)}{u_1(s)}
 &\le\frac{121}{1080}\varphi\left(\frac7{80}\right)
 =\frac{182347}{1491200}<\frac6{49},\\
 \frac{u_3(s)}{u_2(s)}
 &\le\frac{121}{1440}\varphi\left(\frac{21}{160}\right)
 =\frac{750563}{7833600}<\frac1{10}.
\end{align*}
Also,
\[
 \lambda e^{6s}
 \le\frac{121}{360}\varphi\left(\frac{21}{80}\right)
 =\frac{1837627}{4204800}<\frac7{16}.
\]
Thus, for $3\le j\le5$,
\[
 \frac{u_{j+1}(s)}{u_j(s)}
 \le\frac7{16(j+2)}\le\frac1{j+8}.
\]
Multiplying these ratio bounds gives
\begin{equation}
\label{eq:linear-first-six-rational}
\begin{aligned}
 \sum_{j=1}^6 \binom{j+1}{2}u_j(s)
 &<\frac{23}{44}\lambda\Bigg(
 1+3\frac6{49}+6\frac6{49}\frac1{10}
 +10\frac6{49}\frac1{10}\frac1{11}\\
 &\hspace{30mm}
 +15\frac6{49}\frac1{10}\frac1{11}\frac1{12}
 +21\frac6{49}\frac1{10}\frac1{11}\frac1{12}\frac1{13}
 \Bigg)\\
 &=\frac{1171229}{1541540}\lambda,\\
 u_6(s)
 &<\frac{23}{44}\frac6{49}\frac1{10}\frac1{11}\frac1{12}\frac1{13}
 \lambda
 =\frac{23}{6166160}\lambda.
\end{aligned}
\end{equation}
By~\eqref{eq:linear-geometric-u},
\[
 \sum_{j=7}^k \binom{j+1}{2}u_j(s)
 <u_6(s)\sum_{m\ge1}\binom{m+7}{2}2^{-m}
 =37u_6(s),
\]
where
$\sum_{m\ge1}2^{-m}=1$, $\sum_{m\ge1}m2^{-m}=2$, and
$\sum_{m\ge1}m^2 2^{-m}=6$.  Combining this with
\eqref{eq:linear-first-six-rational}, we obtain
\[
 \sum_{j=1}^k \binom{j+1}{2}u_j(s)
 <\frac{4685767}{6166160}\lambda
 <\frac{19}{25}\lambda.
\]
Finally, \eqref{eq:linear-lambda-rational} and $N\ge8k$ give
\[
 \frac{19}{25}\frac{b\lambda}{k}\frac{6k}{N}
 \le\frac{19}{25}\frac{847}{360}\frac68
 =\frac{16093}{12000}<\frac{161}{120}.
\]
Multiplication by $\log N$ proves the final assertion.
\end{proof}

\subsection{Closing the dense budget}

\begin{lemma}
\label{lem:appendix-dense-budget-endpoints}
Suppose that $N\ge8k$ and $k\ge120$, and define
\[
 A(r)=\frac{E_0}{E_1}\,7\sqrt{k}\,N^{161/120}N^{-6r/N}.
\]
Then
\[
 A(r_0)<\frac1{29}.
\]
Moreover, $A(r)\eta^{-r}$ decreases with $r$, and
\[
 A(r_0)\eta^{-r_0}<\frac3{29}<1.
\]
\end{lemma}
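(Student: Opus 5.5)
The plan is to bound the two factors of $A(r)$ separately and then check one numerical endpoint inequality; the main point is how to use $N\ge8k$ and $k\ge120$. For the first factor, Lemma~\ref{lem:linear-total-star-ratio} gives $E_0/E_1<N^2/(2k)$. Hence
\[
 A(r)<\frac{7}{2\sqrt{k}}\,N^{2+161/120-6r/N}.
\]
Next we insert $r_0=\lfloor 7b/10\rfloor\ge 7(N-k)/10-1$. Then
\[
 \frac{6r_0}{N}\ge\frac{21}{5}\Bigl(1-\frac kN\Bigr)-\frac6N\ge\frac{21}{5}\cdot\frac78-\frac6N=\frac{147}{40}-\frac6N .
\]
The exponent of $N$ is therefore at most
\[
 2+\frac{161}{120}-\frac{147}{40}+\frac6N=-\frac{40}{120}+\frac6N=-\frac13+\frac6N .
\]
Since $N\ge960$, the factor $N^{6/N}$ is at most $960^{6/960}<1.05$. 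This yields
\[
 A(r_0)<3.7\,k^{-1/2}N^{-1/3}\le 3.7\cdot 120^{-1/2}\cdot960^{-1/3}\approx0.034 .
\]
I expect this to fall just below $1/29\approx0.0345$. This is the tight step. If the crude constants do not close, I would keep $E_0/E_1\le N(N-1)/(2k)$ exactly, use the sharper $r_0\ge 7b/10-9/10$, and verify the rational inequality at the worst case $k=120$, $N=960$. One also checks that the bound decreases in both $N$ and $k$, since the exponent stays negative once $N^{6/N}$ is controlled by its decreasing behaviour for $N\ge e$.

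For monotonicity, write
\[
 \log\bigl(A(r)\eta^{-r}\bigr)=\text{const}-r\Bigl(\frac{6\log N}{N}+\log\eta\Bigr).
\]
It suffices to show $\frac{6\log N}{N}>-\log\eta$. Using $-\log\eta=\log\bigl(1+\frac1{N-3k+1}\bigr)\le\frac1{N-3k+1}$, it is enough to have $6\log N\,(N-3k+1)>N$. This holds because $N-3k+1\ge 5N/8$ and $6\log N\cdot\frac58>1$ for $N\ge960$.

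Finally, by monotonicity it remains to bound the endpoint, namely $\eta^{-r_0}A(r_0)$. We have
\[
 \eta^{-r_0}\le\exp\Bigl(\frac{r_0}{N-3k+1}\Bigr)\le\exp\Bigl(\frac{7(N-k)/10}{N-3k}\Bigr).
\]
With $x=N/k\ge8$, the exponent is $\frac{7}{10}\cdot\frac{x-1}{x-3}\le\frac{7}{10}\cdot\frac75=0.98$. So $\eta^{-r_0}\le e^{0.98}<2.67$. We cannot reuse the already-rounded bound $1/29$ here, since that would only give $2.67/29$; instead we need the raw estimate $A(r_0)\lesssim0.034$ from above, which gives $A(r_0)\eta^{-r_0}\lesssim0.091<3/29\approx0.1034$. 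The final inequality $3/29<1$ is trivial. Of the steps, the numerical closing of $A(r_0)<1/29$ is the obstacle, and I would handle it by evaluating at the extremal point $(k,N)=(120,960)$ after establishing monotonicity in $N$ and $k$ of the explicit upper bound.
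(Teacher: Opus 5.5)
Your proposal is correct and follows essentially the same route as the paper. The shared steps are the bound $E_0/E_1<N^2/(2k)$, the estimate $r_0\ge 49N/80-1$ giving exponent $-\tfrac13+\tfrac6N$, the comparison $6\log N/N>\log\bigl(1+1/(N-3k+1)\bigr)$ for monotonicity, and $r_0/(N-3k+1)\le 49/50$ at the endpoint. Your evaluation at $(k,N)=(120,960)$ does close; the paper makes it exact via $N^{1/3}\ge 2k^{1/3}$ and $120^{5/6}>160/3$, giving $A(r_0)<441/12800<1/29$.

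One correction: you \emph{can} reuse the rounded bound $A(r_0)<1/29$ in the last step. The inequality $e^{49/50}/29<3/29$ follows from $e<3$, which is exactly how the paper finishes, so no raw estimate is needed there.
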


\begin{proof}
Lemma~\ref{lem:linear-total-star-ratio} and
$r_0\ge7b/10-1\ge49N/80-1$ give
\begin{align*}
 A(r_0)
 &<\frac7{2\sqrt{k}}N^{401/120-6r_0/N}\\
 &\le\frac7{2\sqrt{k}}N^{-1/3}N^{6/N}\\
 &<\frac{147}{80}k^{-5/6}.
\end{align*}
By~\eqref{eq:linear-theta-rational} and
\eqref{eq:linear-geometric-exp},
\[
 N^{6/N}=e^\theta<\frac{160}{153}<\frac{21}{20},
\]
while $N^{1/3}\ge2k^{1/3}$.  Also $(9/4)^6>120$, so
$120^{5/6}>160/3$.  Since $k\ge120$,
\[
 A(r_0)
 <\frac{147}{80}\frac3{160}
 =\frac{441}{12800}<\frac1{29}.
\]

For monotonicity, put $D=N-3k+1$, so $\eta=D/(D+1)$.  Since
$D>5N/8$ and $N\ge960>e$,
\[
 \theta=\frac{6\log N}{N}>
 \frac6N>\frac8{5N}>\frac1D>
 \log\left(1+\frac1D\right).
\]
Thus the factor
\[
 N^{-6r/N}\eta^{-r}
 =\exp\left[-r\left(\theta-
       \log\left(1+\frac1D\right)\right)\right]
\]
decreases with $r$.  Finally,
\[
 \frac{r_0}{D}
 \le\frac{7(N-k)}{10(N-3k+1)}
 <\frac{7(N-k)}{10(N-3k)}\le\frac{49}{50}.
\]
Consequently,
\[
 A(r_0)\eta^{-r_0}
 <\frac1{29}e^{r_0/D}
 \le\frac{e^{49/50}}{29}<\frac3{29},
\]
where Lemma~\ref{lem:appendix-exp-bounds} gives $e<3$.
\end{proof}

\subsection{The explicit series estimate for the general EKR bound}

\begin{lemma}
\label{lem:appendix-Phi-bound}
Let $k>t\ge1$.  If $b\ge c_t(m_k+1)$, then
\[
 \sum_{i=t+1}^k \sbinom{i}{t}
       \left(\frac{r_i}{b}\right)^{i-t}\le\frac6{11},
 \qquad
 \frac{m_k}{b}<\frac1{36}.
\]
\end{lemma}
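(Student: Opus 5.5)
The plan is to bound both sums termwise, using the crude polynomial estimate \eqref{eq:subflat-polynomial-bound}, $\sbinom{i}{t}\le(i+1)^{2t}$, together with $r_i\le Q:=m_k+1$. The second claim is immediate: $c_t\ge2\cdot3^2\cdot2=36$ already for $t=1$, and $c_t$ grows with $t$. Hence $b\ge36Q>36m_k$, which gives $m_k/b<1/36$.

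For the main sum, set $x:=Q/b\le1/c_t$ and $j:=i-t\ge1$. Then
\[
 \sum_{i=t+1}^k\sbinom{i}{t}\Bigl(\frac{r_i}{b}\Bigr)^{i-t}
 \le\sum_{j\ge1}(j+t+1)^{2t}x^j .
\]
I would split the polynomial factor as $(j+t+1)^{2t}\le\bigl((t+2)j\bigr)^{2t}=(t+2)^{2t}j^{2t}$, which is valid because $j+t+1\le(t+2)j$ for $j\ge1$. This leaves $(t+2)^{2t}\sum_{j\ge1}j^{2t}x^j$.

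The key step is to bound $\sum_{j\ge1}j^{2t}x^j$ by a multiple of $(2t)!\,x$. Write $j^{2t}\le(2t)!\binom{j+2t-1}{2t}$. This follows from $j^{2t}\le j(j+1)\cdots(j+2t-1)$, since each factor is at least $j$. The binomial series then gives
\[
 \sum_{j\ge1}\binom{j+2t-1}{2t}x^j=\frac{x}{(1-x)^{2t+1}}.
\]
So the whole sum is at most $(t+2)^{2t}(2t)!\,x(1-x)^{-(2t+1)}$. Using $x\le 1/c_t$ and $c_t=2(t+2)^{2t}(2t)!$, this is at most $\tfrac12(1-x)^{-(2t+1)}$.

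The main obstacle is the last factor. We need $(1-x)^{-(2t+1)}\le 12/11$, that is, $(1-x)^{2t+1}\ge11/12$. Bernoulli's inequality gives $(1-x)^{2t+1}\ge1-(2t+1)x$, so it suffices to have $(2t+1)x\le1/12$. Since $x\le1/c_t$, this reduces to $c_t\ge12(2t+1)$. For $t=1$ we have $c_1=36=12\cdot3$, so the inequality holds with equality. For $t\ge2$, $c_t$ grows superexponentially in $t$ while $12(2t+1)$ is linear, so it holds easily. Putting the pieces together gives the bound $\tfrac12\cdot\tfrac{12}{11}=\tfrac6{11}$.

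If the split $(j+t+1)\le(t+2)j$ or the inequality $j^{2t}\le(2t)!\binom{j+2t-1}{2t}$ turned out too lossy at the boundary case $t=1$, I would treat the first term $j=1$ separately. Its exact value is $\sbinom{t+1}{t}\le(t+2)^{2t}$, which leaves ample room, and I would apply the series bound only for $j\ge2$.
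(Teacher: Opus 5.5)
Your proposal is correct and follows the paper's proof essentially step for step. It uses the same ingredients: $\sbinom{i}{t}\le(i+1)^{2t}$, $r_i\le m_k+1$, $t+j+1\le(t+2)j$, $j^{2t}\le(2t)!\binom{j+2t-1}{2t}$ with the negative binomial series, Bernoulli's inequality with the tight check $c_1=36=12\cdot3$, and $c_t\ge36$ for the second claim. The fallback in your last paragraph is therefore unnecessary, since the $t=1$ case already closes with equality.
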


\begin{proof}
Put $Q=m_k+1$ and write $i=t+j$.  Since $r_i\le Q$ and
\eqref{eq:subflat-polynomial-bound} gives $\sbinom{i}{t}\le(i+1)^{2t}$, while
$i+1=t+j+1\le(t+2)j$ for $j\ge1$, we have
\[
 \sum_{i=t+1}^k \sbinom{i}{t}
       \left(\frac{r_i}{b}\right)^{i-t}
 \le(t+2)^{2t}\sum_{j\ge1}\frac{j^{2t}}{c_t^j}.
\]
For every integer $p\ge1$,
\[
 j^p\le p!\binom{j+p-1}{p},
\]
and the binomial series therefore gives
\[
 \sum_{j\ge1}\frac{j^{2t}}{c_t^j}
 \le\frac{(2t)!}{c_t}
       \left(1-\frac1{c_t}\right)^{-(2t+1)}.
\]
Bernoulli's inequality yields
\[
 \left(1-\frac1{c_t}\right)^{-(2t+1)}
 \le\left(1-\frac{2t+1}{c_t}\right)^{-1}.
\]
Since
\[
 \frac{2t+1}{(t+2)^{2t}(2t)!}\le\frac16
 \qquad\text{and}\qquad
 c_t=2(t+2)^{2t}(2t)!,
\]
the preceding estimates give
\[
 \sum_{i=t+1}^k \sbinom{i}{t}
       \left(\frac{r_i}{b}\right)^{i-t}
 \le\frac12\left(1-\frac1{12}\right)^{-1}
 =\frac6{11}.
\]
Finally, $b\ge c_t(m_k+1)>c_t m_k$, and
$(t+2)^{2t}(2t)!\ge18$, so $c_t\ge36$.  Hence
\[
 \frac{m_k}{b}<\frac1{c_t}\le\frac1{36}.
\]
\end{proof}

\end{document}